\newcommand \reg{\operatorname{reg}}
\newcommand \Tor{\operatorname{Tor}}
\newcommand \pd{\operatorname{pd}}
\newcommand \iv{\operatorname{iv}}
\newcommand \ass{\operatorname{Ass}}
\newcommand \depth{\operatorname{depth}}
\newcommand \K{\mathbb{K}}
\newcommand{\T}{\operatorname{Tor}}
\newcommand{\ext}{\operatorname{Ext}}
\newtheorem{theorem}{Theorem}[section]
\newtheorem{definition}[theorem]{Definition}
\newtheorem{lemma}[theorem]{Lemma}
\newtheorem{proposition}[theorem]{Proposition}
\newtheorem{question}[theorem]{Question}
\newtheorem{corollary}[theorem]{Corollary}
\newtheorem{notation}[theorem]{Notation}
\newtheorem{remark}[theorem]{Remark}
\begin{document}
\title[Binomial Edge Ideals of Unicyclic Graphs]{Binomial Edge Ideals of Unicyclic Graphs}
\author[Rajib Sarkar]{Rajib Sarkar}
\email{rajib.sarkar63@gmail.com}
\address{Department of Mathematics, Indian Institute of Technology Madras, \newline \indent Chennai - 600036, India}

\begin{abstract}
Let $G$ be a connected graph on the vertex set $[n]$. Then $\depth(S/J_G)\leq n+1$. In this article, we prove that if $G$ is a unicyclic graph, then the depth of $S/J_G$ is bounded below by $n$. Also, we characterize $G$ with $\depth(S/J_G)=n$ and $\depth(S/J_G)=n+1$. We then compute one of the distinguished extremal Betti numbers of $S/J_G$. If $G$ is obtained by attaching whiskers at some vertices of the cycle of length $k$, then we show that $k-1\leq \reg(S/J_G)\leq k+1$. Furthermore, we characterize $G$ with $\reg(S/J_G)=k-1$, $\reg(S/J_G)=k$ and $\reg(S/J_G)=k+1$. In each of these cases, we classify the uniqueness of the extremal Betti number of these graphs.
\end{abstract}
\keywords{Binomial edge ideal, Depth, Extremal Betti number, Castelnuovo-Mumford regularity}
\thanks{AMS Subject Classification (2010): 13D02,13C13, 05E40}
\maketitle
\section{Introduction}
Let $R = \K[x_1,\ldots,x_m]$ be the standard graded polynomial ring over an arbitrary
field $\K$ and $M$ be a finitely generated graded  $R$-module. 
Let
\[
0 \longrightarrow \bigoplus_{j \in \mathbb{Z}} R(-p-j)^{\beta_{p,p+j}(M)} 
{\longrightarrow} \cdots {\longrightarrow} \bigoplus_{j \in \mathbb{Z}} R(-j)^{\beta_{0,j}(M)} 
{\longrightarrow} M\longrightarrow 0,
\]
be the minimal graded free resolution of $M$. The number $\beta_{i,j}(M)$ is called the 
$(i,j)$-th \textit{graded Betti number} of $M$. From the minimal free resolution of a graded module, one can obtain two important invariants, namely the projective dimension and the Castelnuovo-Mumford regularity. The 
\textit{projective dimension} of $M$, denoted by $\pd(M)$, is defined as \[\pd(M):=\max\{i : \beta_{i,i+j}(M) \neq 0 \text{ for some } j\}\]
and the \textit{Castelnuovo-Mumford regularity} (or simply, \textit{regularity}) of $M$, denoted by $\reg(M)$, is defined as 
\[\reg(M):=\max \{j : \beta_{i,i+j}(M) \neq 0 \text{ for some } i\}. \]
If $\beta_{i,i+j}(M)\neq 0$ and for all pairs $(k,l)\neq (i,j)$ with $k\geq i$ and $l\geq j$, $\beta_{k,k+l}(M)=0$, then $\beta_{i,i+j}(M)$ is called an \textit{extremal Betti number} of $M$. If $p =\pd(M)$ and $r=\reg(M)$, then there exist unique numbers $i$ and $j$ such that $\beta_{p,p+i}(M)$ and $\beta_{j,j+r}(M)$ are extremal Betti numbers. These extremal Betti numbers are called the \textit{distinguished extremal Betti numbers} of $M$, see \cite{her2}. Note that $M$ admits a unique extremal Betti number if and only if 
$\beta_{p,p+r}(M)\neq 0$.

Let $G$ be a  simple graph on the vertex set $V(G) =[n]:= \{1, \ldots, n\}$ and the edge set $E(G)$. Let $S=
\K[x_1, \ldots, x_{n}, y_1, \ldots, y_{n}]$ be the polynomial ring on $2n$ variables over an arbitrary field $\K$. The \textit{binomial edge ideal} of $G$, denoted by $J_G$, defined as $J_G=(
x_i y_j - x_j y_i ~ : i < j \text{ and } \{i,j\}\in E(G)) \subseteq S$ 
was introduced by Herzog et al. in \cite{HH1} and independently by 
Ohtani in \cite{oh}. In the recent past, 
there has been considerable interest in computing algebraic invariants such as depth and regularity of $J_G$ in terms of combinatorial invariants such as clique number, number of vertices, length of a longest induced path and number of internal vertices of $G$, see \cite{Montaner,Arindam,dav,ERT20,KM-JCTA,Rinaldo-JAA,MM,MKM-Conj,MKM-Chordal,KM-EJC} for a partial list.

It is well known (see, for example \cite[Proposition 1.2.13]{bh}) that $\depth(S/J_G)\leq \dim(S/P)$ for all $P\in \ass(J_G)$. It follows from \cite[Theorem 3.2]{HH1} that $P=P_{\emptyset}(G)\in \ass(J_G)$ with $\dim(S/P)=n+1$. Therefore, $\depth(S/J_G)\leq n+1$ for every connected graph $G$ on $n$ vertices. In general, there is no lower bound for the depth of $S/J_G$. In \cite[Theorem 4.5]{ZafarBMS}, Zafar proved that $\depth(S/J_G)=n$ where $G$ is a cycle on $n$ vertices. If $G=G_1*G_2$, the join product of $G_1$ and $G_2$, then Kumar and the present author gave a formula for the depth of $S/J_G$ in terms of the depths of $S_{G_1}/J_{G_1}$ and $S_{G_2}/J_{G_2}$, see \cite[Theorems 4.1, 4.3 and 4.4]{AR2}. Recently, Rouzbahani Malayeri, Saeedi Madani and Kiani studied the depth of $S/J_G$ and they characterized all graphs $G$ such that $\depth(S/J_G)=4$ in \cite{MKM-JA}. Let $G$ be a connected unicyclic graph of girth $k$ on $n$ vertices with $n> k$ for $k\geq 3$. If $k=3$, then $G$ is a chordal graph with the property that any two maximal cliques intersect in at most one vertex. In \cite{her1}, Ene, Herzog and Hibi proved that $\depth(S/J_G)=n+1$ for such graphs. For $k\geq 4$, we compute the depth of $S/J_G$ in a slightly more general setting.
\vskip 2mm 
\noindent
\textbf{Theorem \ref{depth-girth}. }{\em 
Let $k\geq 3$ and $m\geq 2$. Let $G$ be the clique sum of $H=C_k\cup_e K_m$ and a forest along some vertices of $H$. Then $\depth(S/J_G)\geq n$. Let $A=\{u\in V(C_k): \text{there is a tree incident on }u\}$. If $e\cap A \neq \emptyset$ and $G[A]$ is connected with $k-2\leq |A|$, then $\depth(S/J_G)=n+1$.}
\vskip 2mm 
 Considering $m=2$, we obtain our results for unicyclic graph. Moreover, we prove that if there are trees attached to $k-2$ consecutive vertices of the cycle in $G$, then $\depth(S/J_G)=n+1$ and otherwise, $\depth(S/J_G)=n$, see Corollary \ref{depth-unicyclic}. 

In \cite{MM}, Matsuda and Murai proved that $\ell(G)\leq \reg(S/J_G)\leq n-1$, where $\ell(G)$ is the length of a longest induced path in $G$. It is evident that $\ell(G)$ is not a sharp lower bound. If $G$ is assumed to be a tree, then Chaudhry et al. \cite{CDI16} proved that $\reg(S/J_G)=\ell(G)$ if and only if $G$ is a caterpillar. An improved lower bound for trees was obtained by Jayanthan et al. in \cite{JNR1}, where they proved that $\iv(G)+1\leq \reg(S/J_G)$. In the case of $G$ being a block graph, Herzog and Rinaldo \cite{her2} generalized their result and proved that $\iv(G)+1\leq \reg(S/J_G)$ and they also characterized $G$ admitting a unique extremal Betti number.
There have been some other works as well on the computation of extremal Betti numbers of binomial edge ideals. In \cite{Alba-Hoang}, de Alba and Hoang studied extremal Betti numbers of binomial edge ideals of closed graphs, and Kumar studied extremal Betti numbers of binomial edge ideals of generalized block graphs, \cite{Arv-GBG}. Recently, Mascia and Rinaldo \cite{Rinaldo-extremal} studied extremal Betti numbers of some Cohen-Macaulay bipartite graphs. In this article, we study extremal Betti numbers of $J_G$, and as a consequence, we obtain a lower bound for the regularity of $J_G$, where $G$ is a unicyclic graph.

\vskip 2mm 
\noindent
\textbf{Corollary \ref{betti-unicyclic}.} {\em Let $G$ be a unicyclic graph of girth $k\geq 4$ with $\pd(S/J_G)=p$. If trees are attached to every vertex of the cycle in $G$, then $\beta_{p,p+\iv(G)+1}(S/J_G)$ is an extremal Betti number of $S/J_G$, and hence $\iv(G)+1\leq \reg(S/J_G)$. Otherwise, either $\beta_{p,p+\iv(G)-1}(S/J_G)$ or $\beta_{p,p+\iv(G)}(S/J_G)$ is an extremal Betti number of $S/J_G$, and hence $\iv(G)-1\leq \reg(S/J_G)$.}

\vskip 2mm 
There are only few classes of graphs for which the regularity of their binomial edge ideals are known, see \cite{EZ,JA1,KM-JA,Schenzel,Zafar}.
In the last section, we study the regularity and behavior of extremal Betti number of graphs $G$, where $G$ is obtained by attaching whiskers to some vertices of the cycle of length $k$. We first prove that the regularity of $S/J_G$ is bounded below by $k-1$ and bounded above by $k+1$. We then characterize $G$ such that $\reg(S/J_G)=k+1$, $\reg(S/J_G)=k-1$ and $\reg(S/J_G)=k$.
 \vskip 2mm
 \noindent
  \textbf{Corollary \ref{reg-corollary}.} {\em Let $G=C_k\cup (\cup_{i=1}^{k} W^{r_i}(v_i))$ for $r_i\geq 0$ and $k\geq 4$. Let $A=\{v_i\in V(C_k): r_i\geq 1\}$ and suppose that $A\neq \emptyset$. Then $k-1\leq \reg(S/J_G)\leq k+1$. Moreover,
  \begin{enumerate}[(1)]
  	\item $\reg(S/J_G)=k+1$ if and only if $A=V(C_k)$,
  	\item $\reg(S/J_G)=k-1$ if and only if $|A|=1$ or $|A|=2$ and vertices of $A$ are adjacent,
  	\item $\reg(S/J_G)=k$ if and only if $A$ contains at least two non-adjacent vertices and $A\subsetneq V(C_k)$.
  \end{enumerate}
}
Furthermore, we show that if $\reg(S/J_G)=k+1$ and $\reg(S/J_G)=k-1$, then $S/J_G$ admits a unique extremal Betti number. If $\reg(S/J_G)=k$, then $S/J_G$ does not always admit a unique extremal Betti number. In this case, we classify $G$ such that $S/J_G$ admits a unique extremal Betti number.

\vskip 2mm
\noindent
\textbf{Acknowledgment:} The author is grateful to his Ph.D. advisor, Prof. A. V. Jayanthan for his
constant support and insightful discussions. The author thanks 
University Grants Commission, Government of India, for financial assistance. The author has extensively used computer algebra software Macaulay2 \cite{m2} for computations. The author is also thankful to the referee for carefully reading the manuscript and making several suggestions that improved the exposition.

\section{Preliminaries}
 
Let $G$  be a  simple graph with the vertex set $[n]$ and edge set
$E(G)$. A graph $G$ on the vertex set $[n]$ is said to be a \textit{complete graph}, if
$\{i,j\} \in E(G)$ for all $1 \leq i < j \leq n$. We denote the complete graph on $n$ vertices by $K_n$. For $A \subseteq V(G)$, the
\textit{induced subgraph} of $G$ on the vertex set $A$, denoted by $G[A]$, is the graph such that for
$i, j \in A$, $\{i,j\} \in E(G[A])$ if and only if $ \{i,j\} \in
E(G)$.  For a vertex $v\in V(G)$, let $G \setminus v$ denote the  induced
subgraph of $G$ on the vertex set $V(G) \setminus \{v\}$. For a subset
$U\subseteq V(G)$, if the induced subgraph $G[U]$ is a complete graph then $U$ is called a \textit{clique}. A vertex $v$ of $G$ is said to be a \textit{simplicial vertex}
if $v$ belongs to only one maximal clique. If $v$ is not a simplicial vertex, then $v$ is called an \textit{internal vertex}. The number of internal vertices of $G$ is denoted by $\iv(G)$. For a vertex $v$ in $G$,
$N_G(v) := \{u \in V(G) :  \{u,v\} \in E(G)\}$ denotes the
\textit{neighborhood} of $v$ in $G$ and  $G_v$ is the graph with the vertex set
$V(G)$ and edge set $E(G_v) =E(G) \cup \{ \{u,w\}: u,w \in N_G(v)\}$ i.e., $G_v$ is obtained from $G$ by making a complete graph on $N_G(v)\cup \{v\}$ in $G$. Set $N_G[v]:=N_G(v)\cup \{v\}$.
The \textit{degree} of a vertex  $v$, denoted by $\deg_G(v)$, is
$|N_G(v)|$. A
\textit{cycle} on the vertex set $[k]$, denoted by $C_k$, is a graph with the edge set $\{i,i+1:1\leq i\leq k-1\}\cup \{1,k\}$ for $k\geq 3$. A
graph is said to be a \textit{unicyclic} graph if it contains exactly
one cycle as a subgraph. The \textit{girth} of a graph $G$ is the length of a shortest
cycle in $G$. A graph $G$ is called \textit{chordal} if every induced cycle of $G$ has $3$ vertices. A connected graph is a \textit{tree} if it does not have a cycle. A \textit{forest} is a disconnected graph whose components are trees. A vertex $v\in V(G)$ is said to be a \textit{cut vertex} if $G\setminus v$ has more components than $G$. A \textit{block} of a graph is a maximal nontrivial connected subgraph which has no cut vertex. 
A graph $G$ is called a \textit{block graph} if every block of $G$ is a complete graph. It is easy to see that $G$ is a block graph if and only if $G$ is a chordal graph with the property that any two maximal cliques intersect in at most one vertex.  A connected chordal graph $G$ is said to be a \textit{generalized block graph} if three maximal cliques of $G$ intersect non-trivially, then the intersection of each pair of them is the same.

For $T \subseteq V(G)$, let $c(T)$
denote the number of components of $G[\bar{T}]$, where $\bar{T} = V(G)\setminus T$. Also, let $G_1,\cdots,G_{c(T)}$ be the 
components of $G[\bar{T}]$ and for every $i$, $\tilde{G_i}$ denotes the
complete graph on the vertex set $V(G_i)$. Moreover, we set $P_T(G) := (\underset{i\in T} \cup \{x_i,y_i\}, J_{\tilde{G_1}},\cdots, J_{\tilde{G}_{c(T)}})$.
In \cite{HH1}, Herzog et al. proved that $J_G =  \underset{T \subseteq [n]}\cap
P_T(G)$, which in particular, implies that $J_G$ is a radical ideal. A set $T\subseteq V(G)$ is said to have \textit{cut point property} if for every
$i \in T$, $i$ is a cut vertex of the graph $G[\bar{T} \cup \{i\}]$ i.e., $c(T\setminus \{i\} )< c(T)$.
They also showed 
that $P_T(G)$ is a minimal prime
of $J_G$ if and only if either $T = \emptyset$ or $T \subset V(G)$ has the cut point property, see \cite[Corollary 3.9]{HH1}.

\vskip 2mm

Let $R=\K[x_1,\dots,x_m]$, $R'=\K[x_{m+1},\dots,x_n]$ and $R''=\K[x_1,\dots,x_n]$ be
polynomial rings. Let $I\subseteq R$ and $J\subseteq R'$ be homogeneous ideals. Then it is well known that the minimal free resolution of $R''/(I+J)$ is the tensor product of the minimal free resolutions of $R/I$ and $R'/J$. Therefore, we have for all $i,j$, 
\begin{align}\label{betti-product}
\beta_{i,i+j}\left(\frac{R''}{I+J}\right)= \underset{{\substack{i_1+i_2=i \\ j_1+j_2=j}}}{\sum}\beta_{i_1,i_1+j_1}\left(\frac{R}{I}\right)\beta_{i_2,i_2+j_2}\left(\frac{R'}{J}\right).
\end{align}
The following depth lemma and regularity lemma can be easily derived from the long exact sequence of $\T$ and $\ext $ corresponding to given short exact sequence.
\begin{lemma}\label{depth-lemma}
	Let $R$ be a standard graded ring  and $M,N,P$ be finitely generated graded $R$-modules. 
	If $ 0 \rightarrow M \xrightarrow{f}  N \xrightarrow{g} P \rightarrow 0$ is a 
	short exact sequence with $f,g$  
	graded homomorphisms of degree zero, then 
	\begin{enumerate}[(1)]
		\item $\depth(M) \ge \min \{\depth(N), \depth(P)+1\},$
		\item $\depth(N) \ge \min \{\depth(M), \depth(P)\},$
		\item $\depth(P) \ge \min \{\depth(M)-1, \depth(N)\},$ 
		\item $\depth(M) = \depth(P)+1,$ {\em if}  $\depth(N) > \depth(P)$ {\em and} 
		\item $\depth(M) = \depth(N),$ {\em if }  $\depth(N) < \depth(P)$.
	\end{enumerate}	
\end{lemma}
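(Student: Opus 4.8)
The five assertions all flow from one long exact sequence, so the plan is to set that up once and read off each part. For a finitely generated graded module $X$ over a standard graded ring $R$ with graded maximal ideal $\mathfrak m$ and residue field $\K=R/\mathfrak m$, recall $\depth X=\min\{i:\operatorname{Ext}^i_R(\K,X)\neq 0\}$ (equivalently, the first nonvanishing spot of local cohomology $H^\bullet_{\mathfrak m}(X)$; and over a polynomial ring also $\dim R-\pd X$, the form the statement alludes to, via $\pd X=\max\{i:\Tor^R_i(X,\K)\neq 0\}$), with the convention $\depth 0=\infty$. I would apply the cohomological $\delta$-functor $\operatorname{Ext}^\bullet_R(\K,-)$ to $0\to M\to N\to P\to 0$ to obtain
\[
\cdots\to\operatorname{Ext}^{i-1}_R(\K,P)\to\operatorname{Ext}^{i}_R(\K,M)\to\operatorname{Ext}^{i}_R(\K,N)\to\operatorname{Ext}^{i}_R(\K,P)\xrightarrow{\ \partial\ }\operatorname{Ext}^{i+1}_R(\K,M)\to\cdots,
\]
and abbreviate $a=\depth M$, $b=\depth N$, $c=\depth P$. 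Running the argument with $H^\bullet_{\mathfrak m}(-)$ instead, or with the $\Tor^R_\bullet(-,\K)$ long exact sequence together with the Auslander--Buchsbaum formula when $R$ is regular (which is the route the paper points to), is identical; the degenerate cases $M=0$ or $P=0$ are immediate from the convention.

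Parts (1)--(3) are pure vanishing statements, each obtained by choosing the three-term exact segment whose middle entry is $\operatorname{Ext}^i(\K,-)$ of the module in question and noting that both outer entries vanish for $i$ below the claimed bound. For (2), use $\operatorname{Ext}^i(\K,M)\to\operatorname{Ext}^i(\K,N)\to\operatorname{Ext}^i(\K,P)$: both ends vanish for $i<\min\{a,c\}$, so $b\geq\min\{a,c\}$. For (1), use $\operatorname{Ext}^{i-1}(\K,P)\to\operatorname{Ext}^{i}(\K,M)\to\operatorname{Ext}^{i}(\K,N)$: both ends vanish for $i<\min\{c+1,b\}$, so $a\geq\min\{b,c+1\}$. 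For (3), use $\operatorname{Ext}^i(\K,N)\to\operatorname{Ext}^i(\K,P)\to\operatorname{Ext}^{i+1}(\K,M)$: both ends vanish for $i<\min\{b,a-1\}$, so $c\geq\min\{b,a-1\}$.

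For the sharp equalities (4) and (5) I would additionally exploit that the surviving third entry at the critical degree forces a neighbor to be nonzero. For (4), assume $b>c$; then (1) already gives $a\geq c+1$, and in $\operatorname{Ext}^{c}(\K,N)\to\operatorname{Ext}^{c}(\K,P)\xrightarrow{\partial}\operatorname{Ext}^{c+1}(\K,M)$ the first term is $0$ (since $c<b$), so $\partial$ is injective; as $\operatorname{Ext}^{c}(\K,P)\neq0$ by definition of $c$, this forces $\operatorname{Ext}^{c+1}(\K,M)\neq0$ and hence $a=c+1$. For (5), assume $b<c$; for $i<b$ the segment $\operatorname{Ext}^{i-1}(\K,P)\to\operatorname{Ext}^{i}(\K,M)\to\operatorname{Ext}^{i}(\K,N)$ has both ends $0$ (the first since $i-1<b-1<c$, the second since $i<b$), giving $a\geq b$; and in $\operatorname{Ext}^{b}(\K,M)\to\operatorname{Ext}^{b}(\K,N)\to\operatorname{Ext}^{b}(\K,P)$ the last term is $0$ (since $b<c$), so the first map surjects onto the nonzero module $\operatorname{Ext}^{b}(\K,N)$, forcing $\operatorname{Ext}^{b}(\K,M)\neq0$ and hence $a=b$.

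There is no genuine obstacle here — it is a routine diagram chase. The only point requiring care is in (4) and (5): one must invoke the hypothesis relating $b$ and $c$ at exactly the cohomological degree that makes the boundary map injective (for (4)) or the comparison map surjective (for (5)), and keep the convention $\depth 0=\infty$ in mind so that the various $\min$'s remain meaningful in edge cases. I would present the parts in the order (1)--(3) then (4)--(5), since the last two are refinements of the first three along the same sequence.
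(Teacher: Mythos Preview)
Your proof is correct and follows essentially the approach the paper indicates: the paper simply remarks that the lemma ``can be easily derived from the long exact sequence of $\Tor$ corresponding to given short exact sequence'' and gives no further details, while you carry out exactly this derivation (phrased via $\operatorname{Ext}^\bullet_R(\K,-)$, which you note is equivalent to the $\Tor$/Auslander--Buchsbaum route the paper alludes to).
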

\begin{lemma}\label{regularity-lemma}
	Let $R$ be a standard graded ring  and $M,N,P$ be finitely generated graded $R$-modules. 
	If $ 0 \rightarrow M \xrightarrow{f}  N \xrightarrow{g} P \rightarrow 0$ is a 
	short exact sequence with $f,g$  
	graded homomorphisms of degree zero, then 
	\begin{enumerate}[(1)]
		\item $\reg(M)\leq \max \{\reg(N),\reg(P)+1 \}$,
		\item $\reg (N) \leq \max \{\reg (M), \reg (P)\}$,
		\item $\reg(P)\leq \max \{\reg(M)-1,\reg(N)\}$,
		\item $\reg (M) = \reg (P)+1,$ {\em if} $\reg (N) < \reg (M)$ {\em and}
		\item $\reg(M)=\reg(N),$ {\em if} $\reg(N)> \reg(P)$.
	\end{enumerate}	
\end{lemma}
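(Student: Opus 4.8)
The plan is to obtain all five statements by reading off the long exact sequence in $\Tor^R(-,\K)$ induced by the short exact sequence $0 \to M \to N \to P \to 0$, where $\mathfrak{m}$ denotes the graded maximal ideal of $R$ and $\K = R/\mathfrak{m}$ its residue field. The one fact I would recall at the outset is that for any finitely generated graded $R$-module $X$,
\[
\reg(X) = \max\{\, j \ : \ \Tor_i^R(X,\K)_{i+j} \neq 0 \text{ for some } i \ge 0 \,\}.
\]
Applying $\Tor^R(-,\K)$ to the short exact sequence and extracting the piece in internal degree $i+j$ gives the exact row
\[
\cdots \to \Tor_{i+1}^R(P,\K)_{i+j} \to \Tor_i^R(M,\K)_{i+j} \to \Tor_i^R(N,\K)_{i+j} \to \Tor_i^R(P,\K)_{i+j} \to \Tor_{i-1}^R(M,\K)_{i+j} \to \cdots ,
\]
and the bookkeeping point I would isolate first is the re-indexing $\Tor_{i+1}^R(P,\K)_{i+j} = \Tor_{i+1}^R(P,\K)_{(i+1)+(j-1)}$, which witnesses $\reg(P)$ in row $j-1$, and $\Tor_{i-1}^R(M,\K)_{i+j} = \Tor_{i-1}^R(M,\K)_{(i-1)+(j+1)}$, which witnesses $\reg(M)$ in row $j+1$; this is where the $\pm 1$'s in the statement come from.

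Next I would prove (1)--(3) by one uniform argument: pick a homological degree at which the relevant module has a nonzero $\Tor$ in its top row, and use exactness of the above sequence to force a nonzero $\Tor$ at one of the two adjacent spots. For (1), from $\Tor_i^R(M,\K)_{i+\reg(M)} \neq 0$ exactness gives either $\Tor_i^R(N,\K)_{i+\reg(M)} \neq 0$, so $\reg(N) \ge \reg(M)$, or $\Tor_{i+1}^R(P,\K)_{i+\reg(M)} \neq 0$, so $\reg(P)+1 \ge \reg(M)$; hence $\reg(M) \le \max\{\reg(N),\reg(P)+1\}$. Statement (2) is identical, starting from a top-row $\Tor$ of $N$ with neighbours $\Tor_i^R(M,\K)$ and $\Tor_i^R(P,\K)$, and (3) starts from a top-row $\Tor$ of $P$ with neighbours $\Tor_i^R(N,\K)$ and $\Tor_{i-1}^R(M,\K)$. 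Then (4) is a formal consequence of (1) and (3): if $\reg(N) < \reg(M)$, then (1) yields $\reg(P) \ge \reg(M)-1$ while $\reg(N) \le \reg(M)-1$ combined with (3) yields $\reg(P) \le \reg(M)-1$, so $\reg(M) = \reg(P)+1$; and (5) is the symmetric consequence of (1) and (2), since $\reg(N) > \reg(P)$ collapses the maximum in (1) to $\reg(N)$ and forces $\reg(M) \ge \reg(N)$ via (2).

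Since the whole argument is a single pass through an exact sequence, there is no genuine obstacle --- indeed the paper already flags these as routine --- and the only step requiring attention is the grading-shift bookkeeping noted above, which I would write out once and reuse. If one prefers to avoid $\Tor$ entirely, the identical argument runs with the local cohomology description $\reg(X) = \max\{\, n+i \ : \ H^i_{\mathfrak{m}}(X)_n \neq 0 \,\}$ and the long exact sequence in local cohomology, trading the homological shift for a cohomological one.
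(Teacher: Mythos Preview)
Your proposal is correct and follows exactly the approach indicated in the paper: the paper does not give a proof at all, merely remarking that the lemma ``can be easily derived from the long exact sequence of $\Tor$ corresponding to given short exact sequence,'' and your argument is precisely that derivation, carried out in full. There is nothing to add.
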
	
The following is a crucial lemma due to Ohtani, which is used repeatedly throughout this article.
\begin{lemma}$($\cite[Lemma 4.8]{oh}$)$\label{ohtani-lemma}
	Let $G$ be a  graph on $V(G)$ and $v\in V(G)$ such that $v$ is not a simplicial vertex. Then $J_{G}=(J_{G\setminus v}+(x_v,y_v))\cap J_{G_v}$.
\end{lemma}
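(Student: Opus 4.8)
The plan is to prove the asserted equality by establishing the two inclusions separately, the essential tool being the decomposition $J_G=\bigcap_{T\subseteq[n]}P_T(G)$ recorded above. Throughout, write $I_1=J_{G\setminus v}+(x_v,y_v)$ and $I_2=J_{G_v}$. The inclusion $J_G\subseteq I_1\cap I_2$ I would verify generator by generator. Since $E(G)\subseteq E(G_v)$, every generator $x_iy_j-x_jy_i$ of $J_G$ is already a generator of $J_{G_v}$, so $J_G\subseteq I_2$. For $J_G\subseteq I_1$, a generator coming from an edge $\{i,j\}$ with $v\notin\{i,j\}$ lies in $J_{G\setminus v}$ (because $G\setminus v$ is the induced subgraph of $G$ on $[n]\setminus\{v\}$, so this edge survives), while a generator coming from an edge containing $v$ lies visibly in $(x_v,y_v)$; hence $J_G\subseteq I_1$.

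For the reverse inclusion $I_1\cap I_2\subseteq J_G$, the strategy is to fix $h\in I_1\cap I_2$ and show $h\in P_T(G)$ for every $T\subseteq[n]$, which yields $h\in\bigcap_T P_T(G)=J_G$. I would split into two cases according to whether $v\in T$. If $v\in T$, I use the factor $h\in I_1$: by definition of $P_T(G)$ we have $(x_v,y_v)\subseteq P_T(G)$, and since $G\setminus v$ is an induced subgraph of $G$ we have $J_{G\setminus v}\subseteq J_G\subseteq P_T(G)$; therefore $I_1\subseteq P_T(G)$ and in particular $h\in P_T(G)$. The case $v\notin T$ uses the other factor $h\in I_2=J_{G_v}$, and there the work is to show $J_{G_v}\subseteq P_T(G)$.

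The main point, and the step I expect to be the only real obstacle, is precisely this verification $J_{G_v}\subseteq P_T(G)$ for $v\notin T$. The generators of $J_{G_v}$ that are not already generators of $J_G$ are exactly the binomials $x_uy_w-x_wy_u$ with $u,w\in N_G(v)$, so these are what must be placed in $P_T(G)$. If one of $u,w$ belongs to $T$, the binomial lies in $P_T(G)$ at once, since both of its terms carry a variable indexed by that vertex of $T$. If instead $u,w\in\bar{T}$, the key observation is that, because $v\notin T$, the vertex $v$ lies in $\bar{T}$ and the edges $\{v,u\},\{v,w\}\in E(G)$ persist in $G[\bar{T}]$; hence $v,u,w$ all lie in a single connected component $G_\ell$ of $G[\bar{T}]$, and so $x_uy_w-x_wy_u\in J_{\tilde{G_\ell}}\subseteq P_T(G)$ because $\tilde{G_\ell}$ is the complete graph on $V(G_\ell)$. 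This disposes of every new generator, giving $J_{G_v}\subseteq P_T(G)$ and thus $h\in P_T(G)$. Combining the two cases shows $h\in\bigcap_{T\subseteq[n]}P_T(G)=J_G$, which completes the reverse inclusion and the proof. (I note that the non-simplicial hypothesis on $v$ is used only to guarantee $G_v\neq G$, so that the decomposition is genuinely nontrivial; the inclusions above are valid for any $v$.)
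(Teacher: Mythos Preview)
Your argument is correct. The paper does not supply a proof of this lemma; it is quoted from Ohtani's paper \cite{oh} and used as a black box. Your proof via the primary decomposition $J_G=\bigcap_T P_T(G)$ is a clean, self-contained verification: the forward inclusion is immediate on generators, and for the reverse you correctly split on whether $v\in T$, the key observation being that when $v\notin T$ any two neighbours $u,w\in N_G(v)\cap\bar T$ lie in the same component of $G[\bar T]$ via $v$, forcing $x_uy_w-x_wy_u\in J_{\tilde G_\ell}$. Your closing remark that the equality holds for arbitrary $v$ (with the non-simplicial hypothesis only ensuring $G_v\neq G$) is also accurate.
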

Thus, we get the following short exact sequence:
      \begin{align}\label{ohtani-ses}
    0\longrightarrow \frac{S}{J_{G}}\longrightarrow 
    \frac{S}{(x_v,y_v)+J_{G \setminus v}} \oplus \frac{S}{J_{{G_v}}}\longrightarrow \frac{S}{(x_v,y_v)+J_{G_v \setminus v}} \longrightarrow 0,
     \end{align}
  and correspondingly the  long exact sequence of Tor modules:
       \begin{multline}\label{ohtani-tor}
\cdots \longrightarrow \Tor_{i}^{S}\left( \frac{S}{J_G},\K\right)_{i+j}\longrightarrow \Tor_{i}^{S}\left( \frac{S}{(x_v,y_v)+J_{G\setminus v}},\K\right)_{i+j} 
\oplus \Tor_{i}^{S}\left(\frac{S}{J_{G_v}},\K\right)_{i+j} \\ \longrightarrow \Tor_{i}^{S}\left(\frac{S}{(x_v,y_v)+J_{G_v\setminus v}},\K\right)_{i+j} \longrightarrow \Tor_{i-1}^{S}\left( \frac{S}{J_G},\K\right)_{i+j}\longrightarrow \cdots
     \end{multline}

\section{Unicyclic Graph}
Let $G$ be a connected unicyclic graph (which is not a cycle) of girth $k$ on $n$ vertices for $k\geq 4$. In this section, we prove that $n\leq \depth(S/J_G)$ and we characterize unicyclic graphs $G$ such that $\depth(S/J_G)=n$. We also compute one distinguished extremal Betti number of $S/J_G$.
\begin{notation}{\em
		Let $G$ be a graph on $V(G)=[n]$. We reserve the notation $S$ for the
		polynomial ring $\K[x_i, y_i : i \in [n]]$ and for $v\in V(G)$, $S'$ for the polynomial ring $\K[x_i,y_i : i\in V(G)\setminus \{v\}]$.  If $H$ is any other graph with the vertex set $V(H)$, then we set
		$S_H = \K[x_i, y_i : i \in V(H)]$ and for $v\in V(H)$, set $S_H'=\K[x_i,y_i : i\in V(H)\setminus \{v\}]$.
		}
\end{notation}
To re-emphasize: Unless stated otherwise, $G$ always denotes a graph on $n$ vertices.
\begin{definition}
	Let $G_1$ and $G_2$ be two subgraphs of a graph $G$. If $G_1\cap
	G_2=K_m$, $V(G_1)\cup V(G_2)=V(G)$ and $E(G_1)\cup E(G_2)=E(G)$ with $G_1\neq K_m$ then $G$ is called the clique sum of $G_1$ and $G_2$
	along the complete graph $K_m$, denoted by $G_1\cup_{K_m} G_2$. Sometimes we call this clique sum that $G_1$ is attached to $G_2$ along $K_m$. If $G_2=K_m$, then $G_1\cup_{K_m} G_2=G_1$.
\end{definition}
\begin{notation}
	Let $k\geq 3$. For the rest of the article, we fix the following notation for the cycle graph on $k$-vertices. Let $V(C_k)=\{v=v_1,v_2,\dots,v_k\}$ be such that $E(C_k)=\{\{v_i,v_{i+1}\},\{v_1,v_k\}:1\leq i\leq k-1 \}$.
\end{notation}
 Let $H$ be the clique sum of $C_k$ and a complete graph $K_m$ along an edge $e$ for $m\geq 2$. If $m=2$, then $H=C_k$ and in this case, Zafar and Zahid \cite[Corollary 16]{Zafar} proved that $\reg(S_H/J_H)=k-2$ and $\beta_{k,2k-2}(S_H/J_H)$ is the unique extremal Betti number of $S_H/J_H$. For $m\geq 3$, Jayanthan et al. proved that $\reg(S_H/J_H)=k-1$, see \cite[Proposition 3.11]{JAR2}. Here, we prove that $S_H/J_H$ admits a unique extremal Betti number, namely $\beta_{n,n+k-1}(S_H/J_H)$, where $|V(H)|=n$.
\begin{proposition}\label{betti-cliquesum}
	Let $k,m\geq 3$ and $H=C_k\cup_e K_m$, with $|V(H)|=n$, be the clique sum of a cycle $C_k$ and a complete graph $K_m$ along an edge $e$. Then $\depth(S_H/J_H)=n$ and $\beta_{n,n+k-1}(S_H/J_H)$ is the unique extremal Betti number of $S_H/J_H$.
\end{proposition}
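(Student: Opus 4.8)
The plan is to induct on the girth $k\ge 3$. Throughout, write $e=\{v_1,v_2\}$ so that $C_k$ has vertices $v_1,\dots,v_k$ and $K_m$ has vertices $v_1,v_2,w_3,\dots,w_m$, and apply Ohtani's Lemma (Lemma~\ref{ohtani-lemma}) at the vertex $v=v_1$, which is not simplicial since $k,m\ge 3$. The crucial structural point is the identification of the three graphs occurring in \eqref{ohtani-ses}: $H\setminus v$ is the block graph obtained by gluing $K_{m-1}$ to one endpoint of the path on $v_2,\dots,v_k$; completing $N_H(v)=\{v_2,v_k,w_3,\dots,w_m\}$ to a clique turns $\{v_1,v_2,v_k,w_3,\dots,w_m\}$ into $K_{m+1}$ and leaves the path $v_2-v_3-\cdots-v_k$, so that $H_v\cong C_{k-1}\cup_{e'}K_{m+1}$ with $e'=\{v_2,v_k\}$, a member of the same family with girth lowered by one and the same number $n$ of vertices; and $H_v\setminus v\cong C_{k-1}\cup_{e'}K_m$, again in the family, with girth $k-1$ and $n-1$ vertices. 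For the block graph $H\setminus v$ one verifies (by a further Ohtani reduction to complete graphs, or from known results on block graphs \cite{her2}) that it is Cohen--Macaulay with $\depth(S_{H\setminus v}/J_{H\setminus v})=(n-1)+1=n$, hence $\pd(S_{H\setminus v}/J_{H\setminus v})=n-2$, and $\reg(S_{H\setminus v}/J_{H\setminus v})=\iv(H\setminus v)+1=k-1$.

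For the inductive step ($k\ge 4$) I would feed these into \eqref{ohtani-ses}, writing the two right-hand terms as $B\oplus C$ and $D$ with $B=S_H/((x_v,y_v)+J_{H\setminus v})$, $C=S_H/J_{H_v}$, $D=S_H/((x_v,y_v)+J_{H_v\setminus v})$. The Künneth formula \eqref{betti-product} (tensoring with the Koszul complex on $x_v,y_v$) together with the inductive hypothesis for $H_v$ and $H_v\setminus v$ gives $\depth B=\depth C=n$ and $\depth D=n-1$; since $\depth(B\oplus C)=n>n-1$, Lemma~\ref{depth-lemma}(4) yields $\depth(S_H/J_H)=n$, hence $\pd(S_H/J_H)=n$. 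For the extremal Betti number I would run \eqref{ohtani-tor} in homological degree $n$ and internal degree $n+k-1$: one has $\pd(B\oplus C)=n$, so $\Tor^{S_H}_{n+1}(B\oplus C,\K)=0$, while $\pd D=n+1$ and, by \eqref{betti-product} and the inductive hypothesis, $\beta_{n+1,(n+1)+(k-2)}(D)=\beta_{n-1,(n-1)+(k-2)}(S_{H_v\setminus v}/J_{H_v\setminus v})\neq 0$, the latter being the unique extremal Betti number of $S_{H_v\setminus v}/J_{H_v\setminus v}$ (whose regularity is $k-2$). Exactness of \eqref{ohtani-tor} then forces an injection $\Tor^{S_H}_{n+1}(D,\K)_{n+k-1}\hookrightarrow \Tor^{S_H}_n(S_H/J_H,\K)_{n+k-1}$, so $\beta_{n,n+k-1}(S_H/J_H)\neq 0$. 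Since $\reg(S_H/J_H)=k-1$ by \cite[Proposition~3.12]{JAR2} (or from Lemma~\ref{regularity-lemma}(1) applied to \eqref{ohtani-ses}, using $\reg B=k-1$ and $\reg C=\reg D=k-2$), and $\pd(S_H/J_H)=n$, the nonvanishing of $\beta_{n,n+k-1}(S_H/J_H)=\beta_{\pd,\pd+\reg}(S_H/J_H)$ makes it the unique extremal Betti number.

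For the base case $k=3$, i.e.\ $H=K_3\cup_e K_m$, the same reduction at $v_1$ now produces only complete graphs up to a Künneth twist: $H\setminus v_1$ is $K_{m-1}$ with a pendant edge (which by one more Ohtani step reduces to $K_{m-2}$, $K_{m-1}$, $K_m$), $H_{v_1}=K_{m+1}$, and $H_{v_1}\setminus v_1=K_m$. Using the Eagon--Northcott description of the resolution of $S_{K_r}/J_{K_r}$ ($\reg=1$, $\pd=r-1$, $\beta_{r-1,r}\neq 0$), one computes through \eqref{ohtani-ses} and \eqref{ohtani-tor} that $\depth(S_H/J_H)=n$, $\reg(S_H/J_H)=2$, and $\beta_{n,n+2}(S_H/J_H)\neq 0$, so $\beta_{n,n+2}(S_H/J_H)$ is the unique extremal Betti number. (Alternatively the base case can be quoted from the literature on generalized block graphs.) The new feature is that $H_{v_1}$ and $H_{v_1}\setminus v_1$ are Cohen--Macaulay of depth one larger than in the generic case, so $\depth D=n$ rather than $n-1$ and Lemma~\ref{depth-lemma}(4) no longer applies directly; instead the value $\depth(S_H/J_H)=n$ (rather than $n+1$) must be extracted from the degree-$(n+2)$ strand of \eqref{ohtani-tor}, where $\Tor_n(D,\K)$ vanishes for regularity reasons while $\Tor_n(B,\K)$ does not.

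The hard part is precisely this base case $k=3$, together with the block-graph inputs in the first paragraph: the collapse of the clean depth-lemma argument forces a bidegree-by-bidegree chase through the Tor long exact sequence, and one must carefully supply the resolution data for binomial edge ideals of complete graphs and of $K_{m-1}$ with a pendant edge. Everything else is bookkeeping: an induction on $k$ along \eqref{ohtani-ses}, where the key observation is that $H_v$ and $H_v\setminus v$ stay in the same family with smaller girth, so the invariants propagate cleanly.
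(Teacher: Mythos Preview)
Your proof is correct and follows essentially the same route as the paper: induct on $k$, apply Ohtani's Lemma at $v=v_1$, identify $H_v\cong C_{k-1}\cup_{e'}K_{m+1}$, $H_v\setminus v\cong C_{k-1}\cup_{e'}K_m$, $H\setminus v\cong K_{m-1}\cup_{v_2}P_{k-1}$, feed the inductive and block-graph data into \eqref{ohtani-ses}/\eqref{ohtani-tor}, and read off the nonvanishing of $\beta_{n,n+k-1}$ from the injection $\Tor_{n+1}^{S_H}(D,\K)_{n+k-1}\hookrightarrow\Tor_n^{S_H}(S_H/J_H,\K)_{n+k-1}$.

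Two small differences are worth noting. First, in the inductive step you invoke Lemma~\ref{depth-lemma}(4) directly (since $\depth(B\oplus C)=n>n-1=\depth D$) to get $\depth(S_H/J_H)=n$ in one shot; the paper instead uses Lemma~\ref{depth-lemma}(1) for $\depth\ge n$ and then pulls $\depth\le n$ out of the nonvanishing of $\beta_{n,n+k-1}$ via Auslander--Buchsbaum. Your way is slightly more economical. Second, for the base case $k=3$ the paper simply observes that $C_3\cup_e K_m$ is a generalized block graph and quotes \cite{KM-CA} and \cite{Arv-GBG}; your direct Tor chase (using that $\reg D=1$ forces $\Tor_n(D,\K)_{n+2}=0$ while $\Tor_n(B,\K)_{n+2}\neq 0$ because $\iv(H\setminus v)=1$) is a valid self-contained alternative, and you correctly flag why Lemma~\ref{depth-lemma}(4) breaks down here.
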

\begin{proof}
	Let $e=\{v,v_2\}$. We proceed by induction on $k$. Suppose first that $k=3$. Then $H=C_3\cup_e K_m$, which is a generalized block graph. Thus by \cite[Theorem 3.2]{KM-CA}, $\depth(S_H/J_H)=n$, and hence it follows from \cite[Theorem 3.7]{Arv-GBG} that $\beta_{n,n+2}(S_H/J_H)$ is the unique extremal Betti number of $S_H/J_H$. 
	
	Now suppose that $k\geq 4$. By Lemma \ref{ohtani-lemma}, $J_H=J_{H_v}\cap ((x_v,y_v)+J_{H\setminus v})$, where $H_v=C_{k-1}\cup_{e'} K_{m+1}$ and $H\setminus v=P_{k-1}\cup_{v_2} K_{m-1}$ with $e'=\{v_2,v_k\}$, and $V(P_{k-1})=V(C_{k-1})=\{v_2,\dots,v_k\}$. Note that $H_v\setminus v=C_{k-1}\cup_{e'} K_{m}$. Therefore by induction, $\depth(S_H/J_{H_v})=n$, $\depth(S_H'/J_{H_v\setminus v})=n-1$ and $\beta_{n,n+k-2}(S_H/J_{H_v})$, $\beta_{n-1,n+k-3}(S_H'/J_{H_v\setminus v})$ are the unique extremal Betti numbers. Thus it follows from \eqref{betti-product} that $\beta_{n+1,n+k-1}(S_H/((x_v,y_v)+J_{H_v\setminus v}))$ is the unique extremal Betti number. Since $\iv(H\setminus v)=k-2$, it follows from \cite[Theorem 1.1]{her1} that $\depth(S_H'/J_{H\setminus v})=n$, and hence by \cite[Theorem 8]{her2} and \eqref{betti-product}, $\beta_{n,n+k-1}(S_H/((x_v,y_v)+J_{H\setminus v}))$ is the unique extremal Betti number. As $v$ is not a simplicial vertex, we apply Lemma \ref{depth-lemma} on the short exact sequence \eqref{ohtani-ses} for the pair $(H,v)$ and get that $\depth(S_H/J_H)\geq n$. 
	Considering the long exact sequence of Tor \eqref{ohtani-tor} for $i=n$ and in graded degree $j=k-1$, we get
	\[ 0\longrightarrow \Tor_{n+1}^{S_H}\left(\frac{S_H}{((x_v,y_v)+J_{H_v\setminus v})},\K\right)_{n+k-1}\longrightarrow \Tor_{n}^{S_H}\left(\frac{S_H}{J_H},\K \right)_{n+k-1}\longrightarrow \cdots
	\]
	which implies that $\beta_{n,n+k-1}(S_H/J_H)\neq 0$. Therefore by Auslander-Buchsbaum formula, $\depth(S_H/J_H)\leq n$. Hence, $\depth(S_H/J_H)=n$ and $\beta_{n,n+k-1}(S_H/J_H)$ is the unique extremal Betti number of $S_H/J_H$ as $\reg(S_H/J_H)=k-1$, by \cite[Proposition 3.11]{JAR2}.
	\end{proof}
	Let $M$ be a graded $S$-module. Then the \textit{Betti polynomial} of M is defined as $\sum_{i,j} \beta_{i,j}(M)s^it^j$ and denoted by $B_M(s,t)$.
	
	A graph $G$ is said to be a \textit{decomposable graph} if $G$ is the clique sum of subgraphs $G_1$ and $G_2$ along a simplicial vertex i.e., $G=G_1\cup_v G_2$, where $v$ is a simplicial vertex of $G_1$ and $G_2$. If $G$ is not decomposable, then it is called an \textit{indecomposable graph}. We now recall the following result due to Herzog and Rinaldo.
	\begin{proposition}\cite[Proposition 3]{her2}\label{betti-decomposition}
		Let $G=G_1\cup G_2$ be a decomposable graph. Then 
		\[ B_{S/J_G}(s,t)=B_{S_{G_1}/J_{G_1}}(s,t) B_{S_{G_2}/J_{G_2}}(s,t). \]
	\end{proposition}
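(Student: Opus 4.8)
The plan is to re‑introduce, only for the sake of $G_2$, a private second copy of the simplicial gluing vertex $v$, so that $J_{G_1}$ and the relabelled $J_{G_2}$ live in disjoint sets of variables and the asserted product becomes an instance of the tensor‑product formula \eqref{betti-product}; one then recovers $S/J_G$ by collapsing the two copies of $v$ along a length‑two regular sequence of linear forms, which does not change graded Betti numbers. Write $G=G_1\cup_v G_2$ with $V(G_1)\cap V(G_2)=\{v\}$ and $v$ simplicial in each $G_i$ (we may assume $G_1,G_2$ connected). Adjoin two variables $a,b$, set $T=S[a,b]$, keep $S_{G_1}=\K[x_i,y_i:i\in V(G_1)]\subseteq T$, and realize the coordinate ring of $G_2$ as $\K[x_j,y_j:j\in V(G_2)\setminus\{v\}][a,b]\subseteq T$ with $a,b$ standing for $x_v,y_v$; let $J_{G_2}^{\flat}$ be $J_{G_2}$ rewritten in these variables. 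These two subrings involve disjoint variables whose union is all of $T$, so $I:=J_{G_1}+J_{G_2}^{\flat}$ is a sum of ideals in disjoint variables and \eqref{betti-product} gives $B_{T/I}(s,t)=B_{S_{G_1}/J_{G_1}}(s,t)\,B_{S_{G_2}/J_{G_2}}(s,t)$. Moreover $\theta_1:=x_v-a$ and $\theta_2:=y_v-b$ are linear forms with $T/(\theta_1,\theta_2)\cong S$ and $T/(I+(\theta_1,\theta_2))\cong S/J_G$, since collapsing the two copies of the $v$‑variables carries $J_{G_1}+J_{G_2}^{\flat}$ onto $J_{G_1}+J_{G_2}=J_G$ (using that $G_1$ and $G_2$ have no common edge). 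As reduction modulo a linear nonzerodivisor preserves all graded Betti numbers, the proposition will follow once we prove that $\theta_1,\theta_2$ is a regular sequence on $T/I$.

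Put $R_1:=S_{G_1}/J_{G_1}$ and $R_2:=S_{G_2}/J_{G_2}^{\flat}$, so $T/I=R_1\otimes_\K R_2$. For $\theta_1$ this is easy: since $v$ is simplicial in $G_1$ it is never a cut vertex, so $v$ lies in no subset with the cut‑point property, hence $x_v$ occurs in none of the minimal primes $P_W(G_1)$; as $J_{G_1}$ is radical, $x_v$ is a nonzerodivisor on $R_1$, and these primes being homogeneous, $R_1$ is in fact free over $\K[x_v]$ by graded Nakayama. Likewise $R_2$ is free over $\K[a]$. Hence $T/I=R_1\otimes_\K R_2$ is flat over $\K[x_v]\otimes_\K\K[a]=\K[x_v,a]$, so $\theta_1$, being a nonzerodivisor on $\K[x_v,a]$, is a nonzerodivisor on $T/I$, and $(T/I)/\theta_1(T/I)\cong R_1\otimes_{\K[x_v]}R_2$.

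The main obstacle is to show that $\theta_2=y_v-b$ is a nonzerodivisor on $R_1\otimes_{\K[x_v]}R_2$; this is the only point where simpliciality of $v$ in \emph{both} pieces is used essentially, and the flatness trick just used is unavailable here because a single $S_H/J_H$ need not be flat over $\K[x_v,y_v]$ (already $S_{K_2}/J_{K_2}$ is not). I would argue through associated primes. Using that $v$ is simplicial in $G_2$ as well, one checks that $R_1\otimes_{\K[x_v]}R_2$ embeds, via the flatness of $R_1/P$ and of $R_2$ over $\K[x_v]$, into $\prod_{P,Q}\,(R_1/P)\otimes_{\K[x_v]}(R_2/Q)$, the product over all minimal primes $P$ of $J_{G_1}$ and $Q$ of $J_{G_2}^{\flat}$, and that each factor is a domain — here the key input is that the determinantal quotients $S_H/J_{\widetilde H}$ are geometrically integral, so that gluing two of them along the transcendental element $x_v$ again yields a domain. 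In particular $R_1\otimes_{\K[x_v]}R_2$ is reduced, and it then suffices to see that $\theta_2$ is nonzero in each such domain; but $y_v$ is part of a homogeneous $\K[x_v]$‑basis of $R_1/P$, so $\theta_2=y_v\otimes1-1\otimes b$ is a nonzero element of the free $(R_2/Q)$‑module $(R_1/P)\otimes_{\K[x_v]}(R_2/Q)$. Hence $\theta_2$ is a nonzerodivisor on $R_1\otimes_{\K[x_v]}R_2$, $\theta_1,\theta_2$ is a regular sequence on $T/I$, and the product formula follows.
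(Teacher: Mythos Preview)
The paper does not give a proof of this proposition at all: it is simply quoted from Herzog--Rinaldo \cite[Proposition~3]{her2}. So there is nothing to compare your argument against here; what you have written is an independent proof.

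Your strategy is sound and standard: duplicate the gluing vertex, use the K\"unneth formula \eqref{betti-product} for the ``separated'' ideal $I=J_{G_1}+J_{G_2}^{\flat}$ in disjoint variables, and then kill the duplication by the linear forms $\theta_1=x_v-a$, $\theta_2=y_v-b$, checking that these form a regular sequence on $T/I$. The treatment of $\theta_1$ is clean, and the embedding
\[
R_1\otimes_{\K[x_v]}R_2\ \hookrightarrow\ \prod_{P,Q}(R_1/P)\otimes_{\K[x_v]}(R_2/Q)
\]
via flatness of $R_2$ and of each $R_1/P$ over $\K[x_v]$ is correctly justified.

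The one step that is too elliptical is the claim that each $(R_1/P)\otimes_{\K[x_v]}(R_2/Q)$ is a domain. Saying that the determinantal quotients $S_H/J_{\widetilde H}$ are geometrically integral \emph{over $\K$} is not by itself enough: geometric integrality over $\K$ does not guarantee that a fibred tensor product over $\K[x_v]$ is a domain (for instance $\K[x,y]/(y^2-x)$ is geometrically integral over $\K$ when $\operatorname{char}\K\neq 2$, yet two copies tensored over $\K[x]$ give $\K[y,z]/(y^2-z^2)$). What makes your situation work is the stronger fact that the determinantal ring containing $x_v$ becomes a \emph{polynomial ring} over $\K(x_v)$ after inverting $x_v$ (solve $y_i=x_iy_v/x_v$); hence the localisation of $(R_1/P)\otimes_{\K[x_v]}(R_2/Q)$ at $x_v$ is a polynomial ring over $\K(x_v)$, and since $x_v$ is a nonzerodivisor on the tensor product (it is $\K[x_v]$--free, being a free $R_2/Q$--module), the tensor product embeds in this localisation and is therefore a domain. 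Once you insert this one sentence, your proof is complete.
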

	As a corollary of the above Proposition, we get that if $G=G_1\cup \cdots \cup G_l$ is a decomposition into indecomposable graphs $G_i$, then $\pd(S/J_G)=\sum_{i=1}^{l} \pd(S_{G_i}/J_{G_i})$ and $\reg(S/J_G)=\sum_{i=1}^{l} \reg(S_{G_i}/J_{G_i})$. These two equalities follow from \cite[Theorem 2.7]{Rinaldo-Rauf} and \cite[Theorem 3.1]{JNR1}, respectively, as well. Also, if for each $i=1,\dots,l$, $\beta_{p_i,p_i+r_i}(S_{G_i}/J_{G_i})$ is an extremal Betti number of $S_{G_i}/J_{G_i}$, then $\beta_{p,p+r}(S/J_G)=\prod_{i=1}^{l}\beta_{p_i,p_i+r_i}(S_{G_i}/J_{G_i})$ is an extremal Betti number of $S/J_G$, where $p=\sum_{i=1}^{l}p_i$ and $r=\sum_{i=1}^{l}r_i$. Therefore to find the regularity, projective dimension, and extremal Betti number of $G$, it is enough to consider $G$ to be an indecomposable graph. So for the rest of the section, we assume that $G$ is an indecomposable graph.
	
	For a connected graph $G$, $\kappa(G)\geq 1$, and so by \cite[Theorems 3.19 and 3.20]{Arindam}, $\depth(S/J_G)\leq n+1$. Let $H=C_k\cup_e K_m$ be the clique sum of $C_k$ and $K_m$ along an edge $e$  for $k\geq 3$, $m\geq 2$. Let $G$ be the clique sum of $H$ and a forest along some vertices of $H$. We now study the depth of $S/J_G$ and prove that $n\leq \depth(S/J_G)$. Therefore, $\depth(S/J_G)\in \{n,n+1\}$. We then characterize $G$ with $\depth(S/J_G)=n$ and $\depth(S/J_G)=n+1$. Also, we obtain a lower bound for the regularity and show that if there are trees attached to each vertex of $C_k$, then $\iv(G)+1\leq \reg (S/J_G)$, otherwise $\iv(G)-1\leq \reg (S/J_G)$ by computing its one distinguished extremal Betti number. 
\begin{theorem}\label{depth-girth}
	Let $k\geq 3$ and $m\geq 2$. Let $G$ be the clique sum of $H=C_k\cup_e K_m$ and a forest along some vertices of $H$. Then $\depth(S/J_G)\geq n$. Let $A=\{u\in V(C_k): \text{there is a tree incident on }u\}$. If $e\cap A \neq \emptyset$ and $G[A]$ is connected with $k-2\leq |A|$, then $\depth(S/J_G)=n+1$.
\end{theorem}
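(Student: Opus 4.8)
The plan is to prove both statements by induction, in each case peeling off a vertex via Ohtani's Lemma~\ref{ohtani-lemma} and feeding the resulting short exact sequence~\eqref{ohtani-ses} into the depth lemma~\ref{depth-lemma}. Since isolated vertices, pendant $K_2$'s and decomposable summands change $\depth(S/J_{-})$ only by amounts that can be read off from~\eqref{betti-product} and Proposition~\ref{betti-decomposition}, and since all the claimed bounds are compatible with these, I may assume at the outset that $G$ is connected and indecomposable.

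For $\depth(S/J_G)\geq n$ I would enlarge the statement to a class $\mathcal C$ that is closed under the three operations $G\mapsto G\setminus v$, $G\mapsto G_v$, $G\mapsto G_v\setminus v$ used below -- concretely, graphs built from a cycle with finitely many complete graphs clique-summed along some of its edges and a block graph clique-summed along some further vertices, allowing the cycle length to vary -- and induct on $\mu(G):=|V(G)|+\#\{\text{non-simplicial vertices of }G\}$. The base case is $G=C_j\cup_e K_m$ with no extra structure, where $\depth(S/J_G)\geq|V(G)|$ by Proposition~\ref{betti-cliquesum} ($m\geq 3$), by Zafar's theorem \cite{ZafarBMS} ($m=2$, $j\geq 4$), or trivially for $K_3$ ($m=2$, $j=3$). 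For the inductive step choose a leaf $w$ of the peripheral block graph at maximal distance from the core and let $v$ be its neighbour; then $\deg_G v\geq 2$, so $v$ is not simplicial and Ohtani's Lemma applies. Now $G\setminus v$ is a graph of $\mathcal C$ on fewer vertices together with some isolated vertices (each contributing $+2$ to the depth); $G_v\setminus v$ is a graph of $\mathcal C$ on fewer vertices; and $G_v=G$ with $N_G[v]$ completed is a graph of $\mathcal C$ on the same vertex set but with one fewer non-simplicial vertex, so $\mu(G_v)<\mu(G)$. The induction hypothesis, together with~\eqref{betti-product}, therefore gives $\depth\geq n$ for the first and third of these and $\depth\geq n-1$ for the second, and Lemma~\ref{depth-lemma}(1) yields $\depth(S/J_G)\geq\min\{n,(n-1)+1\}=n$. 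The delicate part of this argument is precisely the claim that $\mathcal C$ is closed under $G\mapsto G_v$: completing $N_G[v]$ may merge blocks of the peripheral block graph, and, when $v$ is on the cycle, it introduces a chord and hence shortens the cycle -- which is why $\mathcal C$ has to be set up to tolerate this.

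For the equality $\depth(S/J_G)=n+1$ note first that by the preceding part and the Banerjee--Betancourt bound \cite{Arindam} we have $\depth(S/J_G)\in\{n,n+1\}$, so it is enough to show $\depth(S/J_G)\geq n+1$, which I would prove by induction on $k$. Pick $v\in A\cap e$ -- this is where $A\cap e\neq\emptyset$ is used -- and, since $A$ contains $k-2$ consecutive vertices, arrange that $v$ lies in such a block; let $s\geq 1$ be the number of trees of $G$ meeting $v$. Apply Ohtani's Lemma at $v$. Deleting $v$ opens the cycle, so $G\setminus v$ is a disjoint union of a block graph whose maximal cliques pairwise meet in at most one vertex (a path, the clique $K_{m-1}$, and the attached trees) and the $s$ subtrees rooted at $v$; by Ene--Herzog--Hibi \cite{her1} this gives $\depth(S/J_{G\setminus v})=(n-1)+(1+s)\geq n+1$. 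Completing $N_G[v]$ adds the chord joining the two cycle-neighbours of $v$, so $G_v$ is again of the form $C_{k-1}\cup_{e'}K_{m'}$ with a forest attached, with $|V(G_v)|=n$, $m'\geq 2$, and -- and this is where the hypothesis that $A$ contains $k-2$ consecutive vertices is used -- the induction hypotheses are inherited: $C_{k-1}$ carries $k-3$ consecutive tree-vertices, one of which lies on $e'$. Hence $\depth(S/J_{G_v})=n+1$ and, by the same reasoning, $\depth(S/J_{G_v\setminus v})=(n-1)+1=n$, and Lemma~\ref{depth-lemma}(1) forces $\depth(S/J_G)\geq\min\{n+1,n+1\}=n+1$. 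The base $k=3$ is the same computation, except that there $G_v$ is itself a block graph of Ene--Herzog--Hibi type (one clique with trees hanging off), whose depth $n+1$ is immediate, and \cite{KM-CA} takes care of $C_3\cup_e K_m$ itself. The main obstacle throughout is the bookkeeping around $G_v$ -- keeping it inside the class under consideration and, in the second part, checking that the combined condition ``$A\cap e\neq\emptyset$ and $A$ contains $k-2$ consecutive vertices'' is exactly what propagates when the cycle is shortened.
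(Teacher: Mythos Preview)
Your treatment of the equality $\depth(S/J_G)=n+1$ is essentially the paper's proof: pick $v\in A\cap e$, apply Ohtani's lemma, observe that $G\setminus v$ is a disconnected block graph so the Ene--Herzog--Hibi bound gives depth at least $n+1$, and that $G_v$, $G_v\setminus v$ inherit the hypotheses on the shorter cycle $C_{k-1}$ so induction closes. Your remark that $v$ can always be taken inside a $(k-2)$-block of $A$ is correct (if not, $|A|\geq k-1$ and then a larger consecutive block contains $v$), though the paper does not pause to say this.

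For the inequality $\depth(S/J_G)\geq n$, however, your approach diverges from the paper's and has a genuine gap. The paper also inducts, but always applies Ohtani at $v=v_1\in e$; then $N_G[v]$ swallows the old clique $K_m$, so $G_v$ is again of the form $C_{k-1}\cup_{e'}K'$ with a forest attached---exactly the theorem's hypothesis, no enlargement needed---and the base case $k=3$ reduces to block graphs. Your choice of $v$ as the neighbour of a farthest leaf forces you into the larger class $\mathcal C$, and here the problem is the base case. When the attached structure consists only of whiskers and you pick $v$ on the cycle with $v\notin e$, the graph $G_v$ acquires a \emph{second} clique along the new edge $\{v_{\text{prev}},v_{\text{next}}\}$ while keeping $K_m$ on $e$; iterating, your induction can land on a leafless graph that is a cycle with several cliques glued along distinct edges. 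Your stated base case (Proposition~\ref{betti-cliquesum}, Zafar, $K_3$) only handles a single clique, so the induction does not bottom out. One can repair this either by proving the multi-clique base case directly (the paper in fact does a two-clique version later, Proposition~\ref{betti-cliquesum2}) or---much more simply---by always taking $v\in e$ as the paper does, which keeps you inside the original class throughout.
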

\begin{proof}
	 Let $e=\{v,v_2\}$. By Lemma \ref{ohtani-lemma}, $J_G=J_{G_v}\cap ((x_v,y_v)+J_{G\setminus v})$, where $G\setminus v$  is a block graph on $n-1$ vertices. So by \cite[Theorem 1.1]{her1}, $\depth(S'/J_{G\setminus v})\geq n$. We prove the theorem by induction on $k$. For the case $k=3$, it can be noted that $G_v$ and $G_v\setminus v$ are both block graphs on $n$ and $n-1$ vertices, respectively. Thus it follows from \cite[Theorem 1.1]{her1} that $\depth(S/J_{G_v})=n+1$ and $\depth(S'/J_{G_v\setminus v})=n$.
	Consider the short exact sequence \eqref{ohtani-ses} and apply Lemma \ref{depth-lemma} to get that $\depth(S/J_G)\geq n$. 
	Now suppose that there is one tree attached to $v$. Then $G\setminus v$ is a disconnected block graph, and hence by \cite[Theorem 1.1]{her1}, $\depth(S'/J_{G\setminus v})\geq n+1$. Therefore, by Lemma \ref{depth-lemma} and the short exact sequence \eqref{ohtani-ses}, we have $\depth(S/J_G)=n+1$.
	 
	We now assume that $k\geq 4$. Let $K'$ and $K''$ be complete graphs on vertex sets $N_G[v]$ and $N_G(v)$, respectively. Also, let $H'=C_{k-1}\cup_{e'}K'$ and $H''=C_{k-1}\cup_{e'}K''$, where $e'=\{v_2,v_k\}$ and $V(C_{k-1})=\{v_2,\dots,v_k\}$. 
	Then it can be observed that $G_v$ is the clique sum of $H'$ and a forest along some vertices of $G$. Also, $G_v\setminus v$ is the clique sum of $H''$ and a forest along some vertices of $G$. Therefore, by induction $\depth(S/J_{G_v})\geq n$ and $\depth(S'/J_{G_v\setminus v})\geq n-1$. Thus, by applying Lemma \ref{depth-lemma} on the short exact sequence \eqref{ohtani-ses}, we get $\depth(S/J_G)\geq n$.
	Suppose now that $v\in A$ and $G[A]$ is connected with $k-2\leq |A|$. Then $G_v[A\setminus \{v\}]$ and $G_v\setminus v[A\setminus \{v\}]$ are both connected with $k-3\leq |A\setminus \{v\}|$. Hence, by induction $\depth(S/J_{G_v})=n+1$ and $\depth(S'/J_{G_v\setminus v})=n$. By \cite[Theorem 1.1]{her1}, we have $\depth(S'/J_{G\setminus v})\geq n+1$. Therefore it follows from Lemma \ref{depth-lemma} and the short exact sequence \eqref{ohtani-ses} that $\depth(S/J_G)=n+1$.
\end{proof}

Let $H=C_3\cup_e K_m$ for $m\geq 2$. Let $G$ be the clique sum of $H$ and a forest along some vertices of $H$. If $m=2$, then $G$ is a block graph. Ene, Herzog and Hibi \cite[Theorem 1.1]{her1} proved that in this case $\depth(S/J_G)=n+1$. Let $m\geq 3$. In Theorem \ref{depth-girth} we proved that $\depth(S/J_G)\geq n$, and if there are trees attached to either one vertex of $e$ or both the vertices of $e$, then $\depth(S/J_G)=n+1$. If there are no trees attached to any vertex of $e$, then $G$ is a generalized block graph. Hence it follows from \cite[Theorem 3.2]{KM-CA} and \cite[Theorem 3.4]{Arv-GBG} that $\depth(S/J_G)=n$ and $\beta_{n,n+\iv(G)}(S/J_G)$ is an extremal Betti number. Now we consider the case when trees are attached to at least one of the vertices of $e$.
	\begin{theorem}\label{depth-girth3}
	    Let $H=C_3\cup_e K_m$ for $m\geq 3$. Let $G$ be the clique sum of $H$ and a forest along some vertices of $H$. If there are trees attached to one vertex of $e$, then $\beta_{n-1,n-1+\iv(G)}(S/J_G)$ is an extremal Betti number and if there are trees attached to both the vertices of $e$, then $\beta_{n-1,n-1+\iv(G)+1}(S/J_G)$ is an extremal Betti number. In particular, $\iv(G)\leq \reg(S/J_G)$.
	\end{theorem}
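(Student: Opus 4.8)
The plan is to fix the homological degree in which the extremal Betti number lives, and then to read it off from Ohtani's short exact sequence. Since a tree is attached to a vertex of $e$, Theorem \ref{depth-girth} applies with $k=3$ (for $k=3$ the hypothesis ``$A$ contains $k-2$ consecutive vertices'' just says $A\neq\emptyset$), so $\depth(S/J_G)=n+1$ and hence $\pd(S/J_G)=n-1$ by Auslander--Buchsbaum. Therefore the distinguished extremal Betti number of $S/J_G$ in homological degree $\pd$ is $\beta_{n-1,\,n-1+r}(S/J_G)$, where $r:=\max\{\,l:\beta_{n-1,\,n-1+l}(S/J_G)\neq0\,\}$, and it suffices to prove that $r=\iv(G)$ when a tree is attached to exactly one vertex of $e$ and $r=\iv(G)+1$ when trees are attached to both; since $r\le\reg(S/J_G)$, the inequality $\iv(G)\le\reg(S/J_G)$ is then automatic.

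Write $e=\{v,v_2\}$; in the one-tree case take $v$ to be the endpoint of $e$ with no tree, so that the other endpoint of $e$ always carries a tree. Apply Ohtani's Lemma \ref{ohtani-lemma} to $(G,v)$ and work with \eqref{ohtani-ses}. The key point is that $G\setminus v$, $G_v$ and $G_v\setminus v$ are all (possibly disconnected) block graphs: forming $G_v$ merges the two maximal cliques $C_3=K_3$ and $K_m$ (and any whiskers at $v$) into a single clique on $N_G[v]$, with the trees at the remaining vertices of $H$ still attached, so $G_v$ is a clique with trees hanging off; $G_v\setminus v$ is this graph with the simplicial vertex $v$ deleted; and $G\setminus v$ is $H\setminus v$ (a clique with one pendant vertex) together with those trees, plus the forest rooted at $v$ now split off into its own components. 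For block graphs, Ene--Herzog--Hibi \cite{her1} give the depth and Herzog--Rinaldo \cite{her2} the extremal Betti numbers; in particular, in homological degree equal to the projective dimension the Betti numbers of a connected block graph lie in internal degrees at most $\iv+1$ relative to $\pd$, with equality attained. A short count, using that the tree-bearing endpoint of $e$ stays internal, gives $\iv(G_v)=\iv(G_v\setminus v)=\iv(G)-1$, and, in the one-tree case, $\iv(G\setminus v)=\iv(G)-1$. Finally, since $S=S'[x_v,y_v]$, the graded Betti table of $S/((x_v,y_v)+J_{G\setminus v})$ is that of $S'/J_{G\setminus v}$ tensored with the Koszul complex on $x_v,y_v$: this adds $2$ to the projective dimension and leaves the regularity coordinate of every Betti number fixed, and similarly for $G_v\setminus v$.

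Now feed this into the long exact sequence \eqref{ohtani-tor} in homological degree $n-1$, using $\Tor_n^S(S/J_G)=0$ and $\pd^S(S/J_{G_v})=n-1$. In the one-tree case $G\setminus v$ is connected, so $S/((x_v,y_v)+J_{G\setminus v})$ and $S/((x_v,y_v)+J_{G_v\setminus v})$ have projective dimension $n$ with top $\Tor$ in internal degree $n+\iv(G)$, whereas $S/J_{G_v}$ has its top $\Tor$ in internal degree $(n-1)+\iv(G)$; in each internal degree \eqref{ohtani-tor} then yields a short exact sequence trapping $\Tor_{n-1}^S(S/J_G)$ between a cokernel coming from homological degree $n$ and the block-graph $\Tor$'s in homological degree $n-1$, and chasing degrees gives $\beta_{n-1,\,n-1+\iv(G)}(S/J_G)\neq0$ together with $\beta_{n-1,\,n-1+l}(S/J_G)=0$ for $l>\iv(G)$, i.e.\ $r=\iv(G)$. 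In the two-tree case every choice of $v$ has a tree attached, so $G\setminus v$ is disconnected; then $\depth(S'/J_{G\setminus v})$ is larger and $\pd^S(S/((x_v,y_v)+J_{G\setminus v}))$ drops below $n$, so that summand only contributes in homological degree $\le n-1$, the surviving contribution of $S/((x_v,y_v)+J_{G_v\setminus v})$ in homological degree $n$ and internal degree $n+\iv(G)=(n-1)+(\iv(G)+1)$ injects into $\Tor_{n-1}^S(S/J_G)$, and $S/J_{G_v}$, together with the estimate $\iv(G\setminus v)\le\iv(G)-1$ applied to $S/((x_v,y_v)+J_{G\setminus v})$, forces vanishing above that degree, so $r=\iv(G)+1$.

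I expect the main difficulty to be controlling the connecting homomorphisms in \eqref{ohtani-tor} near the boundary internal degree: in the one-tree case one must show $\beta_{n-1,\,n-1+(\iv(G)+1)}(S/J_G)=0$, and for this it is needed that the block graph $G\setminus v$ already satisfies $\reg(S'/J_{G\setminus v})=\iv(G\setminus v)+1$ (not merely the a priori bound $\ge\iv+1$), so that $\Tor_{n-1}^S(S/((x_v,y_v)+J_{G\setminus v}))$ does not reach internal degree $n+\iv(G)$; this is precisely the kind of extremal-Betti data that the structure theorem of Herzog--Rinaldo \cite{her2} supplies for the concrete block graph at hand (a clique with one pendant vertex and trees attached). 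Should that fail to be available in the necessary generality, the fallback is an induction on the number of edges of the attached forest: remove a leaf and apply Ohtani's Lemma at its neighbour --- which turns the removal of that leaf into the removal of a free vertex from a clique, exactly as in the proofs of Proposition \ref{betti-cliquesum} and Theorem \ref{depth-girth} --- with base cases $H$ carrying a single whisker at one, respectively at both, vertices of $e$, where the analysis above applies directly.
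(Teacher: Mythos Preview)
Your two-tree case is essentially the paper's argument and is fine. The gap is in the one-tree case, and it comes from your choice of $v$. By taking $v$ to be the endpoint of $e$ \emph{without} a tree, you make $G\setminus v$ a \emph{connected} block graph on $n-1$ vertices, so $\pd^S\bigl(S/((x_v,y_v)+J_{G\setminus v})\bigr)=n$, not $\le n-1$. Hence $\Tor_n$ of the middle term in \eqref{ohtani-ses} does not vanish. With your counts $\iv(G\setminus v)=\iv(G_v\setminus v)=\iv(G)-1$, both $S/((x_v,y_v)+J_{G\setminus v})$ and $S/((x_v,y_v)+J_{G_v\setminus v})$ have their unique extremal Betti number at $(n,\,n+\iv(G))$. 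To conclude $\beta_{n-1,\,n-1+\iv(G)+1}(S/J_G)=0$ you therefore need the induced map
\[
\Tor_n^S\!\left(\frac{S}{(x_v,y_v)+J_{G\setminus v}},\K\right)_{n+\iv(G)}\longrightarrow \Tor_n^S\!\left(\frac{S}{(x_v,y_v)+J_{G_v\setminus v}},\K\right)_{n+\iv(G)}
\]
to be surjective. This is not a formal consequence of the Betti data, and the Herzog--Rinaldo input you cite only gives $\reg(S'/J_{G\setminus v})=\iv(G\setminus v)+1$, i.e.\ vanishing of $\Tor_{n-1}$ of the middle above degree $\iv(G)$; it says nothing about this map. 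The same obstruction blocks the nonvanishing claim $\beta_{n-1,\,n-1+\iv(G)}(S/J_G)\neq 0$: at that total degree both source and target above are again nonzero, and you have no control of the cokernel.

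The paper avoids this entirely by making the \emph{opposite} choice: take $v$ to be the endpoint of $e$ that \emph{does} carry a tree (the same choice you already make in the two-tree case). Then $G\setminus v$ is disconnected, $\pd^S\bigl(S/((x_v,y_v)+J_{G\setminus v})\bigr)\le n-1$, and $\Tor_n$ of the middle vanishes, yielding a clean injection $\Tor_n^S\bigl(S/((x_v,y_v)+J_{G_v\setminus v})\bigr)\hookrightarrow\Tor_{n-1}^S(S/J_G)$. The internal-vertex count changes accordingly: in the one-tree case $v_2$ now also becomes simplicial in $G_v$ (it lies only in the big clique on $N_G[v]$, having no tree), so $\iv(G_v\setminus v)=\iv(G)-2$, and the extremal Betti number of $S/((x_v,y_v)+J_{G_v\setminus v})$ lands at $(n,\,n+\iv(G)-1)$, injecting to give $\beta_{n-1,\,n-1+\iv(G)}(S/J_G)\neq 0$ exactly as required.
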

	\begin{proof}
		Let $e=\{v,v_2\}$ and suppose that there are trees attached to $v$ in $G$. Then $G\setminus v$ is a disconnected block graph on $n-1$ vertices. By virtue of \cite[Theorem 1.1]{her1} and \eqref{betti-product}, we have $p=\pd(S/((x_v,y_v)+J_{G\setminus v}))\leq n-1$. By Lemma \ref{ohtani-lemma}, $J_G=J_{G_v}\cap ((x_v,y_v)+J_{G\setminus v}).$ Note that $G_v$ and $G_v\setminus v$ are block graphs on $n$ and $n-1$ vertices respectively. Therefore it follows from \cite[Theorem 6]{her2} and \eqref{betti-product} that $\beta_{p,p+\iv(G\setminus v)+1}(S/((x_v,y_v)+J_{G\setminus v}))$,  $\beta_{n-1,n-1+\iv(G_v)+1}(S/J_{G_v})$ and $\beta_{n,n+\iv(G_v\setminus v)+1}(S/((x_v,y_v)+J_{G_v\setminus v}))$ are extremal Betti numbers.
		We consider the long exact sequence \eqref{ohtani-tor} for $i=n-1$
		\begin{multline}\label{ohtani-tor1}
		0 \longrightarrow \Tor_{n}^{S}\left(\frac{S}{(x_v,y_v)+J_{G_v\setminus v}},\K\right)_{n-1+j} \longrightarrow \Tor_{n-1}^{S}\left( \frac{S}{J_G},\K\right)_{n-1+j}\longrightarrow \\ \longrightarrow \Tor_{n-1}^{S}\left( \frac{S}{(x_v,y_v)+J_{G\setminus v}},\K\right)_{n-1+j} 
		\oplus \Tor_{n-1}^{S}\left(\frac{S}{J_{G_v}},\K\right)_{n-1+j} \longrightarrow \cdots
		\end{multline}
		It is known \cite[Lemma 3.2]{Arv-Jaco} that $\iv(G)> \iv(G_v)=\iv(G_v\setminus v)$ and $\iv(G)> \iv(G\setminus v)$. Thus $\beta_{n-1,n-1+j}(S/J_{G_v})=0=\beta_{n-1,n-1+j}(S/((x_v,y_v)+J_{G\setminus v}))$ for $j\geq \iv(G)+1$. Therefore, we obtain
		\begin{align*}
		\Tor_{n}^{S}\left(\frac{S}{(x_v,y_v)+J_{G_v\setminus v}},\K\right)_{n-1+j} \simeq \Tor_{n-1}^{S}\left( \frac{S}{J_G},\K\right)_{n-1+j} \text{ for } j\geq \iv(G)+1.
		\end{align*}
		 If there is  no tree attached to $v_2$, then $\iv(G)=\iv(G_v\setminus v)+2$. Therefore it follows from the equation \eqref{ohtani-tor1} that $\beta_{n-1,n-1+\iv(G)}(S/J_G)\neq 0$ and $\beta_{n-1,n-1+j}(S/J_G)=0$ for $j\geq \iv(G)+1$. If there is a tree attached to $v_2$, then $\iv(G)=\iv(G_v\setminus v)+1$, and similarly, we have $\beta_{n-1,n-1+\iv(G)+1}(S/J_G)\neq 0$ and $\beta_{n-1,n-1+j}(S/J_G)=0$ for $j\geq \iv(G)+2$. By Theorem \ref{depth-girth}, $\pd(S/J_G)=n-1$, and hence, either $\beta_{n-1,n-1+\iv(G)}(S/J_G)$ or $\beta_{n-1,n-1+\iv(G)+1}(S/J_G)$ is an extremal Betti number of $S/J_G$, as desired.
		\end{proof}
		\noindent
		For $k=3$, we proved that $\depth(S/J_G)=n+1$ if and only if there are trees attached to at least one vertex of $e$ and in this case either $\beta_{n-1,n-1+\iv(G)}(S/J_G)$ or $\beta_{n-1,n-1+\iv(G)+1}(S/J_G)$ is an extremal Betti number. From now on, we assume that $k\geq 4$. Let $H=C_k\cup_e K_m$ for $m\geq 2$. Let $G$ be the clique sum of $H$ and a forest along some vertices $H$. First, we compute one distinguished extremal Betti number for the class of graphs $G$ with $\depth(S/J_G)=n+1$, considered in Theorem \ref{depth-girth}.

		\begin{theorem}\label{depthn+1-girth}
			Let $H=C_k\cup_e K_m$ for $k\geq 4$ and $m\geq 2$. Also, let $G$ be the clique sum of $H$ and a forest along some vertices of $H$. Let $A=\{u\in V(C_k): \text{there is a tree incident on }u\}$. If $e\cap A \neq \emptyset$ and $G[A]$ is connected with $k-2\leq |A|\leq k-1$, then either $\beta_{n-1,n-1+\iv(G)-1}(S/J_G)$ or $\beta_{n-1,n-1+\iv(G)}(S/J_G)$ is an extremal Betti number. If $A=V(C_k)$, then $\beta_{n-1,n-1+\iv(G)+1}(S/J_G)$ is an extremal Betti number. In particular, $\iv(G)-1\leq \reg(S/J_G)$.
		\end{theorem}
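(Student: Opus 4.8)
The plan is to imitate the $\Tor$-sequence argument in the proof of Theorem~\ref{depth-girth3}, proceeding by induction on $k\ge 4$ (the case $k=3$ is exactly Theorem~\ref{depth-girth3}). Under either hypothesis, Theorem~\ref{depth-girth} gives $\depth(S/J_G)=n+1$, so the Auslander--Buchsbaum formula forces $\pd(S/J_G)=n-1$; it therefore suffices to locate the largest $j$ with $\beta_{n-1,\,n-1+j}(S/J_G)\neq 0$ and to recognise this $j$ as $\iv(G)+1$ when $A=V(C_k)$ and as one of $\iv(G)-1,\iv(G)$ otherwise, the bound $\iv(G)-1\le\reg(S/J_G)$ being then immediate.

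First I would fix a vertex $v\in A\cap e$, so that a tree is incident to $v$; write $e=\{v,w\}$ and let $v_k$ be the other cycle-neighbour of $v$. As in the proof of Theorem~\ref{depth-girth}, $G_v$ is a clique sum of $C_{k-1}\cup_{e'}K'$ with a forest, $G_v\setminus v$ is a clique sum of $C_{k-1}\cup_{e'}K''$ with a forest (with $K'=G[N_G[v]]$, $K''=G[N_G(v)]$, $e'=\{w,v_k\}$), and $G\setminus v$ is a \emph{disconnected} block graph on $n-1$ vertices, the extra component(s) coming from the tree(s) at $v$. The crucial first observation is that the hypotheses descend. Writing $A_{G_v}$ and $A_{G_v\setminus v}$ for the sets of cycle vertices of $C_{k-1}$ carrying a tree, both equal $A\cap\{v_2,\dots,v_k\}$; since $A$ always contains $k-2$ consecutive vertices, this set must meet $e'$, and it equals $V(C_{k-1})$ exactly when $A=V(C_k)$. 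In particular the generalized-block-graph case of Theorem~\ref{depth-girth3} (no tree on the glue edge) never occurs, and the alternative ``$A\subsetneq V(C_k)$'' is inherited by both $G_v$ and $G_v\setminus v$. Hence for $k\ge5$ the induction hypothesis applies to $G_v$ and to $G_v\setminus v$, while for $k=4$ these reduced graphs fall directly under Theorem~\ref{depth-girth3}; and the structure of the block graph $G\setminus v$, together with \cite[Theorem~1.1]{her1}, \cite[Theorem~6]{her2} and \eqref{betti-product}, gives $\pd\bigl(S/((x_v,y_v)+J_{G\setminus v})\bigr)\le n-1$ with its top-degree Betti numbers controlled by $\iv(G\setminus v)$ and the number of components.

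I would then feed these data into the long exact sequence \eqref{ohtani-tor} of the pair $(G,v)$ at homological degree $i=n-1$, exactly as in \eqref{ohtani-tor1}. By \cite[Lemma~3.2]{Arv-Jaco} one has $\iv(G)>\iv(G_v)=\iv(G_v\setminus v)$ and $\iv(G)>\iv(G\setminus v)$; together with the estimates above this shows that in every internal degree $j$ at or above the target value the $\Tor$ of $S/J_{G_v}$ and of $S/((x_v,y_v)+J_{G\setminus v})$ vanish in homological degree $n-1$, so the connecting homomorphism yields an isomorphism $\Tor_n^{S}\bigl(S/((x_v,y_v)+J_{G_v\setminus v}),\K\bigr)_{n-1+j}\cong\Tor_{n-1}^{S}\bigl(S/J_G,\K\bigr)_{n-1+j}$ for all such $j$. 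The inductively known top Betti number of $S/((x_v,y_v)+J_{G_v\setminus v})$ lies in homological degree $n$ and internal degree $r$, with $r=\iv(G_v\setminus v)+1$ when $A=V(C_k)$ and $r\in\{\iv(G_v\setminus v)-1,\iv(G_v\setminus v)\}$ otherwise; transporting it across the isomorphism makes $\beta_{n-1,\,n-1+(r+1)}(S/J_G)$ the top Betti number in homological degree $n-1$. It then remains to compare $r+1$ with $\iv(G)$, i.e.\ to compute $\iv(G)-\iv(G_v\setminus v)$: this equals $1$ in general, but can be $2$ at a final reduction $C_4\rightsquigarrow C_3$ in which the vertex of $C_4$ opposite $v$ carries no tree, and tracking this together with the regime of the $C_3$-base yields $r+1=\iv(G)+1$ when $A=V(C_k)$ and $r+1\in\{\iv(G)-1,\iv(G)\}$ otherwise, which is the assertion.

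I expect the main obstacle to be precisely this bookkeeping: carrying the combinatorial alternative ($A=V(C_k)$ versus $A\subsetneq V(C_k)$, and within the latter which of $\iv(G)-1,\iv(G)$ is realised) and the exact value of $\iv(G)-\iv(G_v\setminus v)$ through each $C_k\rightsquigarrow C_{k-1}$ reduction, while simultaneously checking that the ``other'' summands $S/J_{G_v}$ and $S/((x_v,y_v)+J_{G\setminus v})$ in \eqref{ohtani-tor} contribute nothing in the critical degree --- for which the block-graph regularity estimate must be combined with the strict $\iv$-inequalities so that the transported Betti number is neither annihilated by the connecting map nor overtaken by a class coming from those summands.
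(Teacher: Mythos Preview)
Your proposal is correct and follows essentially the same route as the paper: induct on $k$ at a vertex $v\in A\cap e$, observe that the hypotheses descend to $G_v$ and $G_v\setminus v$ (with Theorem~\ref{depth-girth3} serving as the base), and read off the top Betti number in homological degree $n-1$ from the long exact sequence~\eqref{ohtani-tor}; your tracking of $\iv(G)-\iv(G_v\setminus v)$ and of the dichotomy $A=V(C_k)$ versus $A\subsetneq V(C_k)$ is exactly the case split the paper carries out for $k=4$.

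One point to sharpen, which is precisely the obstacle you flag in your last paragraph: you assert the isomorphism $\Tor_n(C)\cong\Tor_{n-1}(S/J_G)$ for all $j\geq r+1$, but the summand $B_1=S/((x_v,y_v)+J_{G\setminus v})$ is only guaranteed to vanish for $j\geq\iv(G)+1$ (its regularity is $\iv(G\setminus v)+1\leq\iv(G)$), and when $r+1=\iv(G)-1$ it may survive at $j=\iv(G)-1$ or $\iv(G)$. The paper therefore separates the two roles of the sequence: the \emph{injection} $\Tor_n(C)\hookrightarrow\Tor_{n-1}(S/J_G)$, valid for all $j$ since $\Tor_n(S/J_G)=\Tor_n(B_1)=\Tor_n(B_2)=0$, gives nonvanishing at $j=r+1$; the \emph{isomorphism}, valid only for $j\geq\iv(G)+1$, gives vanishing there. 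Together these place the extremal between $r+1$ and $\iv(G)$, which is the disjunction the theorem asserts --- it does not pin down which of $\iv(G)-1,\iv(G)$ occurs. With that separation your argument goes through unchanged.
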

		\begin{proof}
			Let $e=\{v,v_2\}$. Suppose that $A\cap e\neq \emptyset$, $G[A]$ is connected, and $k-2\leq |A|\leq k-1$. Since $A\cap e\neq \emptyset$, we may assume that $v\in A$. Then $G\setminus v$ is a disconnected block graph on $n-1$ vertices, and hence it follows from \cite[Theorem 6]{her2} and \eqref{betti-product} that $\beta_{p,p+\iv(G\setminus v)+1}(S/((x_v,y_v)+J_{G\setminus v}))$ is an extremal Betti number of $S/((x_v,y_v)+J_{G\setminus v})$, where $p=\pd(S/((x_v,y_v)+J_{G\setminus v}))$. We prove the assertion by induction on $k$. First assume that $k=4$. By Lemma \ref{ohtani-lemma}, $J_G=J_{G_v}\cap ((x_v,y_v)+J_{G\setminus v})$ where $G_v$ belong to the class of graphs considered in Theorem \ref{depth-girth3}. Therefore, $G_v\setminus v$ also belong to the class of graphs considered in Theorem \ref{depth-girth3}. Hence, either $\beta_{n-1,n-1+\iv(G_v)}(S/J_{G_v})$ or $\beta_{n-1,n-1+\iv(G_v)+1}(S/J_{G_v})$ is an extremal Betti number of $S/J_{G_v}$ and either $\beta_{n,n+\iv(G_v\setminus v)}(S/((x_v,y_v)+J_{G_v\setminus v}))$ or $\beta_{n,n+\iv(G_v\setminus v)+1}(S/((x_v,y_v)+J_{G_v\setminus v}))$ is an extremal Betti number of $S/((x_v,y_v)+J_{G_v\setminus v})$.
			It is known \cite[Lemma 3.2]{Arv-Jaco} that $\iv(G)> \iv(G_v)=\iv(G_v\setminus v)$ and $\iv(G)> \iv(G\setminus v)$. By virtue of \cite[Theorem 1.1]{her1}, $p\leq n-1$. Hence, we have $\beta_{n-1,n-1+j}(S/J_{G_v})=0=\beta_{n-1,n-1+j}(S/((x_v,y_v)+J_{G\setminus v}))$ for $j\geq \iv(G)+1$. Therefore, it follows from the equation \eqref{ohtani-tor1} that for $j\geq \iv(G)+1$,
			\begin{align}\label{ohtani-tor2}
			\Tor_{n}^{S}\left(\frac{S}{(x_v,y_v)+J_{G_v\setminus v}},\K\right)_{n-1+j} \simeq \Tor_{n-1}^{S}\left( \frac{S}{J_G},\K\right)_{n-1+j}.
			\end{align}
			\textbf{Case 1:} Let $A=\{v,v_2\}$ or $A=\{v,v_4\}$. By Theorem \ref{depth-girth3} and the equation \eqref{betti-product}, we get that $\beta_{n,n+\iv(G_v\setminus v)}(S/((x_v,y_v)+J_{G_v\setminus v}))$ is an extremal Betti number. In this case, $\iv(G)=\iv(G_v\setminus v)+2$. Therefore, it follows from \eqref{ohtani-tor1} that $\beta_{n-1,n-1+\iv(G)-1}(S/J_G)\neq 0$.
			\vskip 1mm
			\noindent
			\textbf{Case 2:} If $A=\{v,v_2,v_3\}$ or $A=\{v,v_4,v_3\}$, then by Theorem \ref{depth-girth3} and \eqref{betti-product}, we get that $\beta_{n,n+\iv(G_v\setminus v)}(S/((x_v,y_v)+J_{G_v\setminus v}))$ is an extremal Betti number. In this case, $\iv(G)=\iv(G_v\setminus v)+1$. Therefore, by putting $j=\iv(G)$ in \eqref{ohtani-tor1}, we have $\beta_{n-1,n-1+\iv(G)}(S/J_G)\neq 0$.
			\vskip 1mm
			\noindent
			\textbf{Case 3:} If $A=\{v,v_2,v_4\}$, then by Theorem \ref{depth-girth3} and \eqref{betti-product}, $\beta_{n,n+\iv(G_v\setminus v)+1}(S/((x_v,y_v)+J_{G_v\setminus v}))$ is an extremal Betti number. In this case, $\iv(G)=\iv(G_v\setminus v)+2$. Thus it follows from the equation \eqref{ohtani-tor1} that $\beta_{n-1,n-1+\iv(G)}(S/J_G)\neq 0$.
			
			For all the above three cases, it follows from \eqref{ohtani-tor2} that $\beta_{n-1,n-1+j}(S/J_G)=0$ for $j\geq \iv(G)+1$. Hence, either $\beta_{n-1,n-1+\iv(G)-1}(S/J_G)$ or $\beta_{n-1,n-1+\iv(G)}(S/J_G)$ is an extremal Betti number of $S/J_G$.
			\vskip 1mm
			\noindent
			\textbf{Case 4:} If $A=V(C_4)$, then by Theorem \ref{depth-girth3} and the fact $\iv(G)=\iv(G_v\setminus v)+1$, we have $\beta_{n,n+\iv(G)}(S/((x_v,y_v)+J_{G_v\setminus v}))$ is an extremal Betti number of $S/((x_v,y_v)+J_{G_v\setminus v})$. Hence, it follows from \eqref{ohtani-tor2} that $\beta_{n-1,n+\iv(G)}(S/J_G)$ is an extremal Betti number of $S/J_G$.
			
			Now assume that $k\geq 5$. Let $K'$ and $K''$ denote complete graphs on vertex sets $N_G[v]$ and $N_G(v)$ respectively. Also, let $H'=C_{k-1}\cup_{e'}K'$ and $H''=C_{k-1}\cup_{e'}K''$, where $e'=\{v_2,v_k\}$ and $V(C_{k-1})=\{v_2,\dots,v_k\}$. 
			Then $G_v$ (resp. $G_v\setminus v$) is the clique sum of $H'$ (resp. $H''$) and a forest along some vertices of $G$. Clearly,  $G_v[A\setminus \{v\}]$ and $G_v\setminus v[A\setminus \{v\}]$ are both connected with $k-3\leq |A\setminus \{v\}|\leq k-2$, and so $G_v$ and $G_v\setminus v$ satisfy induction hypotheses. Therefore by induction either $\beta_{n-1,n-1+\iv(G_v)-1}(S/J_{G_v})$ or $\beta_{n-1,n-1+\iv(G_v)}(S/J_{G_v})$ is an extremal Betti number. Also, by induction and \eqref{betti-product}, either $\beta_{n,n+\iv(G_v\setminus v)-1}(S/((x_v,y_v)+J_{G_v\setminus v}))$ or $\beta_{n,n+\iv(G_v\setminus v)}(S/((x_v,y_v)+J_{G_v\setminus v}))$ is an extremal Betti number. Note that $\iv(G)=\iv(G_v)+1=\iv(G_v\setminus v)+1$. Therefore the assertion follows from the equations \eqref{ohtani-tor1} and \eqref{ohtani-tor2}. Suppose now that $A=V(C_k)$. Then clearly trees are attached to all the vertices of $C_{k-1}$ in both $G_v$ and $G_v\setminus v$. Thus by induction and \eqref{betti-product}, $\beta_{n-1,n-1+\iv(G_v)+1}(S/J_{G_v})$ and $\beta_{n,n+\iv(G_v\setminus v)+1}(S/((x_v,y_v)+J_{G_v\setminus v}))$ are extremal Betti numbers of $S/J_{G_v}$ and $S/((x_v,y_v)+J_{G_v\setminus v})$ respectively. Therefore it follows from \eqref{ohtani-tor2} and the fact $\iv(G)=\iv(G_v\setminus v)+1$ that $\beta_{n-1,n+\iv(G)}(S/J_G)$ is an extremal Betti number of $S/J_G$.
		\end{proof}
		
		By Theorem \ref{depth-girth}, we have that $\depth(S/J_G)\geq n$. We now characterize graphs attaining the lower bound. Also, we give a lower bound for the regularity of $S/J_G$ by computing one distinguished extremal Betti number of $S/J_G$. First, we consider the case $m\geq 3$.

		\begin{theorem}\label{depthn-girth}
       Let $H=C_k\cup_e K_m$ for $k\geq 4$ and $m\geq 3$. Let $G$ be the clique sum of $H$ and a forest along some vertices of $H$. Let $A=\{u\in V(C_k): \text{there is a tree incident on }u\}$. Suppose either $A \cap e = \emptyset$ or if $A \cap e \neq \emptyset$, then $A$ does not contain any $k-2$ consecutive vertices. Then either $\beta_{n,n+\iv(G)-1}(S/J_G)$ or $\beta_{n,n+\iv(G)}(S/J_G)$ is an extremal Betti number of $S/J_G$. In particular, $\depth(S/J_G)=n$ and $\iv(G)-1\leq \reg(S/J_G)$.
		\end{theorem}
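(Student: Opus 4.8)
The plan is to argue by induction on $k$, applying Lemma \ref{ohtani-lemma} at the vertex $v=v_1$ of $e=\{v_1,v_2\}$: when $A\cap e\neq\emptyset$ one may, after reflecting $C_k$ across $e$, assume $v_1\in A$, and when $A\cap e=\emptyset$ the choice of endpoint is immaterial. Theorem \ref{depth-girth} already gives $\depth(S/J_G)\geq n$, hence $\pd(S/J_G)\leq n$ by the Auslander--Buchsbaum formula. So it suffices to prove that (i) $\beta_{n,n+l}(S/J_G)=0$ for all $l\geq\iv(G)+1$, and (ii) $\beta_{n,n+\iv(G)}(S/J_G)\neq 0$ or $\beta_{n,n+\iv(G)-1}(S/J_G)\neq 0$. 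Indeed, (ii) forces $\pd(S/J_G)=n$, hence $\depth(S/J_G)=n$; and then (i)--(ii) say that the top nonzero graded Betti number of $S/J_G$ in homological degree $n$ lies in internal degree $\iv(G)$ or $\iv(G)-1$, which, since $\pd(S/J_G)=n$, makes it an extremal Betti number and gives $\reg(S/J_G)\geq\iv(G)-1$.

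I would then unwind the three modules in the short exact sequence \eqref{ohtani-ses} for the pair $(G,v)$. The graph $G\setminus v$ is a block graph on $n-1$ vertices, disconnected precisely when a tree hangs at $v$; so \cite[Theorem 1.1]{her1}, \cite[Theorem 6]{her2} and \eqref{betti-product} determine $p:=\pd(S/((x_v,y_v)+J_{G\setminus v}))$, which satisfies $p\leq n$ (with $p\leq n-1$ when $v\in A$), and determine its unique extremal Betti number $\beta_{p,p+\iv(G\setminus v)+1}$. The graphs $G_v$ and $G_v\setminus v$ are again clique sums of $C_{k-1}\cup_{e'}K'$ and $C_{k-1}\cup_{e'}K''$ with a forest, where $e'=\{v_2,v_k\}$ and $K'$, $K''$ are the complete graphs on $N_G[v]$ and $N_G(v)$, and both carry trees on the same set $A'=A\setminus\{v_1\}$ of cycle vertices. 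The combinatorial heart of the argument is the finite check that, relative to $C_{k-1}$ and $e'$, each of $G_v$ and $G_v\setminus v$ lands in exactly one already-settled regime: either it again satisfies the hypothesis of the present theorem, so the inductive hypothesis applies, or it satisfies that of Theorem \ref{depthn+1-girth} --- and there only the sub-case $A'\subsetneq V(C_{k-1})$ can occur, since $A'=V(C_{k-1})$ would force $A$ into the configuration excluded for $G$; for $k=4$ the counterparts are Theorem \ref{depth-girth3} and, when no tree meets $e'$, the generalized block graph case of \cite[Theorem 3.2]{KM-CA} and \cite[Theorem 3.4]{Arv-GBG}. What makes this check go through is that collapsing $C_k$ to $C_{k-1}$ can only shorten a run of tree-bearing cycle vertices, apart from possibly fusing two runs across the new chord $v_2v_k$ when $v_1\in A$, so a forbidden run of length $k-2$ meeting $e'$ cannot arise without one already present at $e$ in $G$. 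Throughout, \cite[Lemma 3.2]{Arv-Jaco} gives $\iv(G)>\iv(G_v)=\iv(G_v\setminus v)$ and $\iv(G)>\iv(G\setminus v)$, each gap being $1$ or $2$ according to the presence of a tree at the relevant cycle-neighbour of $v$.

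With these inputs, the proof is finished by feeding \eqref{ohtani-ses} into the long exact sequence \eqref{ohtani-tor} in homological degree $n$ --- the relevant stretch being the analogue of \eqref{ohtani-tor1} one homological step higher --- and running the same kind of case analysis as in the proofs of Theorems \ref{depth-girth3} and \ref{depthn+1-girth}. Using the $\iv$-comparisons and the bounds $\pd(S/((x_v,y_v)+J_{G\setminus v}))\leq n$, $\pd(S/J_{G_v})\leq n$ and $\pd(S/((x_v,y_v)+J_{G_v\setminus v}))\leq n+1$, one verifies that for $l\geq\iv(G)+1$ the graded pieces of \eqref{ohtani-tor} surrounding $\beta_{n,n+l}(S/J_G)$ all vanish, which is (i), and that an extremal Betti number of one of the three modules --- sitting in homological degree $n$ for $S/((x_v,y_v)+J_{G\setminus v})$ and $S/J_{G_v}$, and in homological degree $n+1$ for $S/((x_v,y_v)+J_{G_v\setminus v})$ --- forces $\beta_{n,n+j}(S/J_G)\neq 0$, the internal degree $j$ exceeding $\iv$ of the corresponding reduced graph by the relevant gap $1$ or $2$; one checks case by case that this $j$ lies in $\{\iv(G)-1,\iv(G)\}$, which is (ii). An appeal to Auslander--Buchsbaum then yields $\depth(S/J_G)=n$.

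I expect the one genuine difficulty to be this combinatorial dictionary: for every configuration of the tree-bearing vertices of $A$ near $e$ one must simultaneously determine which regime governs $G_v$ and $G_v\setminus v$ and whether the gap between $\iv(G)$ and $\iv$ of the reduced graph is $1$ or $2$, since the latter is precisely what decides whether the surviving extremal Betti number of $S/J_G$ sits in internal degree $\iv(G)-1$ or $\iv(G)$. Once that bookkeeping is settled, exactness of \eqref{ohtani-tor} and Auslander--Buchsbaum leave nothing further to choose.
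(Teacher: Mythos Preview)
Your proposal is correct and follows essentially the same route as the paper: induction on $k$ via Ohtani's lemma at a vertex of $e$, controlling the three modules in \eqref{ohtani-ses} through \cite{her1,her2}, the inductive hypothesis, and Theorem \ref{depthn+1-girth} (or, for $k=4$, Theorem \ref{depth-girth3} and the generalized block graph results), then extracting the extremal Betti number from \eqref{ohtani-tor} at $i=n$. The only difference is cosmetic: the paper sometimes switches from $v_1$ to $w=v_2$ (based on which of $v_3,v_k$ lies in $A$) so that $G_w$ and $G_w\setminus w$ stay in the inductive regime in all but one case, whereas you always use $v_1$ and allow a few more sub-cases to land in Theorem \ref{depthn+1-girth}'s regime --- but there the non-vanishing is read off from $G\setminus v_1$ exactly as in the paper's Case 4, so both bookkeepings close.
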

		\begin{proof}
			Let $e=\{v,v_2\}$. Then $N_{C_k}(v)=\{v_2,v_k\}$. We proceed by induction on $k$. Let $k=4$. First assume that $A \cap e \neq \emptyset$ and $A$ does not contain any $2$ consecutive vertices. If $v \in A$, then $v$ is an internal vertex in $G$ and by Lemma \ref{ohtani-lemma}, we can write $J_G=J_{G_v}\cap ((x_v,y_v)+J_{G\setminus v})$. It can be noted that $G_v$ and $G_v\setminus v$ are generalized block graphs on $n$ and $n-1$ vertices respectively. If $v_2 \in A$, then $v_2$ is an internal vertex in $G$ and it follows from Lemma \ref{ohtani-lemma} that $J_G=J_{G_{v_2}}\cap ((x_{v_2},y_{v_2})+J_{G\setminus {v_2}})$. We can make similar conclusion about $G_{v_2}$ and $G_{v_2}\setminus v_2$. Now, suppose $A \cap e = \emptyset$. If $A =\{v_3\}$, then $G_v$ and $G_v\setminus v$ are generalized block graphs and if $A =\{v_4\}$, then $G_{v_2}$ and $G_{v_2}\setminus v_2$ are generalized block graphs. If $A\cap e\neq \emptyset$ and $v\in A$ or $A=\{v_3\}$, then set $w=v$. If $A\cap e\neq \emptyset$ and $v_2\in A$ or $A=\{v_4\}$, then set $w=v_2$.
			Then by \cite[Theorem 3.2]{KM-CA} and \eqref{betti-product}, $\pd(S/J_{G_w})=n$ and $\pd(S/((x_w,y_w)+J_{G_w\setminus w}))=n+1$. Hence it follows from \cite[Theorem 3.4]{Arv-GBG} that $\beta_{n,n+\iv(G_w)}(S/J_{G_w})$ and $\beta_{n+1,n+1+\iv(G_w\setminus w)}(S/((x_w,y_w)+J_{G_w\setminus w}))$ are extremal Betti numbers of $S/J_{G_w}$ and $S/((x_w,y_w)+J_{G_w\setminus w})$ respectively. Since $w$ is not a simplicial vertex, we consider the long exact sequence \eqref{ohtani-tor} for $i=n$: 
			\begin{multline}\label{ohtani-torn1}
				0 \longrightarrow \Tor_{n+1}^{S}\left(\frac{S}{(x_w,y_w)+J_{G_w\setminus w}},\K\right)_{n+j} \longrightarrow \Tor_{n}^{S}\left( \frac{S}{J_G},\K\right)_{n+j}\longrightarrow \\ \longrightarrow \Tor_{n}^{S}\left( \frac{S}{(x_w,y_w)+J_{G\setminus w}},\K\right)_{n+j} 
				\oplus \Tor_{n}^{S}\left(\frac{S}{J_{G_w}},\K\right)_{n+j} \longrightarrow \cdots
			\end{multline}
			By virtue of \cite[Lemma 3.2]{Arv-Jaco}, we have $\iv(G)> \iv(G_w)=\iv(G_w\setminus w)$ and $\iv(G)> \iv(G\setminus w)$. Hence, $\beta_{n,n+j}(S/J_{G_w})=0$ for $j\geq \iv(G)$. Since $G\setminus w$ is a block graph on $n-1$, by \cite[Theorem 1.1]{her1}, $\pd(S/((x_w,y_w)+J_{G\setminus w}))\leq n$. Now it follows from \cite[Theorem 6]{her2} and \eqref{betti-product} that $\beta_{n,n+j}(S/((x_w,y_w)+J_{G\setminus w}))=0$ for $j\geq \iv(G)+1$. Therefore, we get the isomorphism:
			\begin{align}\label{ohtani-torn2}
				\Tor_{n+1}^{S}\left(\frac{S}{(x_w,y_w)+J_{G_w\setminus w}},\K\right)_{n+j} \simeq \Tor_{n}^{S}\left( \frac{S}{J_G},\K\right)_{n+j} \text{ for} j\geq \iv(G)+1.
			\end{align}
			If $v_3\in A$ or $v_4\in A$, then note that $\iv(G)=\iv(G_w\setminus w)+1$, otherwise $\iv(G)=\iv(G_w\setminus w)+2$. Therefore it follows from \eqref{ohtani-torn1} and \eqref{ohtani-torn2} that either $\beta_{n,n+\iv(G)}(S/J_G)\neq 0$ or $\beta_{n,n+\iv(G)-1}(S/J_G)\neq 0$ and $\beta_{n,n+j}(S/J_G)=0$ for $j\geq \iv(G)+1$. Hence, either $\beta_{n,n+\iv(G)-1}(S/J_G)$ or $\beta_{n,n+\iv(G)}(S/J_G)$ is an extremal Betti number of $S/J_G$.
			
			Now the last case is $A =\{v_3, v_4\}$. Then $G_v$ and $G_v\setminus v$ belong to class of graphs considered in Theorem \ref{depth-girth3}. Hence, $\beta_{n-1,n-1+\iv(G_v)}(S/J_{G_v})$ and $\beta_{n,n+\iv(G_v\setminus v)}(S/((x_v,y_v)+J_{G_v\setminus v}))$ are extremal Betti numbers. Note that $\iv(G)=\iv(G_v\setminus v)+1$. Therefore, $\beta_{n,n+j}(S/((x_v,y_v)+J_{G_v\setminus v}))=0$ for $j\geq \iv(G)$. Since $G\setminus v$ is a block graph on $n-1$, by \cite[Theorem 1.1]{her1}, $\pd(S/((x_v,y_v)+J_{G\setminus v}))=n$.  Therefore, it follows from the long exact sequence \eqref{ohtani-tor} that 
			\begin{align*}
		    \Tor_{n}^{S}\left( \frac{S}{J_G},\K\right)_{n+j}\simeq \Tor_{n}^{S}\left( \frac{S}{(x_v,y_v)+J_{G\setminus v}},\K\right)_{n+j} \text{ for } j\geq \iv(G).
			\end{align*}
		    Since $\iv(G)=\iv(G\setminus v)+1$, by the help of \cite[Theorem 6]{her2} and the equation \eqref{betti-product}, we get that $\beta_{n,n+\iv(G)}(S/((x_v,y_v)+J_{G\setminus v}))$ is an extremal Betti number of $S/((x_v,y_v)+J_{G\setminus v})$. Therefore, $\beta_{n,n+\iv(G)}(S/J_G)$ is an extremal Betti number of $S/J_G$.
		    
			Now we assume that $k\geq 5$. Suppose, either $A \cap e = \emptyset$ or if $A \cap e \neq \emptyset$, then $A$ does not contain any $k-2$ consecutive vertices. Let $K'$ and $K''$ be complete graphs on vertex sets $N_G[v]$ and $N_G(v)$ respectively. Also, let $H'=C_{k-1}\cup_{e'}K'$ and $H''=C_{k-1}\cup_{e'}K''$, where $e'=\{v_2,v_k\}$ and $V(C_{k-1})=\{v_2,\dots,v_k\}$. Then $G_v$ (resp. $G_v\setminus v$) is the clique sum of $H'$ (resp. $H''$) and a forest along some vertices of $G$. Obviously, $A\setminus \{v\}\subseteq V(C_{k-1})$ is the set of vertices at which trees are attached in both $G_v$ and  $G_v\setminus v$.
			\vskip 1mm
			\noindent
			\textbf{Case 1:} If $|A|\leq k-4$, then clearly $G_v$ and $G_v\setminus v$ satisfy induction hypotheses.
			\vskip 1mm
			\noindent
			\textbf{Case 2:} Let $|A|=k-3$. If $v\in A$, then also $G_v$ and $G_v\setminus v$ satisfy induction hypotheses. If $v\notin A$ and $v_2\in A$, then $G_{v_2}$ and $G_{v_2}\setminus v_2$ satisfy induction hypotheses. Let $v,v_2\notin A$. Then $v_3\in A$ or $v_k\in A$. If $v_3\in A$, then $G_v$, $G_v\setminus v$ and if $v_k\in A$, then $G_{v_2}$, $G_{v_2}\setminus v_2$ satisfy induction hypotheses.
			\vskip 1mm
			\noindent
			\textbf{Case 3:} Let $|A|=k-2$ with $A\cap e\neq \emptyset$. If $v\in A$, then $G_v$ and $G_v\setminus v$ satisfy induction hypotheses, and if $v\notin A$, then $G_{v_2}$ and $G_{v_2}\setminus v_2$ satisfy induction hypotheses. 
			
			If $G_v$ satisfies induction hypotheses, then set $w=v$, and if $G_{v_2}$ satisfies induction hypotheses then set $w=v_2$. Now we apply induction on $G_w$ and $G_w\setminus w$. Therefore, either $\beta_{n,n+\iv(G_w)-1}(S/J_{G_w})$ or $\beta_{n,n+\iv(G_w)}(S/J_{G_w})$ is an extremal Betti number of $S/J_{G_w}$. Also, by induction and the equation \eqref{betti-product}, either $\beta_{n+1,n+1+\iv(G_w\setminus w)-1}(S/((x_w,y_w)+J_{G_w\setminus w}))$ or $\beta_{n+1,n+1+\iv(G_w\setminus w)}(S/((x_w,y_w)+J_{G_w\setminus w}))$ is an extremal Betti number of $S/((x_w,y_w)+J_{G_w\setminus w})$. Here, it can be observed that $\iv(G)=\iv(G_w)+1=\iv(G_w\setminus w)+1$. Hence, $\beta_{n,n+j}(S/J_{G_w})=0$ for $j\geq \iv(G)$. Since $G\setminus w$ is a block graph, it follows from \cite[Theorem 1.1]{her1} and \cite[Theorem 6]{her2} that $\beta_{n,n+j}(S/((x_w,y_w)+J_{G\setminus w}))=0$ for $j\geq \iv(G)+1$. Therefore, we get from \eqref{ohtani-torn1} that:
			\begin{align}\label{ohtani-torn3}
			\Tor_{n+1}^{S}\left(\frac{S}{(x_w,y_w)+J_{G_w\setminus w}},\K\right)_{n+j} \simeq \Tor_{n}^{S}\left( \frac{S}{J_G},\K\right)_{n+j} \text{ for } j\geq \iv(G)+1.
			\end{align}
			Hence, it follows from \eqref{ohtani-torn1} and \eqref{ohtani-torn3} that either $\beta_{n,n+\iv(G)-1}(S/J_G)$ or $\beta_{n,n+\iv(G)}(S/J_G)$ is an extremal Betti number of $S/J_G$.
			\vskip 1mm
			\noindent
			\textbf{Case 4:} The last case is $A\cap e=\emptyset$ and $|A|=k-2$. Then $G_v$ and $G_v\setminus v$ are graphs such that there are trees attached to $k-2$ consecutive vertices with $v_k\in e'\cap A\setminus \{v\}$. Then by Theorem \ref{depthn+1-girth}, we have that either $\beta_{n-1,n-1+\iv(G_v)-1}(S/J_{G_v})$ or $\beta_{n-1,n-1+\iv(G_v)}(S/J_{G_v})$ is an extremal Betti number of $S/J_{G_v}$ and either $\beta_{n,n+\iv(G_v\setminus v)-1}(S/((x_v,y_v)+J_{G_v\setminus v}))$ or $\beta_{n,n+\iv(G_v\setminus v)}(S/((x_v,y_v)+J_{G_v\setminus v}))$ is an extremal Betti number of $S/((x_v,y_v)+J_{G_v\setminus v})$. Since, $\iv(G)>\iv(G_v\setminus v)$, $\beta_{n,n+j}(S/((x_v,y_v)+J_{G_v\setminus v}))=0$ for $j\geq \iv(G)$. By  \cite[Theorem 6]{her2} and \eqref{betti-product}, $\beta_{n,n+\iv(G\setminus v)+1}(S/((x_v,y_v)+J_{G\setminus v}))$ is an extremal Betti number as $G\setminus v$ is a block graph on $n-1$ vertices.
			Therefore, it follows from the long exact sequence \eqref{ohtani-tor} that: 
			\begin{align*}
			\Tor_{n}^{S}\left( \frac{S}{J_G},\K\right)_{n+j}\simeq \Tor_{n}^{S}\left( \frac{S}{(x_v,y_v)+J_{G\setminus v}},\K\right)_{n+j} \text{ for } j\geq \iv(G).
			\end{align*}
			Note that $\iv(G)=\iv(G\setminus v)+1$. Hence, $\beta_{n,n+\iv(G)}(S/J_G)$ is an extremal Betti number of $S/J_G$. Therefore, $\pd(S/J_G)\geq n$, and so we have $\depth(S/J_G)\leq n$. Hence, by Theorem \ref{depth-girth}, $\depth(S/J_G)=n$, as desired.
		\end{proof}
		\begin{remark}{\em
		Let $H=C_k\cup_e K_m$ for $k\geq 3$ and $m\geq 3$. Let $G$ be the clique sum of $H$ and a forest along some vertices of $H$. Let $A=\{u\in V(C_k): \text{there is a tree incident on u}\}$. By \cite[Theorems 3.19 and 3.20]{Arindam} and Theorem \ref{depth-girth}, $n\leq \depth(S/J_G)\leq n+1$. Moreover, it follows from Theorems \ref{depth-girth}, \ref{depth-girth3} and \ref{depthn-girth} that $A\cap e\neq \emptyset$ and $G[A]$ is connected with $k-2\leq |A|$ if and only if $\depth(S/J_G)=n+1$.
		}
		\end{remark}
		Now we consider the case $m=2$. Let $G$ be a unicyclic graph of girth $k(\geq 4)$. If there are trees attached to $k-2$ consecutive vertices, then by Theorem \ref{depth-girth}, $\depth(S/J_G)=n+1$. We now characterize unicyclic graph $G$ such that $\depth(S/J_G)=n$.
		\begin{theorem}\label{depthn-unicyclic}
			Let $G$ be a unicyclic graph of girth $k$ for $k\geq 4$. Let $A=\{u\in V(C_k): \text{there is a tree incident on }u\}$. If $A$ does not contain any $k-2$ consecutive vertices, then either $\beta_{n,n+\iv(G)-1}(S/J_G)$  or $\beta_{n,n+\iv(G)}(S/J_G)$ is an extremal Betti number of $S/J_G$. In particular, $\depth(S/J_G)=n$ and $\iv(G)-1\leq \reg(S/J_G)$.
		\end{theorem}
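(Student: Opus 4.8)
The plan is to apply Ohtani's Lemma \ref{ohtani-lemma} once, at a carefully chosen vertex of the cycle, and then to quote the results already established for clique sums of $C_k\cup_e K_m$ with $m\geq 3$ (Theorem \ref{depthn-girth}), together with the known Betti behaviour of disconnected block graphs and of generalized block graphs; so a single reduction step should suffice, with the inductions hidden inside those earlier results.

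First I would pick the vertex. Since $A$ contains no $k-2$ consecutive vertices and $k\geq 4$, we have $A\neq V(C_k)$, so there is a vertex $v\in A$ one of whose two cycle-neighbours is not in $A$; after relabelling we may assume $v=v_1$ and $v_k\notin A$, and we put $e'=\{v_2,v_k\}$. Being on $C_k$ with $k\geq4$, $v$ is not simplicial, so \eqref{ohtani-ses} and \eqref{ohtani-tor} are available. Because a tree is incident on $v$, the graph $G\setminus v$ is a disconnected block graph on $n-1$ vertices, so $\depth(S'/J_{G\setminus v})\geq n+1$ by \cite[Theorem 1.1]{her1}, whence $\pd(S/((x_v,y_v)+J_{G\setminus v}))\leq n-1$. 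Since $\deg_G(v)\geq 3$, the graphs $G_v$ and $G_v\setminus v$ are clique sums of $C_{k-1}\cup_{e'}K'$, respectively $C_{k-1}\cup_{e'}K''$, with a forest, where $C_{k-1}$ is the cycle on $\{v_2,\dots,v_k\}$, the complete graphs $K'$ and $K''$ are taken on $N_G[v]$ and $N_G(v)$, and $|K'|,|K''|\geq 3$; in both, the cycle-vertices carrying a tree form $A':=A\cap\{v_2,\dots,v_{k-1}\}$, and $v_k\notin A'$. I would then verify that either $A'\cap e'=\emptyset$ or $A'$ contains no $k-3$ consecutive vertices of $C_{k-1}$: a run of consecutive vertices of $A'$ avoids $v_k$ and so lies on the path $v_2-\cdots-v_{k-1}$ of length $k-2$, and a run of $k-3$ such vertices, taken together with $v_1\in A$ (and with $v_2$ as well if $v_2\in A$), would give $k-2$ consecutive vertices of $C_k$ inside $A$, which is impossible. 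Hence, for $k\geq 5$, both $G_v$ and $G_v\setminus v$ satisfy the hypotheses of Theorem \ref{depthn-girth}, while for $k=4$ they are generalized block graphs with no tree on a vertex of $e'$ (note $v_2,v_k\notin A$ when $k=4$), handled by \cite[Theorem 3.2]{KM-CA} and \cite[Theorem 3.4]{Arv-GBG}.

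From this I obtain $\pd(S/J_{G_v})=n$ with $\reg(S/J_{G_v})\leq\iv(G_v)$, and $\pd(S'/J_{G_v\setminus v})=n-1$ with $r':=\reg(S'/J_{G_v\setminus v})\leq\iv(G_v\setminus v)$ carrying an extremal Betti number in position $(n-1,n-1+r')$; passing through \eqref{betti-product} (the Koszul complex on $x_v,y_v$) this gives $\pd(S/((x_v,y_v)+J_{G_v\setminus v}))=n+1$ with an extremal Betti number in position $(n+1,n+1+r')$. By \cite[Lemma 3.2]{Arv-Jaco}, $\iv(G)>\iv(G_v)=\iv(G_v\setminus v)$ and $\iv(G)>\iv(G\setminus v)$; moreover a short inspection of which of $v$, $v_2$, $v_k$ and $v$'s tree-neighbours lose internal-vertex status shows $\iv(G)=\iv(G_v\setminus v)+1$ when $k\geq 5$, while for $k=4$ one has $\iv(G)=\iv(G_v\setminus v)+2$ if $v_3\notin A$ and $\iv(G)=\iv(G_v\setminus v)+1$ if $v_3\in A$; in every case $r'+1\in\{\iv(G)-1,\iv(G)\}$. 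Now I run \eqref{ohtani-tor} with $i=n$: the $S/((x_v,y_v)+J_{G\setminus v})$ and $S/J_{G_v}$ terms in homological degree $\geq n+1$ vanish, $\Tor_n^S(S/((x_v,y_v)+J_{G\setminus v}),\K)=0$, and $\beta_{n,n+j}(S/J_{G_v})=0$ for $j\geq\iv(G)$ since $\reg(S/J_{G_v})\leq\iv(G_v)\leq\iv(G)-1$. Therefore $\Tor_n^S(S/J_G,\K)_{n+j}\cong\Tor_{n+1}^S(S/((x_v,y_v)+J_{G_v\setminus v}),\K)_{n+j}$ for all $j\geq\iv(G)$, which forces $\beta_{n,n+j}(S/J_G)=0$ for $j\geq\iv(G)+1$; and the extremal Betti number of $S/((x_v,y_v)+J_{G_v\setminus v})$ in homological degree $n+1$ injects into $\Tor_n^S(S/J_G,\K)_{n+1+r'}$, giving $\beta_{n,n+(r'+1)}(S/J_G)\neq 0$. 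Since $\pd(S/J_G)\leq n$ by Theorem \ref{depth-girth}, it follows that $\beta_{n,n+\iv(G)-1}(S/J_G)$ or $\beta_{n,n+\iv(G)}(S/J_G)$ is an extremal Betti number; in particular $\pd(S/J_G)=n$, so $\depth(S/J_G)=n$, and $\reg(S/J_G)\geq\iv(G)-1$.

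The step I expect to demand the most care is the combinatorial bookkeeping in the second and third paragraphs: checking that the hypothesis ``$A$ avoids $k-2$ consecutive vertices'' is inherited by $G_v$ and $G_v\setminus v$ in the weaker form required by Theorem \ref{depthn-girth}, and computing the exact gap $\iv(G)-\iv(G_v\setminus v)\in\{1,2\}$, which is what decides between the degrees $\iv(G)-1$ and $\iv(G)$. The homological part is then routine, essentially a degenerate case of the argument used for Theorem \ref{depthn-girth}.
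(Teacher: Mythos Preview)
Your approach is essentially the same as the paper's: pick $v\in A$, apply Ohtani's lemma, observe that $G\setminus v$ is a disconnected block graph so that the $(x_v,y_v)+J_{G\setminus v}$ term has projective dimension $\le n-1$, and then feed $G_v$ and $G_v\setminus v$ into Theorem~\ref{depthn-girth} (for $k\ge 5$) or the generalized-block-graph results (for $k=4$). The paper phrases the $k\ge 5$ step as an ``induction on $k$'' but never actually invokes the inductive hypothesis there, so your single-step reduction is the same argument; your extra care in choosing $v$ with $v_k\notin A$ just streamlines the combinatorial check that $A'$ meets the hypothesis of Theorem~\ref{depthn-girth}.

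One point to tighten: you set $r':=\reg(S'/J_{G_v\setminus v})$ and assert that the extremal Betti number at maximal projective dimension sits at $(n-1,n-1+r')$, and likewise that $\reg(S/J_{G_v})\le\iv(G_v)$. Neither Theorem~\ref{depthn-girth} nor \cite[Theorem~3.4]{Arv-GBG} gives you control of the regularity; they only locate the distinguished extremal Betti number in homological degree equal to the projective dimension. What you actually need, and what those results do give, is that $\beta_{n,n+j}(S/J_{G_v})=0$ for $j>\iv(G_v)$ and that the largest $j$ with $\beta_{n-1,n-1+j}(S'/J_{G_v\setminus v})\neq 0$ lies in $\{\iv(G_v\setminus v)-1,\iv(G_v\setminus v)\}$. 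Replace the regularity statements by these top-row vanishing statements and your argument goes through verbatim.
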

		\begin{proof}
		Suppose that $A$ does not contain any $k-2$ consecutive vertices. Since $A\neq \emptyset$, we may assume that $v\in A$. Then $G\setminus v$ is a disconnected block graph, and hence by \cite[Theorem 1.1]{her1} and \eqref{betti-product}, $p=\pd(S/((x_v,y_v)+J_{G\setminus v}))\leq n-1$. We prove the theorem by induction on $k$. First assume that $k=4$. Then $v_2,v_k\notin A$, and hence $G_v$ and $G_v\setminus v$ are generalized block graphs on $n$ and $n-1$ vertices respectively. By virtue of \cite[Theorem 3.2]{KM-CA} and \eqref{betti-product}, we get that $\pd(S/J_{G_v})=n$ and $\pd(S/((x_v,y_v)+J_{G_v\setminus v}))=n+1$. Hence, by \cite[Theorem 3.4]{Arv-GBG}, $\beta_{n,n+\iv(G_v)}(S/J_{G_v})$ is an extremal Betti number of $S/J_{G_v}$ and $\beta_{n+1,n+1+\iv(G_v\setminus v)}(S/((x_v,y_v)+J_{G_v\setminus v}))$ is an extremal Betti number of $S/((x_v,y_v)+J_{G_v\setminus v})$. Since $\iv(G)>\iv(G_v)$, $\beta_{n,n+j}(S/J_{G_v})=0$ for $j\geq \iv(G)$. Thus, we have the following isomorphism from the long exact sequence \eqref{ohtani-tor}:
		\begin{align}\label{ohtani-tor-unicyclic}
		\Tor_{n+1}^{S}\left(\frac{S}{(x_v,y_v)+J_{G_v\setminus v}},\K\right)_{n+j} \simeq \Tor_{n}^{S}\left( \frac{S}{J_G},\K\right)_{n+j} \text{ for} j\geq \iv(G).
		\end{align}
		If $v_3\in A$, then $\iv(G)=\iv(G_v\setminus v)+1$, otherwise $\iv(G)=\iv(G_v\setminus v)+2$. Therefore, it follows from \eqref{ohtani-tor} and \eqref{ohtani-tor-unicyclic} that either $\beta_{n,n+\iv(G)}(S/J_G)$ or $\beta_{n,n+\iv(G)-1}(S/J_G)$ is an extremal Betti number of $S/J_G$.
		 
		Now we assume that $k\geq 5$. Let $n_v=|N_G(v)|$. Then $n_v\geq 3$. Let $H'=C_{k-1}\cup_{e'}K_{n_v+1}$ and $H''=C_{k-1}\cup_{e'}K_{n_v}$ where $e'=\{v_2,v_k\}$ and $C_{k-1}$ is a cycle on $\{v_2,\dots,v_k\}$. Therefore, $G_v$ (resp. $G_v\setminus v$ ) is the clique sum of $H'$ (resp. $H''$ ) and a forest along some vertices of $G$. It can be easily seen that for $G_v$ ( resp. $G_v\setminus v$), $A\setminus v\subset V(C_{k-1})$ is the set of vertices along which trees are attached to $H'$ (resp. $H''$). Set $A'=A\setminus v$. If $v_2,v_k\notin A'$, then $A'\cap e'=\emptyset$. Otherwise, if $A'\cap e'\neq \emptyset$, then clearly $A'$ does not contain any $k-3$ consecutive vertices. Therefore, by Theorem \ref{depthn-girth}, either $\beta_{n,n+\iv(G_v)-1}(S/J_{G_v})$ or $\beta_{n,n+\iv(G_v)}(S/J_{G_v})$ is an extremal Betti number of $S/J_{G_v}$ and either $\beta_{n+1,n+1+\iv(G_v\setminus v)-1}(S/((x_v,y_v)+J_{G_v\setminus v}))$
		or $\beta_{n+1,n+1+\iv(G_v\setminus v)}(S/((x_v,y_v)+J_{G_v\setminus v}))$ is an extremal Betti number of $S/((x_v,y_v)+J_{G_v\setminus v})$. Note that $\iv(G)=\iv(G_v\setminus v)+1$. Now the assertion follows from \eqref{ohtani-tor} and \eqref{ohtani-tor-unicyclic}.
		\end{proof}
		Therefore, from Theorems \ref{depth-girth} and \ref{depthn-unicyclic}, we can conclude the following result for unicyclic graphs. 
		\begin{corollary}\label{depth-unicyclic}
			Let $G$ be a unicyclic graph on the vertex set $[n]$ of girth $k\geq 4$. Then $n\leq \depth(S/J_G)\leq n+1$. Moreover, if there are trees attached to $k-2$ consecutive vertices of the cycle in $G$, then $\depth(S/J_G)=n+1$, otherwise $\depth(S/J_G)=n$.
		\end{corollary}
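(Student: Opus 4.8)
The plan is to assemble the corollary from the Banerjee--Betancourt upper bound together with Theorems \ref{depth-girth} and \ref{depthn-unicyclic}, after first separating off the degenerate case $G=C_k$. Since a unicyclic graph $G$ is connected, $\kappa(G)\ge 1$, so \cite[Theorems 3.19 and 3.20]{Arindam} immediately gives $\depth(S/J_G)\le n+1$; this settles the upper bound and leaves only the lower bound and the trichotomy to prove.

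For the case $G=C_k$ I would simply invoke Zafar's theorem \cite[Theorem 4.5]{ZafarBMS}, which gives $\depth(S/J_G)=n$. As a cycle carries no attached trees, the associated set $A$ of tree-bearing vertices is empty, so ``there are trees attached to $k-2$ consecutive vertices of the cycle'' fails, and the corollary's conclusion for this case is exactly $\depth(S/J_G)=n$, which is what we obtained. Hence we may assume $n>k$, i.e. $G$ is a genuine unicyclic graph with $A\neq\emptyset$.

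In that case I would present $G$ as a clique sum of $H=C_k=C_k\cup_e K_2$ (the case $m=2$ of the set-up, in which $H$ collapses to $C_k$ itself) and a forest along some vertices of $C_k$; this is precisely the situation of Theorem \ref{depth-girth}, whose first conclusion gives $\depth(S/J_G)\ge n$ and hence $\depth(S/J_G)\in\{n,n+1\}$. For the trichotomy: if there are trees attached to $k-2$ consecutive vertices of $C_k$, then since $k\ge 4$ these $k-2\ge 2$ consecutive vertices span an edge of $C_k$; I would take that edge as the edge $e$ of the clique-sum description, so that $A\cap e\neq\emptyset$ while $A$ still contains $k-2$ consecutive vertices. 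The second conclusion of Theorem \ref{depth-girth} then yields $\depth(S/J_G)=n+1$. Conversely, if $A$ does not contain any $k-2$ consecutive vertices, then (using $A\neq\emptyset$) Theorem \ref{depthn-unicyclic} applies directly and gives $\depth(S/J_G)=n$. Assembling the three cases gives the statement.

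The only point requiring care is the bookkeeping around the edge $e$: Theorem \ref{depth-girth} is stated with a \emph{fixed} edge $e$ of $C_k$, and when $m=2$ this edge is not canonical, so one must choose it to lie among the $k-2$ consecutive tree-bearing vertices in order to meet the hypothesis $A\cap e\neq\emptyset$. Once that choice is made, the proof is a direct citation of the two theorems and of \cite{Arindam}, so I do not anticipate any genuine obstacle.
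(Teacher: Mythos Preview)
Your proposal is correct and follows essentially the same route as the paper, which simply records that the corollary is an immediate consequence of Theorems \ref{depth-girth} and \ref{depthn-unicyclic} (together with the Banerjee--Betancourt upper bound already noted earlier in the section). Your added care in handling the degenerate case $G=C_k$ via \cite[Theorem 4.5]{ZafarBMS}, and in explicitly choosing the edge $e$ inside the block of $k-2$ consecutive tree-bearing vertices so that the hypothesis $A\cap e\neq\emptyset$ of Theorem \ref{depth-girth} is met, makes the deduction fully explicit but does not change the argument.
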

		Also, we can conclude from Theorems \ref{depthn+1-girth} and \ref{depthn-unicyclic} that.
			\begin{corollary}\label{betti-unicyclic}
				Let $G$ be a unicyclic graph of girth $k\geq 4$ with $\pd(S/J_G)=p$. If trees are attached to every vertex of the cycle in $G$, then $\beta_{p,p+\iv(G)+1}(S/J_G)$ is an extremal Betti number of $S/J_G$, and hence $\iv(G)+1\leq \reg(S/J_G)$. Otherwise, either $\beta_{p,p+\iv(G)-1}(S/J_G)$ or $\beta_{p,p+\iv(G)}(S/J_G)$ is an extremal Betti number of $S/J_G$, and hence $\iv(G)-1\leq \reg(S/J_G)$.
			\end{corollary}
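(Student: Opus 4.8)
The plan is to read this off directly from Theorem~\ref{depth-girth}, Theorem~\ref{depthn+1-girth} and Theorem~\ref{depthn-unicyclic}, together with the Auslander--Buchsbaum equality $\pd(S/J_G)+\depth(S/J_G)=2n$. The key observation is that a unicyclic graph $G$ of girth $k$ with $n>k$ is precisely a clique sum of $C_k=C_k\cup_e K_2$ with a forest along some of its vertices, so those results apply with $m=2$ and $H=C_k$; here $e$ is a free parameter that we may choose to be any edge of $C_k$. Throughout we write $A=\{u\in V(C_k):\text{a tree is incident on }u\}$, which is nonempty because $G$ is not a cycle, and, in line with the standing assumption of the section, we may take $G$ to be indecomposable.

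First I would dispose of the case where trees are attached to every vertex of the cycle, i.e.\ $A=V(C_k)$. Then $A$ contains $k-2$ consecutive vertices and $A\cap e\neq\emptyset$ for every edge $e$ of $C_k$, so Theorem~\ref{depth-girth} gives $\depth(S/J_G)=n+1$ and hence $p=\pd(S/J_G)=n-1$. The $A=V(C_k)$ branch of Theorem~\ref{depthn+1-girth} (with $m=2$, $H=C_k$) then says that $\beta_{n-1,n-1+\iv(G)+1}(S/J_G)=\beta_{p,p+\iv(G)+1}(S/J_G)$ is an extremal Betti number; being nonzero, it forces $\reg(S/J_G)\geq\iv(G)+1$.

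Next I would handle $A\subsetneq V(C_k)$, distinguishing two subcases. If $A$ contains $k-2$ consecutive vertices, I would choose $e$ to be an edge joining two consecutive vertices of $A$ (possible since $|A|\geq k-2\geq 2$), so that $A\cap e\neq\emptyset$; then Theorem~\ref{depth-girth} again gives $\depth(S/J_G)=n+1$, hence $p=n-1$, and the first assertion of Theorem~\ref{depthn+1-girth} shows that one of $\beta_{n-1,n-1+\iv(G)-1}(S/J_G),\ \beta_{n-1,n-1+\iv(G)}(S/J_G)$ is extremal, i.e.\ one of $\beta_{p,p+\iv(G)-1}(S/J_G),\ \beta_{p,p+\iv(G)}(S/J_G)$ is. If instead $A$ contains no $k-2$ consecutive vertices, then Theorem~\ref{depthn-unicyclic} gives $\depth(S/J_G)=n$, hence $p=n$, and that one of $\beta_{n,n+\iv(G)-1}(S/J_G),\ \beta_{n,n+\iv(G)}(S/J_G)$ --- again one of $\beta_{p,p+\iv(G)-1}(S/J_G),\ \beta_{p,p+\iv(G)}(S/J_G)$ --- is extremal. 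In both subcases this yields $\reg(S/J_G)\geq\iv(G)-1$, and the two subcases together cover the ``otherwise'' clause.

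I do not expect a genuine obstacle here; the work is purely in matching hypotheses. The points to watch are: that for $m=2$ the auxiliary clique is $K_2$, so $H=C_k$ and the edge $e$ can be chosen freely (this is what makes the ``$A\cap e\neq\emptyset$'' hypothesis of Theorems~\ref{depth-girth} and~\ref{depthn+1-girth} costless); that the depth values translate into the stated homological position $p$ via Auslander--Buchsbaum over the $2n$-variable ring $S$; and that one must keep the standing hypothesis $n>k$ in force, since for $G=C_k$ itself the (unique) extremal Betti number lies in degree $\iv(G)-2=k-2$ by Zafar--Zahid and the assertion would otherwise fail.
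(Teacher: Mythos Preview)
Your proposal is correct and follows precisely the route the paper intends: the corollary is stated as an immediate consequence of Theorems~\ref{depthn+1-girth} and~\ref{depthn-unicyclic} (together with Theorem~\ref{depth-girth} to pin down the depth, hence $p$, via Auslander--Buchsbaum), and you have supplied exactly the case split and hypothesis-matching that makes this work. The only detail you added beyond what the paper makes explicit is the freedom to choose the edge $e$ when $m=2$, which is indeed what makes the ``$A\cap e\neq\emptyset$'' hypothesis available in the $k-2$-consecutive case; this is correct and implicit in the paper's setup.
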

\section{Cycles with whiskers}\label{whisker}
Let $G$ be a graph with the vertex set $V(G)$ and $v\in V(G)$. Let $u_1,\dots,u_r$ be new vertices and we define $G\cup W^r(v)$ to be the graph with vertex set $V(G\cup W^r(v))=V(G)\cup \{u_1,\dots,u_r\}$ and edge set $E(G\cup W^r(v))=E(G)\cup \{\{u_i,v\}: 1\leq i\leq r\}$ i.e., $G\cup W^r(v)$ is the graph obtained from $G$ by attaching $r$ whiskers or pendant edges at the vertex $v$. If $r=0$, then $G\cup W^0(v)=G$.
Let $v_1,\dots,v_s\in V(G)$. Then in a similar way, we can attach $r_i$ whiskers at $v_i$ for $1\leq i\leq s$. We denote this graph by $G\cup (\cup_{i=1}^{s} W^{r_i}(v_i))$. Algebraic effect of attaching whiskers to a graph has already been studied for the case of monomial edge ideals, see \cite{Tai-Whiskers}, \cite{Fakhari-Whiskers} and \cite{Villarreal-Whiskers}. The algebraic effect of attaching whiskers to a graph has been studied also for binomial edge ideals. You can see, for instance, \cite{CDI16,Rinaldo-BMS,Rinaldo-Cactus,Zafar}. Here we study the binomial edge ideals of graphs with whiskers attached.

\vskip 2mm
\noindent
\begin{minipage}{\linewidth}
	\begin{minipage}{.4\linewidth}
		The graph $G$, given on the right, is $G=C_5\cup W^2(v_1)\cup W^2(v_2)\cup W^1(v_5)$.
	\end{minipage}
	\begin{minipage}{.6\linewidth}
		\captionsetup[figure]{labelformat=empty}
		\begin{figure}[H]
\begin{tikzpicture}[scale=1]
\draw (4.04,-2.14)-- (3.5,-2.66);
\draw (4.64,-2.68)-- (4.04,-2.14);
\draw (3.5,-2.66)-- (3.5,-3.43);
\draw (3.5,-3.43)-- (4.66,-3.43);
\draw (4.66,-3.43)-- (4.64,-2.68);
\draw (5.27,-2.58)-- (4.64,-2.68);
\draw (4.64,-2.68)-- (5.05,-2.14);
\draw (2.85,-2.56)-- (3.5,-2.66);
\draw (3.7,-1.7)-- (4.04,-2.14);
\draw (4.46,-1.62)-- (4.04,-2.14);
\begin{scriptsize}
\fill (4.04,-2.14) circle (1.5pt);
\fill (5.05,-2.14) circle (1.5pt);
\draw (4.1,-2.4) node {$v_1$};
\fill (3.5,-2.66) circle (1.5pt);
\draw (3.7,-2.75) node {$v_5$};
\fill (4.64,-2.68) circle (1.5pt);
\draw (4.4,-2.8) node {$v_2$};
\fill (3.5,-3.43) circle (1.5pt);
\draw (3.7,-3.27) node {$v_4$};
\fill (4.66,-3.43) circle (1.5pt);
\draw (4.45,-3.27) node {$v_3$};
\fill (2.85,-2.56) circle (1.5pt);
\fill (5.27,-2.58) circle (1.5pt);
\fill (3.7,-1.7) circle (1.5pt);
\fill (4.46,-1.62) circle (1.5pt);
\end{scriptsize}
\end{tikzpicture}
\caption{$G$}
\end{figure}
\end{minipage}
\end{minipage}

Let $G=C_k\cup (\cup_{i=1}^{k} W^{r_i}(v_i))$ for $r_i\geq 0$. Then $n=k+\sum_{i=1}^{k}r_i$. Let $A=\{v_i\in V(C_k): r_i\geq 1\}$. If $A=\emptyset$, then $G=C_k$ and in this case, Zafar and Zahid \cite[Corollary 16]{Zafar} proved that $\reg(S/J_G)=k-2$. So, in the rest of the section we assume that $A\neq \emptyset$. If $k=3$, then $1\leq \iv(G)\leq 3$, and hence by \cite[Theorem 8]{her2}, $2\leq \reg(S/J_G)=1+\iv(G)\leq 4$. In this section, we generalize their result for $k\geq 4$ and prove that the regularity of $S/J_G$ is bounded below by $k-1$ and bounded above by $k+1$. We then characterize graphs $G$ with $\reg(S/J_G)=k-1$, $\reg(S/J_G)=k$ and $\reg(S/J_G)=k+1$. We also classify $G$ which admits a unique extremal Betti number.
\begin{theorem}\label{reg-whisker-cliquesum}
	Let $H=K_m\cup_{e} C_k$ for $m\geq 2$, $k\geq 3$. Let $G=H\cup (\cup_{i=1}^{k} W^{r_i}(v_i))$ for $r_i\geq 0$, and suppose that $A=\{v_i\in V(C_k): r_i\geq 1\}$. If $A\neq \emptyset$, then $k-1\leq \reg(S/J_G)\leq k+1$.
\end{theorem}
\begin{proof}
	Note that $G$ contains an induced path of length $k-1$. Then the lower bound follows from \cite[Corollary 2.3]{MM}. Let $e=\{v,v_2\}$. Then $G\setminus v$ is the graph $K_{m-1}\cup_{v_2}P_{k-1}\cup
	(\cup_{i=2}^{k} W^{r_i}(v_i))$ with $r_1$ isolated vertices, where $V(P_{k-1})=\{v_2,\dots,v_k\}$. Since $\iv(G\setminus v)\leq k-1$, it follows from \cite[Theorem 8]{her2} and \eqref{betti-product} that $\reg(S/((x_v,y_v)+J_{G\setminus v}))\leq k$. We prove the upper bound by induction on $k$. Assume that $k=3$. If $m=2$, then the assertion follows from \cite[Theorem 8]{her2}. Suppose now that $m\geq 3$. Since $v$ is an internal vertex, by Lemma \ref{ohtani-lemma}, $J_G=J_{G_v}\cap ((x_v,y_v)+J_{G\setminus v})$, where $G_v=K_{m+r_1+1}\cup W^{r_2}(v_2)\cup W^{r_3}(v_3)$. Therefore, $G_v\setminus v=K_{m+r_1}\cup W^{r_2}(v_2)\cup W^{r_3}(v_3)$. Hence, by \cite[Theorem 8]{her2}, $\reg(S/J_{G_v})=\iv(G_v)+1=\iv(G_v\setminus v)+1=\reg(S/((x_v,y_v)+J_{G_v\setminus v}))\leq 3$. We apply Lemma \ref{regularity-lemma} on the short exact sequence \eqref{ohtani-ses} to get that $\reg(S/J_G)\leq 4$. If $A=\{v\}$, then $G_v=K_{m+r_1+1}$, $G_v\setminus v=K_{m+r_1}$ and $G\setminus v=K_{m-1}\cup _{v_2}P_{2}$ with $r_1$ isolated vertices. Therefore, $\reg(S/J_{G_v})=1=\reg(S/((x_v,y_v)+J_{G_v\setminus v}))$ and $\reg(S/((x_v,y_v)+J_{G\setminus v}))=2$. Thus it follows from Lemma \ref{regularity-lemma} and the short exact sequence \eqref{ohtani-ses} that $\reg(S/J_G)=2$.
	
	Now assume that $k\geq 4$. Note that $G_v=K_{m+r_1+1}\cup_{\{v_2,v_k\}}C_{k-1}\cup
	(\cup_{i=2}^{k} W^{r_i}(v_i))$ and $G_v\setminus v=K_{m+r_1}\cup_{\{v_2,v_k\}}C_{k-1}\cup
	(\cup_{i=2}^{k} W^{r_i}(v_i))$, where $V(C_{k-1})=\{v_2,\dots,v_k\}$. If $A=\{v\}$, then $G_v=K_{m+r_1+1}\cup_{\{v_2,v_k\}}C_{k-1}$ and $G_v\setminus v=K_{m+r_1}\cup_{\{v_2,v_k\}}C_{k-1}$ . Thus, by \cite[Theorem 3.12]{JAR2}, $\reg(S/J_{G_v})=\reg(S/((x_v,y_v)+J_{G_v\setminus v}))=k-2$. Also, in this case $G\setminus v=K_{m-1}\cup_{v_2}P_{k-1}$ with $r_1$ isolated vertices, and hence by Proposition \ref{betti-decomposition} and \eqref{betti-product}, $\reg(S/((x_v,y_v)+J_{G\setminus v}))\leq k-1$. Therefore, by Lemma \ref{regularity-lemma} and the short exact sequence \eqref{ohtani-ses}, we have that $\reg(S/J_G)\leq k-1$. Hence, if $A=\{v\}$, then $\reg(S/J_G)=k-1$. Let $v_i\in A$ for some $2\leq i\leq k$. Then by induction, $\reg(S/J_{G_v})\leq k$ and $\reg(S/((x_v,y_v)+J_{G_v\setminus v}))\leq k$. Thus, Lemma \ref{regularity-lemma} and the short exact sequence \eqref{ohtani-ses} together imply that $\reg(S/J_G)\leq k+1$.
	\end{proof}
	Considering $m=2$ in Theorem \ref{reg-whisker-cliquesum}, we obtain bounds for cycles with whiskers.
	\begin{corollary}\label{reg-whisker}
		Let $k\geq 3$ and $G=C_k\cup (\cup_{i=1}^{k} W^{r_i}(v_i))$, $r_i\geq 0$. Let $A=\{v_i\in V(C_k): r_i\geq 1\}$. If $A\neq \emptyset$, then $k-1\leq \reg(S/J_G)\leq k+1$. Moreover, if $|A|=1$, then $\reg(S/J_G)=k-1$.
	\end{corollary}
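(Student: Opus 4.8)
The plan is to derive this as the specialization $m=2$ of Theorem~\ref{reg-whisker-cliquesum}. The only thing to notice first is that gluing a copy of $K_2$ to $C_k$ along an edge $e$ does nothing, i.e. $K_2\cup_e C_k=C_k$; so when $m=2$ the graph $H$ in Theorem~\ref{reg-whisker-cliquesum} is $C_k$ itself and $G=H\cup (\cup_{i=1}^{k} W^{r_i}(v_i))$ is precisely the cycle-with-whiskers $C_k\cup (\cup_{i=1}^{k} W^{r_i}(v_i))$ of the statement. Since $A\neq\emptyset$, Theorem~\ref{reg-whisker-cliquesum} applies verbatim and gives $k-1\leq\reg(S/J_G)\leq k+1$.

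Next I would establish the refinement $\reg(S/J_G)=k-1$ when $|A|=1$. The inequality $\reg(S/J_G)\geq k-1$ is contained in the bound just obtained; alternatively, a whisker $u$ at the unique vertex $v_j\in A$ together with the $k-1$ consecutive cycle vertices $v_j,v_{j+1},\dots,v_{j+k-2}$ (indices modulo $k$) forms an induced path on $k$ vertices, so $\reg(S/J_G)\geq k-1$ by \cite[Corollary~2.3]{MM}. For the opposite inequality I would invoke the cyclic automorphism of $C_k$ to relabel the vertices so that the unique whiskered vertex becomes $v=v_1$, an endpoint of the distinguished edge; then $A=\{v\}$, and this is exactly the situation already analyzed inside the proof of Theorem~\ref{reg-whisker-cliquesum}, where $\reg(S/J_G)=k-1$ is proved. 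Concretely, for $k=3$ the graph $G=C_3\cup W^{r_1}(v_1)$ is a block graph with $\iv(G)=1$, so $\reg(S/J_G)=2$ by \cite[Theorem~8]{her2}; for $k\geq 4$ the vertex $v$ is not simplicial, and in the short exact sequence \eqref{ohtani-ses} for the pair $(G,v)$ one has that $G\setminus v$ is a path $P_{k-1}$ together with isolated vertices, of regularity $k-2$, while $G_v$ and $G_v\setminus v$ are clique sums of the cycle $C_{k-1}$ with a complete graph on at least three vertices, each of regularity $k-2$ by \cite[Proposition~3.12]{JAR2}; feeding these into Lemma~\ref{regularity-lemma}(1) yields $\reg(S/J_G)\leq\max\{k-2,(k-2)+1\}=k-1$. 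Combining the two inequalities gives $\reg(S/J_G)=k-1$.

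Since every ingredient is already supplied by Theorem~\ref{reg-whisker-cliquesum} and its proof, I do not expect a genuine obstacle. The one point that must be spelled out is the symmetry reduction: a cycle carrying whiskers at a single vertex can always be relabelled so that this vertex sits in the position $v$, which is what lets the $A=\{v\}$ branch of the earlier argument be quoted directly.
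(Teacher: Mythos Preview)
Your argument is correct and follows the same route as the paper: the paper's own proof is the single line ``Considering $m=2$ in Theorem~\ref{reg-whisker-cliquesum}, we obtain bounds for cycles with whiskers,'' and you have simply made explicit what this entails, including the symmetry relabelling needed to place the unique whiskered vertex at $v=v_1$ so that the $A=\{v\}$ branch inside the proof of Theorem~\ref{reg-whisker-cliquesum} applies.
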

	We now characterize $G$ with $\reg(S/J_G)=k+1$. First, we prove the following Proposition.
	\begin{proposition}\label{reg-whiskerk-1_k}
		Let $H=K_m\cup_{e} C_k$ for $m\geq 2$, $k\geq 3$. Let $G=H\cup (\cup_{i=1}^{k} W^{r_i}(v_i))$ for $r_i\geq 0$, and suppose that $A=\{v_i\in V(C_k): r_i\geq 1\}$. If $e\nsubseteq A$, then $\reg(S/J_G)\leq k$.
	\end{proposition}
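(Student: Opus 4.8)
The plan is an induction on $k$ via Ohtani's Lemma \ref{ohtani-lemma} and the short exact sequence \eqref{ohtani-ses} for a carefully chosen vertex. If $A=\emptyset$ there is nothing to prove: then $G=K_m\cup_e C_k$ and $\reg(S/J_G)\le k-1$ by \cite[Proposition 3.12]{JAR2} (for $m\ge 3$) or \cite[Corollary 16]{Zafar} (for $m=2$). The block-graph case $k=3$, $m=2$ is also immediate, since then $G=K_3\cup(\cup_i W^{r_i}(v_i))$ is a block graph with $\iv(G)=|A|\le 2$, so $\reg(S/J_G)=\iv(G)+1\le 3$ by \cite[Theorem 8]{her2}. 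So assume $A\ne\emptyset$ and, using the reflection symmetry of $C_k$ fixing $e$, that $v_1\notin A$ (i.e.\ $r_1=0$).

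The decisive step is the choice of the vertex to delete: I take $w$ to be a cycle-neighbour of $v_1$, namely $w=v_k$ when $k\ge 4$ and $w=v_2$ when $k=3$ (so $m\ge 3$ there). For $k\ge 4$ the two cycle-neighbours of $w$ are distinct and nonadjacent in $G$, and for $k=3$ the vertex $v_2$ lies in two maximal cliques of $G$ when $m\ge 3$; in either case $w$ is not simplicial, so \eqref{ohtani-ses} applies. Forming $G_w$ replaces the cycle $C_k$ by the cycle $C_{k-1}$ on $V(C_k)\setminus\{w\}$ in which the two cycle-neighbours of $w$ become joined by an edge $e'$, so that
\[
G_w=K_{m'}\cup_{e'}C_{k-1}\cup\bigl(\cup_i W^{r_i}(v_i)\bigr),\qquad
G_w\setminus w=K_{m'-1}\cup_{e'}C_{k-1}\cup\bigl(\cup_i W^{r_i}(v_i)\bigr),
\]
with $m'=|N_G(w)|+1\ge 3$, the whiskered set of both being $A'=A\setminus\{w\}\subseteq V(C_{k-1})$. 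The reason for taking $w$ adjacent to $v_1$ is exactly that $v_1\in e'$ while $v_1\notin A'$, so $e'\nsubseteq A'$: the hypothesis of the Proposition passes to the smaller graphs.

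The induction now closes. When $k=3$ the two graphs above are block graphs (the degenerate $C_2$ being an edge inside the clique) with $\iv\le 1$, and when $k\ge 4$ the induction hypothesis applied to $C_{k-1}$ — or \cite[Proposition 3.12]{JAR2}/\cite[Corollary 16]{Zafar} if $A'=\emptyset$ — gives $\reg(S/J_{G_w})\le k-1$ and $\reg(S/J_{G_w\setminus w})\le k-1$; hence $\reg(S/((x_w,y_w)+J_{G_w\setminus w}))\le k-1$ by \eqref{betti-product}. Moreover $G\setminus w$ is a block graph with $\iv(G\setminus w)\le k-1$, so $\reg(S/((x_w,y_w)+J_{G\setminus w}))\le k$ by \cite[Theorem 8]{her2} and \eqref{betti-product}. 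Applying Lemma \ref{regularity-lemma}(1) to \eqref{ohtani-ses},
\[
\reg(S/J_G)\le\max\bigl\{\reg(S/((x_w,y_w)+J_{G\setminus w})),\ \reg(S/J_{G_w}),\ \reg(S/((x_w,y_w)+J_{G_w\setminus w}))+1\bigr\}\le k.
\]

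The heart of the matter is obtaining the sharp estimate $\reg(S/J_{G_w\setminus w})\le k-1$ rather than the weaker $\le k$ that Theorem \ref{reg-whisker-cliquesum} alone gives: without this extra unit the last term above would be $k+1$ and the bound would fail. This is precisely why $w$ must be a cycle-neighbour of a non-whiskered cycle vertex — only then does $e'\nsubseteq A'$ hold, allowing the (inductive form of the) present Proposition, and not merely Theorem \ref{reg-whisker-cliquesum}, to be applied to $G_w\setminus w$. The only remaining points needing care are the routine verifications that such a non-simplicial $w$ always exists (and lies in $e$ when $k=3$), that $G\setminus w$ is a block graph with $\iv\le k-1$, and that $G_w$ genuinely has the displayed clique-sum form.
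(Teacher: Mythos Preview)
There is a genuine gap when $m\ge 3$ and $k\ge 4$. Your displayed form of $G_w$ is wrong in that range: with $w=v_k$ you have $w\notin e=\{v_1,v_2\}$, so $N_G[v_k]=\{v_{k-1},v_k,v_1\}\cup\{\text{whiskers at }v_k\}$ does \emph{not} contain $V(K_m)$. Hence the original $K_m$ is not absorbed into the new clique; rather $G_w$ carries \emph{two} cliques glued to $C_{k-1}$, namely $K_m$ along $e=\{v_1,v_2\}$ and the new $K_{r_k+3}$ along $e'=\{v_{k-1},v_1\}$, together with the surviving whiskers. This is not of the shape $K_{m'}\cup_{e'}C_{k-1}\cup(\text{whiskers})$, so the induction hypothesis does not apply to $G_w$ or to $G_w\setminus w$. (For $m=2$ your description is correct, since then $K_m$ is just the edge $e$ itself.) The point you flagged as a ``routine verification'' --- that $G_w$ genuinely has the displayed clique-sum form --- is precisely where the argument fails.

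The remedy, which is exactly what the paper does, is to take the \emph{other} cycle-neighbour of $v_1$: set $w=v_2$ for every $k$ (keeping your separate treatment of $k=3$, $m=2$). Since $v_2\in e$, one has $V(K_m)\subseteq N_G[v_2]$, so $K_m$ is absorbed and
\[
G_{v_2}=K_{m+r_2+1}\cup_{\{v_1,v_3\}}C_{k-1}\cup\bigl(\textstyle\bigcup_{i\ne 2}W^{r_i}(v_i)\bigr),
\]
which is genuinely of the required form, with $v_1$ in the new edge $e''=\{v_1,v_3\}$ and $v_1\notin A$, so $e''\nsubseteq A\setminus\{v_2\}$ and the induction goes through. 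Your key insight --- that $w$ must be a cycle-neighbour of a non-whiskered vertex of $e$ so that the new edge misses $A'$ --- is right; the missing constraint is that $w$ must also lie in $e$, so that $K_m$ gets swallowed by the new clique. With that single change your argument coincides with the paper's (up to the relabelling $v_1\leftrightarrow v_2$).
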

	\begin{proof}
		Let $e=\{v,v_2\}$. We assume that $v_2\notin A$, i.e., $r_2=0$. We proceed by induction on $k$. Note that $G\setminus v=K_{m-1}\cup_{v_2}P_{k-1}\cup (\cup_{i=3}^{k} W^{r_i}(v_i))$ with $r_1$ isolated vertices. Hence by \cite[Theorem 8]{her2} and \eqref{betti-product}, $\reg(S/((x_v,y_v)+J_{G\setminus v}))\leq k$. For $k=3$, $G_v=K_{m+r_1+1}\cup W^{r_3}(v_3)$ and $G_v\setminus v=K_{m+r_1}\cup W^{r_3}(v_3)$. Then by virtue of \cite[Theorem 8]{her2}, $\reg(S/J_{G_v})=\reg(S/((x_v,y_v)+J_{G_v\setminus v}))\leq 2$. Therefore, by applying Lemma \ref{regularity-lemma} on the short exact sequence \eqref{ohtani-ses}, we get $\reg(S/J_G)\leq 3$.
		Now suppose that $k\geq 4$. Then $G_v=K_{m+r_1+1}\cup_{\{v_2,v_k\}}C_{k-1}\cup(\cup_{i=3}^{k} W^{r_i}(v_i))$ and $G_v\setminus v=K_{m+r_1}\cup_{\{v_2,v_k\}}C_{k-1}\cup(\cup_{i=3}^{k} W^{r_i}(v_i))$. Hence by induction, $\reg(S/J_{G_v})\leq k-1$ and $\reg(S/((x_v,y_v)+J_{G_v\setminus v}))\leq k-1$. Therefore, it follows from Lemma \ref{regularity-lemma} and the short exact sequence \eqref{ohtani-ses} that $\reg(S/J_G)\leq k$.
	\end{proof}
	\begin{corollary}\label{reg-whiskerk+1}
		Let $k\geq 3$ and $G=C_k\cup (\cup_{i=1}^{k} W^{r_i}(v_i))$, $r_i\geq 0$. Let $A=\{v_i\in V(C_k): r_i\geq 1\}$. Then $A=V(C_k)$ if and only if $\reg(S/J_G)=k+1$. Moreover, in this case, $S/J_G$ admits a unique extremal Betti number.
	\end{corollary}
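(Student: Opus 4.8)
The plan is to establish both implications of the equivalence and then the uniqueness statement, relying on Corollaries~\ref{reg-whisker} and~\ref{betti-unicyclic} and on Proposition~\ref{reg-whiskerk-1_k}.

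First I would prove ``$\reg(S/J_G)=k+1\implies A=V(C_k)$'' by contraposition. Suppose $A\neq V(C_k)$ and choose $v_j\in V(C_k)$ with $r_j=0$, and let $e$ be an edge of $C_k$ incident to $v_j$. Viewing $G$ as $(K_2\cup_e C_k)\cup(\cup_{i=1}^k W^{r_i}(v_i))$ with distinguished edge $e$, we have $v_j\in e$ and $v_j\notin A$, so $e\nsubseteq A$; hence Proposition~\ref{reg-whiskerk-1_k}, applied with $m=2$, gives $\reg(S/J_G)\leq k$, so in particular $\reg(S/J_G)\neq k+1$.

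Next I would prove ``$A=V(C_k)\implies \reg(S/J_G)=k+1$''. By Corollary~\ref{reg-whisker} we already know $\reg(S/J_G)\leq k+1$, so it suffices to show $\reg(S/J_G)\geq k+1$, and at the same time I will exhibit a nonzero Betti number in degree $(p,p+k+1)$ with $p=\pd(S/J_G)$. The first observation is that $\iv(G)=k$: every cycle vertex $v_i$ lies in at least two maximal cliques (two consecutive cycle edges when $k\geq 4$, or the triangle together with a whisker edge when $k=3$, using $r_i\geq 1$), while every whisker leaf is simplicial, so the internal vertices of $G$ are exactly $v_1,\dots,v_k$. For $k\geq 4$, $G$ is a unicyclic graph of girth $k$ with a tree (the star of whiskers) attached at every vertex of the cycle, so Corollary~\ref{betti-unicyclic} gives that $\beta_{p,p+\iv(G)+1}(S/J_G)=\beta_{p,p+k+1}(S/J_G)\neq 0$ and $\reg(S/J_G)\geq \iv(G)+1=k+1$; combined with the upper bound this forces $\reg(S/J_G)=k+1$. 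For $k=3$, $G$ is a block graph with $\iv(G)=3$, so $\reg(S/J_G)=\iv(G)+1=4=k+1$ by \cite[Theorem 8]{her2}, and $\beta_{p,p+\iv(G)+1}(S/J_G)\neq 0$ by \cite[Theorem 6]{her2}.

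Finally, for uniqueness: in both cases the Betti number produced is $\beta_{\pd(S/J_G),\,\pd(S/J_G)+\reg(S/J_G)}(S/J_G)\neq 0$, which is precisely the condition recalled in the introduction for $S/J_G$ to admit a unique extremal Betti number. I expect the only mildly delicate point to be pinning down the identity $\iv(G)=k$ and checking that Corollary~\ref{betti-unicyclic} applies verbatim here, i.e.\ that attaching several whiskers at a vertex still counts as ``attaching a tree'' there; once that is settled, the bounds $\reg(S/J_G)\leq k+1$ and $\reg(S/J_G)\geq k+1$ simply pinch the value.
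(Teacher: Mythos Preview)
Your proposal is correct and follows essentially the same approach as the paper. The paper invokes Theorem~\ref{depthn+1-girth} directly (with the identity $\iv(G)=k$ left implicit) where you invoke its consequence Corollary~\ref{betti-unicyclic} and spell out $\iv(G)=k$; both then use Proposition~\ref{reg-whiskerk-1_k} for the converse and the upper bound from Corollary~\ref{reg-whisker} to pinch the regularity.
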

	\begin{proof}
		First, we assume that whiskers are attached at every vertex of $C_k$ i.e., $r_i\geq 1$ for all $1\leq i\leq k$. For $k=3$, by \cite[Theorem 8]{her2}, we have that $\beta_{n-1,n-1+4}(S/J_G)$ is the unique extremal Betti number. For $k\geq 4$, by Theorem \ref{depthn+1-girth}, $\beta_{n-1,n-1+k+1}(S/J_G)$ is an extremal Betti number, which further implies that $k+1\leq \reg(S/J_G)$. By Corollary \ref{reg-whisker}, $\reg(S/J_G)\leq k+1$. Hence, $\reg(S/J_G)=k+1$ and $S/J_G$ admits a unique extremal Betti number. For the converse part, suppose there exists $i\in [k]$ such that $r_i=0$. Then by Proposition \ref{reg-whiskerk-1_k}, $\reg(S/J_G)\leq k$, which is a contradiction. Hence, the assertion follows.
	\end{proof}
		If $\emptyset \neq A\subsetneq V(C_k)$, then by Corollary \ref{reg-whisker} and Proposition \ref{reg-whiskerk-1_k}, $k-1\leq \reg(S/J_G)\leq k$. We now characterize $G$ with $\reg(S/J_G)=k-1$ and $\reg(S/J_G)=k$.
		\begin{theorem}\label{reg-whiskerk-1}
			Let $k\geq 4$ and $G=C_k\cup (\cup_{i=1}^{k} W^{r_i}(v_i))$ for $r_i\geq 0$. Let $A=\{v_i\in V(C_k): r_i\geq 1\}$. If either $|A|=1$ or $|A|=2$ and vertices of $A$ are adjacent, then $\reg(S/J_G)=k-1$. Moreover, in this case, $S/J_G$ admits a unique extremal Betti number.
		\end{theorem}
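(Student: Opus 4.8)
The plan is to split into the two cases of the hypothesis. When $|A|=1$ the equality $\reg(S/J_G)=k-1$ is already Corollary~\ref{reg-whisker}, so the substantive case is $|A|=2$ with the two vertices of $A$ adjacent. By the cyclic symmetry of $C_k$ I may relabel so that $A=\{v_1,v_2\}$ with $\{v_1,v_2\}\in E(C_k)$ and $r_1,r_2\ge 1$. Since $v_1$ belongs to the distinct maximal cliques $\{v_1,v_2\}$, $\{v_1,v_k\}$ and $\{v_1,u\}$ (with $u$ a whisker at $v_1$), it is not simplicial, so Lemma~\ref{ohtani-lemma} applies to $(G,v_1)$ and yields the short exact sequence \eqref{ohtani-ses}. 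The lower bound $\reg(S/J_G)\ge k-1$ is obtained exactly as in Theorem~\ref{reg-whisker-cliquesum}: a whisker at $v_1$ prepended to a $(k-1)$-vertex arc of $C_k$ is an induced path on $k$ vertices, so \cite[Corollary~2.3]{MM} applies.

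Next I would describe the three graphs occurring in \eqref{ohtani-ses} for $(G,v_1)$. Deleting $v_1$ leaves $G\setminus v_1$ equal to the caterpillar built from the path $v_2v_3\cdots v_k$ by attaching $r_2$ pendant vertices at $v_2$, together with $r_1$ isolated vertices; its longest induced path has $k$ vertices (equivalently, $\iv(G\setminus v_1)=k-2$), so by \cite{CDI16} (or \cite[Theorem~8]{her2}) and the fact that adjoining $(x_{v_1},y_{v_1})$ does not change regularity, $\reg(S/((x_{v_1},y_{v_1})+J_{G\setminus v_1}))=k-1$. On the other hand $G_{v_1}=K_{r_1+3}\cup_{\{v_2,v_k\}}C_{k-1}\cup W^{r_2}(v_2)$ and $G_{v_1}\setminus v_1=K_{r_1+2}\cup_{\{v_2,v_k\}}C_{k-1}\cup W^{r_2}(v_2)$, where $C_{k-1}$ is the cycle on $\{v_2,\dots,v_k\}$ formed by the surviving cycle edges together with the new edge $\{v_2,v_k\}$. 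In each of these two graphs the only cycle vertex carrying a whisker is $v_2$, which is an endpoint of the gluing edge; hence, up to relabelling, each is an instance of the ``$A=\{v\}$'' case of Theorem~\ref{reg-whisker-cliquesum} for a cycle of length $k-1$, so $\reg(S/J_{G_{v_1}})=\reg(S/J_{G_{v_1}\setminus v_1})=(k-1)-1=k-2$.

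Feeding these three values into the regularity lemma (Lemma~\ref{regularity-lemma}(1)) applied to \eqref{ohtani-ses} gives
\[
\reg(S/J_G)\le\max\{\reg(S/J_{G\setminus v_1}),\ \reg(S/J_{G_{v_1}}),\ \reg(S/J_{G_{v_1}\setminus v_1})+1\}=\max\{k-1,\,k-2,\,k-1\}=k-1,
\]
and combined with the lower bound this yields $\reg(S/J_G)=k-1$; together with Corollary~\ref{reg-whisker} for $|A|=1$ this settles the regularity statement. The delicate point, and where I expect the real work to lie, is obtaining the sharp value $k-1$ rather than just $\le k$: Proposition~\ref{reg-whiskerk-1_k} only gives $\reg(S/J_{G_{v_1}}),\reg(S/J_{G_{v_1}\setminus v_1})\le k-1$, which through the regularity lemma would give only $\reg(S/J_G)\le k$. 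One needs the strictly smaller value $k-2$ for these two modules, which is why recognising $G_{v_1}$ and $G_{v_1}\setminus v_1$ as instances of the already-settled single-whisker case is essential; if that identification failed, one would be forced to analyse the finer long exact sequence of Tor \eqref{ohtani-tor} in homological degree $k$ directly.

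For the uniqueness of the extremal Betti number I would argue uniformly for both cases. Every vertex of $C_k$ lies in at least two maximal cliques of $G$ (its two incident cycle edges, and for $v_1,v_2$ also a whisker edge), so $\iv(G)=k$. Since $|A|\le 2<k$, trees are not attached to every vertex of the cycle, so Corollary~\ref{betti-unicyclic} shows that one of $\beta_{p,p+\iv(G)-1}(S/J_G)=\beta_{p,p+k-1}(S/J_G)$ or $\beta_{p,p+\iv(G)}(S/J_G)=\beta_{p,p+k}(S/J_G)$ is an extremal Betti number, where $p=\pd(S/J_G)$. As $\reg(S/J_G)=k-1$, the Betti number $\beta_{p,p+k}(S/J_G)$ vanishes; hence $\beta_{p,p+k-1}(S/J_G)\ne 0$, and by the criterion recalled in the introduction ($M$ admits a unique extremal Betti number if and only if $\beta_{\pd(M),\pd(M)+\reg(M)}(M)\ne 0$) we conclude that $S/J_G$ has a unique extremal Betti number.
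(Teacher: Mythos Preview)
Your argument is correct and follows essentially the same route as the paper. Both proofs handle $|A|=1$ via Corollary~\ref{reg-whisker}, and for $|A|=2$ with adjacent vertices apply the Ohtani short exact sequence \eqref{ohtani-ses} at $v=v_1$, identify $G\setminus v_1$ as a caterpillar with regularity $k-1$ (via \cite{CDI16}/\cite[Theorem~8]{her2}), recognise $G_{v_1}$ and $G_{v_1}\setminus v_1$ as the single-whisker case of Theorem~\ref{reg-whisker-cliquesum} on $C_{k-1}$ with regularity $k-2$, and conclude via Lemma~\ref{regularity-lemma}; for uniqueness, both invoke Corollary~\ref{betti-unicyclic} with $\iv(G)=k$ (you spell this step out more explicitly than the paper, which simply cites Corollaries~\ref{depth-unicyclic} and~\ref{betti-unicyclic}).
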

		\begin{proof}
			Suppose $G=C_k\cup (\cup_{i=1}^{k} W^{r_i}(v_i))$ and whiskers are attached only at one vertex. Then by Corollary \ref{reg-whisker}, $\reg(S/J_G)=k-1$. Now assume that whiskers are attached only at two adjacent vertices of $C_k$, say $G=C_k\cup W^{r_1}(v)\cup W^{r_2}(v_2)$ for $r_1,r_2\geq 1$. Note that $G\setminus v$ is the graph $P_{k-1}\cup W^{r_2}(v_2)$ with $r_1$ isolated vertices, where $V(P_{k-1})=\{v_2,\dots,v_k\}$. Thus by \cite[Theorem 4.1]{CDI16} and \eqref{betti-product}, $\reg(S/((x_v,y_v)+J_{G\setminus v}))=k-1$. Here, $G_v=K_{r_1+3}\cup_{\{v_2,v_k\}}(C_{k-1}\cup W^{r_2}(v_2))$ and $G_v\setminus v=K_{r_1+2}\cup_{\{v_2,v_k\}}(C_{k-1}\cup W^{r_2}(v_2))$. Then it follows from the proof of Theorem \ref{reg-whisker-cliquesum} that $\reg(S/J_{G_v})=k-2$ and $\reg(S/((x_v,y_v)+J_{G_v\setminus v}))=k-2$. Therefore by Lemma \ref{regularity-lemma} and the short exact sequence \eqref{ohtani-ses}, $\reg(S/J_G)\leq k-1$. Hence, $\reg(S/J_G)=k-1$. Now it follows from Corollary \ref{betti-unicyclic} that $\beta_{p,p+k-1}(S/J_G)$ is the unique extremal Betti number of $S/J_G$, where $p=\pd(S/J_G)$.
		\end{proof}
		Now we prove that if $G$ does not belong to the class of graphs considered in Theorem \ref{reg-whiskerk-1}, then $\reg(S/J_G)=k$. To prove this, we first need to compute extremal Betti number of some intermediate graphs.
			\begin{proposition}\label{betti-cliquesum2}
				Let $G=C_k\cup_e K_m \cup_{e'}K_{m'}$ for $k,m,m'\geq 3$. Then $\beta_{n,n+k}(S/J_G)$ is the unique extremal Betti number of $S/J_G$.
			\end{proposition}
			\begin{proof}
				Let $e=\{v,v_2\}$. It is enough to prove that $n\leq \depth(S/J_G)$, $\reg(S/J_G)\leq k$ and $\beta_{n,n+k}(S/J_G)\neq 0$. We
				prove this by induction on $k$. Assume that $k=3$. Since $e\cap e'\neq \emptyset$, we assume that $e'=\{v,v_3\}$. Then, it can be seen that $G_v=K_n$, $G_v\setminus v=K_{n-1}$ and $G\setminus v=K_{m-1}\cup_{v_2}P_2\cup_{v_3}K_{m'-1}$. Thus, we have that $\depth((S/J_{G_v}))=n+1$, $\depth(S/((x_v,y_v)+J_{G_v\setminus v}))=n$ and $\reg((S/J_{G_v}))=\reg(S/((x_v,y_v)+J_{G_v\setminus v}))=1$. Moreover, $\beta_{n-1,n}(S/J_{G_v})$ and $\beta_{n,n+1}(S/((x_v,y_v)+J_{G_v\setminus v}))$ are the unique extremal Betti numbers of $S/J_{G_v}$ and $S/((x_v,y_v)+J_{G_v\setminus v})$, respectively. By Proposition \ref{betti-decomposition}, $\reg(S/((x_v,y_v)+J_{G\setminus v}))=3$. Also, it follows from \cite[Theorem 3.1]{her1} that $J_{G\setminus v}$ is Cohen-Macaulay. Hence, $\depth(S/((x_v,y_v)+J_{G\setminus v}))=n$ and $\beta_{n,n+3}(S/((x_v,y_v)+J_{G\setminus v}))$ is the unique extremal Betti number. Therefore, by applying Lemmas \ref{depth-lemma} and \ref{regularity-lemma} on the short exact sequence \eqref{ohtani-ses}, we get that $\depth(S/J_G)\geq n$ and $\reg(S/J_G)\leq 3$. 
				Now it follows from the long exact sequence \eqref{ohtani-tor} for $i=n$ and $j=3$ that 
				 \begin{align*} 
				 \Tor_{n}^{S}\left( \frac{S}{J_G},\K\right)_{n+3}\simeq \Tor_{n}^{S}\left(\frac{S}{(x_v,y_v)+J_{G\setminus v}},\K\right)_{n+3}\neq 0.
				 \end{align*}
				 Therefore, $\beta_{n,n+3}(S/J_G)\neq 0$.
				 Now assume that $k\geq 4$.  
				 \vskip 1mm
				 \noindent
				 \textbf{Case 1:} Let $e$ $\cap$ $e'\neq \emptyset$. Suppose $v\in e\cap e'$. Then $e'=\{v,v_k\}.$ Note that $G_v=K_{m+m'-1}\cup_{\{v_2,v_k\}}C_{k-1}$, $G_v\setminus v=K_{m+m'-2}\cup_{\{v_2,v_k\}}C_{k-1}$ and $G\setminus v=K_{m-1}\cup_{v_2}P_{k-1}\cup_{v_k}K_{m'-1}$. Thus, by virtue of \cite[Proposition 3.11]{JAR2}, we have $\reg(S/J_{G_v})=k-2=\reg(S/((x_v,y_v)+J_{G_v\setminus v}))$. By Proposition \ref{betti-cliquesum}, $\depth(S/J_{G_v})=n$, $\depth(S/((x_v,y_v)+J_{G_v\setminus v}))=n-1$ and $\beta_{n,n+k-2}(S/J_{G_v})$, $\beta_{n+1,n+1+k-2}(S/((x_v,y_v)+J_{G_v\setminus v}))$ are the unique extremal Betti numbers. It follows from Proposition \ref{betti-decomposition} that $\reg(S/((x_v,y_v)+J_{G\setminus v}))=k$. By virtue of \cite[Theorem 3.1]{her1}, $J_{G\setminus v}$ is Cohen-Macaulay. Therefore, $\depth(S/((x_v,y_v)+J_{G\setminus v}))=n$ and $\beta_{n,n+k}(S/((x_v,y_v)+J_{G\setminus v}))$ is the unique extremal Betti number. Hence, by using Lemmas \ref{depth-lemma} and \ref{regularity-lemma} on the short exact sequence \eqref{ohtani-ses}, we have $n\leq \depth(S/J_G)$ and $\reg(S/J_G)\leq k$. Consider the long exact sequence \eqref{ohtani-tor} for $i=n$, $j=k$ and we get that  
				 \begin{align*} 
				 \Tor_{n}^{S}\left( \frac{S}{J_G},\K\right)_{n+k}\simeq \Tor_{n}^{S}\left(\frac{S}{(x_v,y_v)+J_{G\setminus v}},\K\right)_{n+k}\neq 0.
				 \end{align*}
				 \vskip 1mm
				 \noindent
				 \textbf{Case 2:} Let $e\cap e'=\emptyset$. Let $e'=\{v_i,v_{i+1}\}$ for $i\geq 3$. It can be noted that $G_v=K_{m+1}\cup_{\{v_2,v_k\}}C_{k-1}\cup_{e'}K_{m'}$, $G_v\setminus v=K_{m}\cup_{\{v_2,v_k\}}C_{k-1}\cup_{e'}K_{m'}$ and $G\setminus v=K_{m-1}\cup_{v_2}P_{k-1}\cup_{e'}K_{m'}$. Thus, by induction and \eqref{betti-product}, $\beta_{n,n+k-1}(S/J_{G_v})$ and $\beta_{n+1,n+1+k-1}(S/((x_v,y_v)+J_{G_v\setminus v}))$ are the unique extremal Betti numbers of $S/J_{G_v}$ and $S/((x_v,y_v)+J_{G_v\setminus v})$ respectively. By virtue of Proposition \ref{betti-decomposition}, $\reg(S/((x_v,y_v)+J_{G\setminus v}))=k-1$. It is known \cite[Theorem 3.1]{her1} that $J_{G\setminus v}$ is Cohen-Macaulay, and hence $\depth(S/((x_v,y_v)+J_{G\setminus v}))=n$ and $\beta_{n,n+k-1}(S/((x_v,y_v)+J_{G\setminus v}))$ is the unique extremal Betti number. Therefore, by applying Lemmas \ref{depth-lemma}, \ref{regularity-lemma} on the short exact sequence \eqref{ohtani-ses}, we have $\depth(S/J_G)\geq n$ and $\reg(S/J_G)\leq k$. Also, it follows from the long exact sequence \eqref{ohtani-tor} for $i=n+1$ and in graded degree $j=k-1$ that
				 \begin{align*} \Tor_{n+1}^{S}\left(\frac{S}{(x_v,y_v)+J_{G_v\setminus v}},\K\right)_{n+1+k-1}\simeq \Tor_{n}^{S}\left( \frac{S}{J_G},\K\right)_{n+1+k-1}\neq 0. 
				 \end{align*}
				 Therefore, $\beta_{n,n+k}(S/J_G)\neq 0$, as required.
				\end{proof}
				\begin{proposition}\label{betti-cliquesum3}
					Let $m\geq 3$, $k\geq 4$ and $G=K_m\cup_e C_k \cup W^{r_1}(v)$ for $r_1\geq 1$ with $v\notin e$. Then $\beta_{n,n+k}(S/J_G)$ is the unique extremal Betti number of $S/J_G$.
				\end{proposition}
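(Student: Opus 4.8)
The plan is to establish the values of $\depth$ and $\reg$ from results already available, and then to extract the single Betti number $\beta_{n,n+k}(S/J_G)$ by one application of Ohtani's Lemma at the whisker vertex; no induction on $k$ is needed.

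First I would record the reductions. The graph $G$ is the clique sum of $H=C_k\cup_e K_m$ with a forest (the bundle of $r_1$ whiskers) along the single vertex $v$, and $v\notin e$; thus, in the notation of Theorem \ref{depthn-girth}, $A=\{v\}$ and $A\cap e=\emptyset$, so that theorem gives $\depth(S/J_G)=n$ and hence $\pd(S/J_G)=n$ by Auslander--Buchsbaum. Since $e\nsubseteq A$, Proposition \ref{reg-whiskerk-1_k} gives $\reg(S/J_G)\le k$. A count of non-simplicial vertices shows that the $k$ vertices of $C_k$ are exactly the internal vertices of $G$, so $\iv(G)=k$. Consequently it suffices to prove $\beta_{n,n+k}(S/J_G)\ne 0$: this forces $\reg(S/J_G)\ge k$, hence $\reg(S/J_G)=k$, and then $\beta_{\pd(S/J_G),\,\pd(S/J_G)+\reg(S/J_G)}(S/J_G)=\beta_{n,n+k}(S/J_G)\ne 0$, which is exactly the assertion that $S/J_G$ has a unique extremal Betti number, namely $\beta_{n,n+k}(S/J_G)$.

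Next I would apply Lemma \ref{ohtani-lemma} at $v$, which is not simplicial because (as $k\ge4$) it lies in two maximal cliques, and analyse the three modules occurring in \eqref{ohtani-ses}. Writing $v_{j-1},v_{j+1}$ for the neighbours of $v$ on $C_k$ and $e''=\{v_{j-1},v_{j+1}\}$, one checks that $G_v=K_m\cup_e C_{k-1}\cup_{e''}K_{r_1+3}$ and $G_v\setminus v=K_m\cup_e C_{k-1}\cup_{e''}K_{r_1+2}$, where $C_{k-1}$ is the cycle on $V(C_k)\setminus\{v\}$ and $e\ne e''$. Since $m\ge3$, $r_1+3\ge3$, and---crucially using $r_1\ge1$---$r_1+2\ge3$, both are instances of Proposition \ref{betti-cliquesum2}; hence $\pd(S/J_{G_v})=n$, $\reg(S/J_{G_v})=k-1$, and $S/J_{G_v\setminus v}$ has unique extremal Betti number $\beta_{n-1,(n-1)+(k-1)}$, so by \eqref{betti-product} the module $S/((x_v,y_v)+J_{G_v\setminus v})$ has $\pd=n+1$, $\reg=k-1$ and unique extremal Betti number $\beta_{n+1,(n+1)+(k-1)}\ne0$. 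On the other hand $G\setminus v$ is the disjoint union of $K_m\cup_e P_{k-1}$, with $P_{k-1}=C_k\setminus v$, and $r_1$ isolated vertices; $K_m\cup_e P_{k-1}$ is a path of complete graphs, hence Cohen--Macaulay by \cite[Theorem 3.1]{her1}, so $\pd(S/J_{G\setminus v})=k+m-4$ and $\pd(S/((x_v,y_v)+J_{G\setminus v}))=k+m-2=n-r_1\le n-1$.

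Finally I would read \eqref{ohtani-tor} in homological degree $i=n$ and internal degree $n+k=(n+1)+(k-1)$. In that degree the terms $\Tor_{n+1}^S(S/((x_v,y_v)+J_{G\setminus v}),\K)_{n+k}$ and $\Tor_{n+1}^S(S/J_{G_v},\K)_{n+k}$ vanish, because $\pd(S/((x_v,y_v)+J_{G\setminus v}))\le n-1$ and $\pd(S/J_{G_v})=n$; hence the connecting homomorphism $\Tor_{n+1}^S(S/((x_v,y_v)+J_{G_v\setminus v}),\K)_{n+k}\to\Tor_n^S(S/J_G,\K)_{n+k}$ is injective, and its source is the nonzero Betti number $\beta_{n+1,(n+1)+(k-1)}(S/((x_v,y_v)+J_{G_v\setminus v}))$ found above. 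Therefore $\beta_{n,n+k}(S/J_G)\ne0$, and by the reductions above this finishes the proof. The parts that need care are the combinatorial identifications of $G_v$, $G_v\setminus v$ and $G\setminus v$---in particular noticing that the hypothesis $r_1\ge1$ is precisely what keeps $G_v\setminus v$ inside the scope of Proposition \ref{betti-cliquesum2}---together with the invocation of \cite[Theorem 3.1]{her1} for the Cohen--Macaulayness of $K_m\cup_e P_{k-1}$; everything else is a routine bookkeeping of projective dimensions and internal degrees.
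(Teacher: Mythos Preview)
Your proof is correct and follows essentially the same route as the paper: apply Ohtani's lemma at the whisker vertex $v$, identify $G_v$ and $G_v\setminus v$ as instances of Proposition~\ref{betti-cliquesum2}, and read off $\beta_{n,n+k}(S/J_G)\neq 0$ from the long exact sequence~\eqref{ohtani-tor} in degree $(n,n+k)$. The only difference is cosmetic: you invoke Theorem~\ref{depthn-girth} and Proposition~\ref{reg-whiskerk-1_k} to obtain $\depth(S/J_G)=n$ and $\reg(S/J_G)\le k$ up front, whereas the paper re-derives these bounds from the same short exact sequence via Lemmas~\ref{depth-lemma} and~\ref{regularity-lemma}.
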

				\begin{proof}
					  As in the previous result, we prove that $n\leq \depth(S/J_G)$, $\reg(S/J_G)\leq k$ and $\beta_{n,n+k}(S/J_G)\neq 0$. Note that $G\setminus v=K_m\cup_e P_{k-1}$ with $r_1$ isolated vertices. By Proposition \ref{betti-decomposition}, $\reg(S/((x_v,y_v)+J_{G\setminus v}))=k-2$, and by \cite[Theorem 1.1]{her1}, $\pd(S/((x_v,y_v)+J_{G\setminus v}))=n-r_1\leq n-1$. Here, $G_v=K_m\cup_e C_{k-1}\cup_{\{v_2,v_k\}}K_{r_1+3}$ and $G_v\setminus v=K_m\cup_e C_{k-1}\cup_{\{v_2,v_k\}}K_{r_1+2}$. By virtue of Proposition \ref{betti-cliquesum2}, we have that $\beta_{n,n+k-1}(S/J_{G_v})$ and $\beta_{n+1,n+1+k-1}(S/((x_v,y_v)+J_{G_v\setminus v}))$ are the unique extremal Betti numbers of $S/J_{G_v}$ and $S/((x_v,y_v)+J_{G_v\setminus v})$ respectively. Then it follows from Lemmas \ref{depth-lemma}, \ref{regularity-lemma} and the short exact sequence \eqref{ohtani-ses} that $\depth(S/J_G)\geq n$ and $\reg(S/J_G)\leq k$. Now consider the long exact sequence \eqref{ohtani-tor} for $i=n+1$ and $j=k-1$, we get the isomorphism:
					  \begin{align*} \Tor_{n+1}^{S}\left(\frac{S}{(x_v,y_v)+J_{G_v\setminus v}},\K\right)_{n+1+k-1}\simeq \Tor_{n}^{S}\left( \frac{S}{J_G},\K\right)_{n+1+k-1}.
					  \end{align*} 
					  This implies that $\beta_{n,n+k}(S/J_G)\neq 0$, as desired. 
				\end{proof}
		\begin{proposition}\label{reg-whiskerk1}
			Let $k\geq 4$, $m\geq 2$ and $G=K_m\cup_e C_k\cup (\cup_{i=1}^{k} W^{r_i}(v_i))$ for $r_i\geq 0$. Let $A=\{v_i\in V(C_k): r_i\geq 1\}$. If $|A\cap e|=1$, $|A|=2$ and vertices of $A$ are not adjacent, then $\reg(S/J_G)=k$. Moreover, $S/J_G$ admits a unique extremal Betti number.
		\end{proposition}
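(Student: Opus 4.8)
The plan is to reduce everything to the single inequality $\beta_{n,n+k}(S/J_G)\neq 0$ and then combine it with bounds already available. Write $e=\{v,v_2\}$; since the reflection of $C_k$ fixing $e$ is an automorphism of $H$, I may assume $v=v_1\in A$, so $r_1\geq 1$, and then, because $|A\cap e|=1$ and the two vertices of $A$ are non-adjacent, the second vertex of $A$ is some $v_j$ with $3\leq j\leq k-1$ (and $r_i=0$ for $i\notin\{1,j\}$). Since $e\nsubseteq A$, Proposition \ref{reg-whiskerk-1_k} gives $\reg(S/J_G)\leq k$; since $G$ is a clique sum of $H=C_k\cup_e K_m$ with a forest, Theorem \ref{depth-girth} gives $\depth(S/J_G)\geq n$, i.e. $\pd(S/J_G)\leq n$. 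Also $\iv(G)=k$ (every vertex of $C_k$ is internal, while the $m-2$ vertices of $K_m$ off $e$ and all whisker vertices are simplicial). Once $\beta_{n,n+k}(S/J_G)\neq 0$ is known, it follows that $\pd(S/J_G)=n$ and $\reg(S/J_G)=k$, whence $\beta_{n,n+k}(S/J_G)=\beta_{\pd(S/J_G),\,\pd(S/J_G)+\reg(S/J_G)}(S/J_G)\neq 0$ and $S/J_G$ has a unique extremal Betti number, namely $\beta_{n,n+k}(S/J_G)$.

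To prove $\beta_{n,n+k}(S/J_G)\neq 0$ I would apply Ohtani's Lemma \ref{ohtani-lemma} at the vertex $v=v_1$, which is not simplicial because $v_2,v_k\in N_G(v_1)$ are non-adjacent for $k\geq 4$, and use the short exact sequence \eqref{ohtani-ses}. A direct inspection of the construction shows
\[
G_v=K_{m+r_1+1}\cup_{e'}C_{k-1}\cup W^{r_j}(v_j),\qquad G_v\setminus v=K_{m+r_1}\cup_{e'}C_{k-1}\cup W^{r_j}(v_j),
\]
where $e'=\{v_2,v_k\}$, $C_{k-1}$ is the cycle on $\{v_2,\dots,v_k\}$ and $v_j\notin e'$; while $G\setminus v$ is the block graph $K_{m-1}\cup_{v_2}P_{k-1}\cup W^{r_j}(v_j)$ together with $r_1$ isolated vertices, where $V(P_{k-1})=\{v_2,\dots,v_k\}$.

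Next I would identify the invariants of the three pieces. For $G\setminus v$: it is decomposable along $v_2$, the path-with-whiskers part $P_{k-1}\cup W^{r_j}(v_j)$ is a caterpillar whose longest induced path has length $k-2$ (since $3\leq j\leq k-1$), so by \cite[Theorem 4.1]{CDI16}, Proposition \ref{betti-decomposition} and \cite[Theorem 1.1]{her1} we get $\reg(S/((x_v,y_v)+J_{G\setminus v}))\leq k-1$ and $\pd(S/((x_v,y_v)+J_{G\setminus v}))\leq n$. For $G_v$ and $G_v\setminus v$: when $k\geq 5$ they are exactly of the type in Proposition \ref{betti-cliquesum3} (the complete graph has at least three vertices and the cycle has length $k-1\geq 4$), so $\pd(S/J_{G_v})=n$, $\pd(S'/J_{G_v\setminus v})=n-1$ and $\beta_{n-1,n+k-2}(S'/J_{G_v\setminus v})\neq 0$; by \eqref{betti-product} this gives $\pd(S/((x_v,y_v)+J_{G_v\setminus v}))=n+1$ and $\beta_{n+1,n+k}(S/((x_v,y_v)+J_{G_v\setminus v}))\neq 0$. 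When $k=4$ one has $C_{k-1}=C_3$, so $G_v$ and $G_v\setminus v$ are generalized block graphs (no trees attached to a vertex of $e'$), and the same conclusions follow from \cite[Theorem 3.2]{KM-CA} and \cite[Theorem 3.4]{Arv-GBG}, using $\iv(G_v)=\iv(G_v\setminus v)=3=k-1$.

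Finally I would read off $\beta_{n,n+k}(S/J_G)\neq 0$ from the long exact sequence \eqref{ohtani-tor} in homological degree $i=n+1$ and internal degree $n+k$: the summand $\Tor_{n+1}^S(S/((x_v,y_v)+J_{G\setminus v}),\K)_{n+k}$ vanishes because $\pd(S/((x_v,y_v)+J_{G\setminus v}))\leq n$, the summand $\Tor_{n+1}^S(S/J_{G_v},\K)_{n+k}$ vanishes because $\pd(S/J_{G_v})=n$, and $\Tor_{n+1}^S(S/((x_v,y_v)+J_{G_v\setminus v}),\K)_{n+k}\neq 0$, so the connecting map embeds the last module into $\Tor_n^S(S/J_G,\K)_{n+k}$. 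I expect the main difficulty to lie in pinning down $G_v$, $G_v\setminus v$ and $G\setminus v$ precisely and, above all, in the base case $k=4$, where $G_v$ and $G_v\setminus v$ are no longer covered by Proposition \ref{betti-cliquesum3} and one must argue with generalized block graphs instead; the remaining manipulation of the long exact sequence is routine and parallels the proofs of Propositions \ref{betti-cliquesum2} and \ref{betti-cliquesum3}.
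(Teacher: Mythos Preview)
Your proof is correct and follows essentially the same route as the paper: reduce to showing $\beta_{n,n+k}(S/J_G)\neq 0$, apply Ohtani's decomposition at the vertex of $e$ lying in $A$, identify $G_v$ and $G_v\setminus v$ as $K_{m+r_1+1}\cup_{e'}C_{k-1}\cup W^{r_j}(v_j)$ (resp.\ with $K_{m+r_1}$), invoke Proposition~\ref{betti-cliquesum3} for these, and read off the desired nonvanishing from the long exact sequence~\eqref{ohtani-tor} in homological degree $n+1$.

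The one point where you are more careful than the paper is the base case $k=4$: Proposition~\ref{betti-cliquesum3} is stated only for cycles of length $\geq 4$, so it does not literally apply to $G_v$ and $G_v\setminus v$ when $k-1=3$; you correctly observe that in this case these graphs are generalized block graphs with $\iv=3$, and you recover the needed extremal Betti numbers from \cite[Theorem 3.2]{KM-CA} and \cite[Theorem 3.4]{Arv-GBG}. The paper glosses over this distinction. Likewise, your appeal to Theorem~\ref{depth-girth} (valid for $m\geq 2$) rather than Theorem~\ref{depthn-girth} (stated for $m\geq 3$) is the cleaner choice for covering the case $m=2$ uniformly.
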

		\begin{proof}
			Let $e=\{v,v_k\}$, $A=\{v,v_i\}$ such that $v$ and $v_i$ are non-adjacent. Then $3\leq i\leq k-1$ and $G=K_m\cup_e C_k\cup W^{r_1}(v)\cup W^{r_i}(v_i)$ for $r_1,r_i\geq 1$. It follows from Theorem \ref{depthn-girth} and Proposition \ref{reg-whiskerk-1_k} that $\depth(S/J_G)=n$ and $\reg(S/J_G)\leq k$ respectively. So we only need to show that $\beta_{n,n+k}(S/J_G)\neq 0$. Note that $G\setminus v$ is a block graph with $r_1$ isolated vertices. Thus, by \cite[Theorem 1.1]{her1} and \eqref{betti-product}, $\pd(S/((x_v,y_v)+J_{G\setminus v}))\leq n-1$. Also, it can be observed that $G_v=K_{m+r_1+1}\cup_{\{v_2,v_k\}} C_{k-1}\cup W^{r_i}(v_i)$ and $G_v\setminus v=K_{m+r_1}\cup_{\{v_2,v_k\}} C_{k-1}\cup W^{r_i}(v_i)$ with $v_i\notin \{v_2,v_k\}$. Then $G_v$ and $G_v\setminus v$ belong to the class of graphs considered in Proposition \ref{betti-cliquesum3}. Hence,  $\beta_{n,n+k-1}(S/J_{G_v})$ and $\beta_{n+1,n+1+k-1}(S/((x_v,y_v)+J_{G_v\setminus v}))$ are the unique extremal Betti numbers. Now it follows from the long exact sequence \eqref{ohtani-tor} for $i=n+1$ and $j=k-1$ that $\beta_{n,n+k}(S/J_G)\neq 0$. Hence, the assertion follows.
		\end{proof}	
		\begin{corollary}\label{reg-whiskerk}
			Let $k\geq 4$ and $G=C_k\cup (\cup_{i=1}^{k} W^{r_i}(v_i))$ for $r_i\geq 0$. Let $A=\{v_i\in V(C_k): r_i\geq 1\}$. If $|A|=2$ and vertices of $A$ are non-adjacent or $3\leq |A|\leq k-1$, then $\reg(S/J_G)=k$.
		\end{corollary}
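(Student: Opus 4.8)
The upper bound $\reg(S/J_G)\le k$ is already available, so the whole content of the statement is the matching lower bound. Indeed, we may assume $A\subsetneq V(C_k)$: if $|A|=2$ this is automatic since $k\ge 4$, while if $|A|=k$ then $A=V(C_k)$ and $\reg(S/J_G)=k+1$ by Corollary \ref{reg-whiskerk+1}, so that possibility is not part of the situation the statement addresses. With $A\subsetneq V(C_k)$, some vertex of $C_k$ lies outside $A$, hence $C_k$ has an edge $e$ with $e\nsubseteq A$, and $\reg(S/J_G)\le k$ follows from Corollary \ref{reg-whisker} and Proposition \ref{reg-whiskerk-1_k} applied with $m=2$ (note that $K_2\cup_e C_k=C_k$). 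Thus it remains to prove $\reg(S/J_G)\ge k$, which I would do in the two cases of the hypothesis.

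If $|A|=2$ and the two vertices $v_a,v_b$ of $A$ are non-adjacent in $C_k$, I would apply Proposition \ref{reg-whiskerk1} with $m=2$. Since $A=\{v_a,v_b\}$ and $v_a,v_b$ are non-adjacent, every edge $e$ of $C_k$ incident to $v_a$ satisfies $A\cap e=\{v_a\}$; choosing such an $e$ we may regard $G$ as $K_2\cup_e C_k\cup(\cup_{i=1}^{k} W^{r_i}(v_i))$ with $|A\cap e|=1$, $|A|=2$, and the vertices of $A$ non-adjacent, so Proposition \ref{reg-whiskerk1} gives $\reg(S/J_G)=k$ directly (and, as a bonus, that $S/J_G$ has a unique extremal Betti number).

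For the remaining range $3\le |A|\le k-1$ I would reduce to the case just treated by an induced-subgraph argument. Because $k\ge 4$, the largest clique of $C_k$ has two vertices, so every subset of $V(C_k)$ of size at least three contains two non-adjacent vertices; fix such a pair $v_a,v_b\in A$ and let $G'$ be the induced subgraph of $G$ on $V(C_k)$ together with the whisker vertices attached at $v_a$ and at $v_b$. Then $G'=C_k\cup W^{r_a}(v_a)\cup W^{r_b}(v_b)$ with $r_a,r_b\ge 1$, so the case just proved gives $\reg(S_{G'}/J_{G'})=k$; since the regularity of a binomial edge ideal does not increase under passage to an induced subgraph \cite[Corollary 2.2]{MM}, we get $\reg(S/J_G)\ge \reg(S_{G'}/J_{G'})=k$, and together with the upper bound this yields $\reg(S/J_G)=k$. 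I do not expect a real obstacle here: the substantive analysis is already carried out inside Proposition \ref{reg-whiskerk1}, and induced-subgraph monotonicity of regularity is a standard tool; the only points that need brief care are that the non-adjacency of the two vertices of $A$ is exactly what allows arranging $|A\cap e|=1$ in the first case, and that the $G'$ chosen in the second case is genuinely an induced subgraph of $G$ realizing the model $C_k\cup W^{r_a}(v_a)\cup W^{r_b}(v_b)$.
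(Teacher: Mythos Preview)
Your proposal is correct and follows essentially the same approach as the paper. The paper handles both cases uniformly: it picks two non-adjacent vertices $v_j,v_l\in A$, sets $G'=C_k\cup W^{r_j}(v_j)\cup W^{r_l}(v_l)$, applies Proposition~\ref{reg-whiskerk1} with $m=2$ to get $\reg(S_{G'}/J_{G'})=k$, then uses induced-subgraph monotonicity \cite[Corollary 2.2]{MM} and Proposition~\ref{reg-whiskerk-1_k} to conclude; your two-case split simply treats the situation $G'=G$ separately, which is logically equivalent. Your observation that $|A|=k$ must be excluded (since then $\reg(S/J_G)=k+1$) is a valid remark about the statement as written, and indeed the paper's upper bound via Proposition~\ref{reg-whiskerk-1_k} already presupposes $A\subsetneq V(C_k)$.
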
	
		\begin{proof}
		Let $v_j,v_l\in A$ such that $v_j$ and $v_{l}$ are non-adjacent. Set $G'=C_{k}\cup W^{r_j}(v_j)\cup W^{r_{l}}(v_{l})$. Then, clearly $G'$ is an induced subgraph of $G$. By considering $m=2$ in Proposition \ref{reg-whiskerk1}, we have $\reg(S_{G'}/J_{G'})=k$. Hence it follows from \cite[Corollary 2.2]{MM} and Proposition \ref{reg-whiskerk-1_k} that $\reg(S/J_G)=k$.
		\end{proof}
		We now combine the results from Theorem \ref{reg-whiskerk-1} and Corollaries \ref{reg-whisker}, \ref{reg-whiskerk+1}, \ref{reg-whiskerk} to get the following conclusion.
		\begin{corollary}\label{reg-corollary}
			Let $G=C_k\cup (\cup_{i=1}^{k} W^{r_i}(v_i))$, $r_i\geq 0$. Let $A=\{v_i\in V(C_k): r_i\geq 1\}$. If $A\neq \emptyset$, then $k-1\leq \reg(S/J_G)\leq k+1$. Moreover,
			\begin{enumerate}[(1)]
				\item $\reg(S/J_G)=k+1$ if and only if $A=V(C_k)$,
				\item $\reg(S/J_G)=k-1$ if and only if $|A|=1$ or $|A|=2$ and vertices of $A$ are adjacent,
				\item $\reg(S/J_G)=k$ if and only if $A$ contains at least two non-adjacent vertices and $A\subsetneq V(C_k)$.
			\end{enumerate}
		\end{corollary}
		Let $k\geq 4$ and $G=C_k\cup (\cup_{i=1}^{k} W^{r_i}(v_i))$, $r_i\geq 0$. If $\reg(S/J_G)=k+1$ or $\reg(S/J_G)=k-1$, then we proved that $S/J_G$ admits a unique extremal Betti number, see Corollary \ref{reg-whiskerk+1} and Theorem \ref{reg-whiskerk-1}. From now, we suppose $\reg(S/J_G)=k$. We show that $S/J_G$ does not always admit a unique extremal Betti number. In rest of the section, we study behavior of uniqueness of extremal Betti number for them. Let $A=\{v_i\in V(C_k): r_i\geq 1\}$. First, we consider the case when $G[A]$ is disconnected.
		\begin{proposition}\label{extremal-disconnected}
			Let $k\geq 4$ and $G=C_k\cup (\cup_{i=1}^{k} W^{r_i}(v_i))$ for $r_i\geq 0$. Let $A=\{v_i\in V(C_k): r_i\geq 1\}$. If $2\leq |A|\leq k-2$ and $G[A]$ is disconnected, then $\beta_{n,n+k}(S/J_G)$ is the unique extremal Betti number of $S/J_G$.
		\end{proposition}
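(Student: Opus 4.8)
The plan is to reduce the statement to three facts: $\pd(S/J_G)=n$, $\reg(S/J_G)=k$, and $\beta_{n,n+k}(S/J_G)\neq 0$. Once these are in hand the conclusion is immediate, because (as recorded in the introduction) a module admits a unique extremal Betti number precisely when $\beta_{\pd,\pd+\reg}\neq 0$, and here that number is exactly $\beta_{n,n+k}(S/J_G)$.

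The first two facts come straight from earlier results. Since $2\le|A|\le k-2$ and $G[A]$ is disconnected, $A$ cannot consist of $k-2$ consecutive vertices of $C_k$ (those would induce a path, hence a connected subgraph), so Corollary~\ref{depth-unicyclic} gives $\depth(S/J_G)=n$ and therefore $\pd(S/J_G)=n$ by Auslander--Buchsbaum. Also $G[A]$ disconnected forces $A$ to contain two non-adjacent vertices, so Corollary~\ref{reg-whiskerk} gives $\reg(S/J_G)=k$.

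For $\beta_{n,n+k}(S/J_G)\neq 0$ I would argue by induction on $k$. The base case $k=4$ forces $|A|=2$ with the two vertices opposite on $C_4$; choosing an edge $e$ of $C_4$ incident to exactly one of them realises $G$ as $K_2\cup_e C_4\cup(\cup_i W^{r_i}(v_i))$ with $|A\cap e|=1$ and $A$ non-adjacent, so Proposition~\ref{reg-whiskerk1} applies verbatim. For $k\ge 5$, pick a vertex $v\in A$ and apply Ohtani's Lemma~\ref{ohtani-lemma}, via the short exact sequence~\eqref{ohtani-ses} and the long exact sequence~\eqref{ohtani-tor}. Here $G\setminus v$ is a disconnected block graph on $n-1$ vertices (the $r_v\ge1$ whisker-ends of $v$ become isolated, and the rest is a caterpillar), so \cite[Theorem~1.1]{her1} gives $\depth(S'/J_{G\setminus v})\ge n$ and hence $\pd\bigl(S/((x_v,y_v)+J_{G\setminus v})\bigr)\le n$; moreover $G_v=K_{r_v+3}\cup_{e'}C_{k-1}\cup\bigl(\cup_{i\in A\setminus v}W^{r_i}(v_i)\bigr)$ and $G_v\setminus v=K_{r_v+2}\cup_{e'}C_{k-1}\cup\bigl(\cup_{i\in A\setminus v}W^{r_i}(v_i)\bigr)$, where $e'$ joins the two cycle-neighbours of $v$, so Theorem~\ref{depth-girth} gives $\pd(S/J_{G_v})\le n$. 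Reading~\eqref{ohtani-tor} at homological degree $n+1$ and internal degree $n+k$, the contributions of $S/J_G$, of $S/((x_v,y_v)+J_{G\setminus v})$ and of $S/J_{G_v}$ all vanish, so
\[
\Tor_{n+1}^{S}\Bigl(\tfrac{S}{(x_v,y_v)+J_{G_v\setminus v}},\K\Bigr)_{n+k}\hookrightarrow \Tor_{n}^{S}\Bigl(\tfrac{S}{J_G},\K\Bigr)_{n+k},
\]
and it remains to show the left-hand term is nonzero, i.e.\ (discarding the Koszul factor $x_v,y_v$) that $\beta_{n-1,(n-1)+(k-1)}(S'/J_{G_v\setminus v})\neq 0$ with $\pd(S'/J_{G_v\setminus v})=n-1$ and $\reg(S'/J_{G_v\setminus v})=k-1$.

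The crux is to choose $v\in A$ so that $G_v\setminus v$ (and likewise $G_v$), a clique sum $K_{m'}\cup_{e'}C_{k-1}\cup(\text{whiskers on }A\setminus v)$, lands in a class for which this unique extremal Betti number, sitting in homological degree equal to the projective dimension, is already known: $|A\setminus v|=1$ with the whiskered vertex off $e'$ (Proposition~\ref{betti-cliquesum3}), $|A\setminus v|=2$ non-adjacent with one vertex on $e'$ (Proposition~\ref{reg-whiskerk1}), or $A\setminus v$ disconnected in $C_{k-1}$ with $2\le|A\setminus v|\le(k-1)-2$ (the $K_{m'}$-analogue of the present statement, obtained by the same induction together with Propositions~\ref{betti-cliquesum2}--\ref{reg-whiskerk1}). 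Because $G[A]$ is disconnected one can always pick $v$ realising one of these: this is a short finite case analysis on the number of ``runs'' of $A$ along $C_k$, the lengths of the gaps between them, and whether the two cycle-neighbours of $v$ are whiskered, arranged so that $A\setminus v$ never degenerates to a single pair of adjacent vertices. This case analysis---and the verification of the $K_{m'}$-companion---is the one genuinely laborious part; granting it, the displayed injection yields $\beta_{n,n+k}(S/J_G)\neq 0$, completing the induction and hence the proof.
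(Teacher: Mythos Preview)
Your approach is correct and is essentially the paper's: reduce to $\pd(S/J_G)=n$, $\reg(S/J_G)=k$, and $\beta_{n,n+k}(S/J_G)\neq 0$, and establish the last via Ohtani's lemma after strengthening to the auxiliary class $H=K_m\cup_e C_k\cup(\text{whiskers})$. The paper tightens your presentation in two places. First, it states the $K_m$-companion up front and inducts on $|A|$ rather than on $k$; since each Ohtani step drops both by one, the two inductions are equivalent, but inducting on $|A|$ makes the base case $|A|=2$ (handled by Proposition~\ref{reg-whiskerk1}) the natural starting point. Second, the paper replaces your deferred ``runs'' case analysis by a single explicit choice: pick $v_j\in A$ so that $G[A\setminus v_j]$ is still disconnected and the glued edge $e$ meets $A$ only in $v_j$. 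This forces one cycle-neighbour of $v_j$ to lie outside $A$, so the new edge $\{v_{j-1},v_{j+1}\}$ of $C_{k-1}$ cannot merge components of $A\setminus v_j$, and the induction hypothesis applies directly to $H_{v_j}$ and $H_{v_j}\setminus v_j$. Your three-case breakdown (Propositions~\ref{betti-cliquesum3}, \ref{reg-whiskerk1}, and the inductive $K_{m'}$-analogue) would ultimately reach the same conclusion, but the paper's uniform vertex choice eliminates the bookkeeping you flag as ``the one genuinely laborious part''.
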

		\begin{proof}
			Let $A=\{v_i\in V(C_k): r_i\geq 1\}$. Then $G=C_k\cup (\cup_{v_i\in A} W^{r_i}(v_i))$ for $r_i\geq 1$. If $|A|=2$, then we choose $e$ such that $A\cap e\neq \emptyset$. If $|A|\geq 3$, then choose $v_j\in A$ such that $G[A\setminus v_j]$ is disconnected and $e\cap A=\{v_j\}$. Let $H=K_m\cup_e G$ for $m\geq 2$. Set $n'=|V(H)|$. Then $n'=n+m-2$. We claim that $\beta_{n',n'+k}(S_H/J_H)\neq 0$. We prove this by induction on $|A|$. If $|A|=2$, then the assertion follows from Proposition \ref{reg-whiskerk1}. Suppose $|A|\geq 3$. Note that $H_{v_j}=K_{m+r_j+1}\cup_{\{v_{j-1},v_{j+1}\}}C_{k-1}\cup (\cup_{v_i\in A\setminus \{v_j\}} W^{r_i}(v_i))$ and $H_{v_j}\setminus v_j=K_{m+r_j}\cup_{\{v_{j-1},v_{j+1}\}}C_{k-1}\cup (\cup_{v_i\in A\setminus \{v_j\}} W^{r_i}(v_i))$. Since $G[A\setminus \{v_j\}]$ is disconnected with $|A\setminus \{v_j\}|\geq 2$, $H_{v_j}$ and $H_{v_j}\setminus v_j$ satisfy induction hypotheses. Therefore, $\beta_{n',n'+k-1}(S_H/J_{H_{v_j}})$ and $\beta_{n'+1,n'+1+k-1}(S_H/((x_{v_j},y_{v_j})+J_{H_{v_j}\setminus v_j}))$ are the unique extremal Betti numbers of $S_H/J_{H_{v_j}}$ and $S_H/((x_{v_j},y_{v_j})+J_{H_{v_j}\setminus v_j})$ respectively. By \cite[Theorem 1.1]{her1}, $\pd(S_H/((x_{v_j},y_{v_j})+J_{H\setminus v_j}))\leq n'-1$. Now consider the long exact sequence \eqref{ohtani-tor} for the pair $(H,v_j)$ to get that $\beta_{n',n'+k}(S_H/J_H)\neq 0$. Taking $m=2$, we get  $\beta_{n,n+k}(S/J_G)\neq 0$. By Corollaries \ref{depth-unicyclic} and \ref{reg-whiskerk}, we have $\depth(S/J_G)=n$ and $\reg(S/J_G)=k$. Hence, $\beta_{n,n+k}(S/J_G)$ is the unique extremal Betti number of $S/J_G$.
		\end{proof}
		From now, we suppose $G[A]$ is connected.
		\begin{proposition}\label{non-unique-extremal-whiskers}
			Let $k\geq 5$, $m\geq 3$ and $G=K_m\cup_e C_k\cup (\cup_{i=1}^{k} W^{r_i}(v_i))$ for $r_i\geq 0$. Let $A=\{v_i\in V(C_k): r_i\geq 1\}$. If $|A\cap e|=1$, $2\leq |A|\leq k-3$ and $G[A]$ is connected, then $\beta_{n,n+k-1}(S/J_G)$ is an extremal Betti number. In particular, if $k\geq 6$, $G=C_k\cup (\cup_{i=1}^{k} W^{r_i}(v_i))$ and $G[A]$ is connected with $3\leq |A|\leq k-3$, then $\beta_{n,n+k-1}(S/J_G)$ is an extremal Betti number, i.e., $S/J_G$ does not admit a unique extremal Betti number.
		\end{proposition}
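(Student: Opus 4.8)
The plan is to prove, by induction on $|A|$, the two statements $\beta_{n,n+k-1}(S/J_G)\neq 0$ and $\beta_{n,n+j}(S/J_G)=0$ for every $j\geq k$. Since the hypothesis $|A|\leq k-3$ keeps $A$ from containing $k-2$ consecutive vertices, Theorem~\ref{depthn-girth} gives $\depth(S/J_G)=n$, hence $\pd(S/J_G)=n$ by Auslander--Buchsbaum; together with the two statements above this says exactly that $\beta_{n,n+k-1}(S/J_G)$ is an extremal Betti number. Throughout I fix $e=\{v,v_k\}$ with $v\in A$ and $v_k\notin A$; as $G[A]$ is connected and $|A|<k$, the set $A$ is an arc of $C_k$, and since it contains $v$ but not the cycle-neighbour $v_k$ of $v$, it has the shape $A=\{v=v_1,v_2,\dots,v_j\}$ with $j=|A|$.

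The engine is Ohtani's Lemma~\ref{ohtani-lemma} applied at the non-simplicial vertex $v$, via~\eqref{ohtani-ses} and~\eqref{ohtani-tor}. The first summand $G\setminus v$ is a disconnected block graph (it carries the $r_1\geq 1$ isolated whisker vertices of $v$), so $\pd\big(S/((x_v,y_v)+J_{G\setminus v})\big)\leq n-1$ by \cite[Theorem 1.1]{her1} and~\eqref{betti-product}, whence its $\Tor^S_n$ and $\Tor^S_{n+1}$ vanish. The other two graphs are $G_v=K_{m+r_1+1}\cup_{e'}C_{k-1}\cup(\cup_{v_i\in A'}W^{r_i}(v_i))$ and $G_v\setminus v=K_{m+r_1}\cup_{e'}C_{k-1}\cup(\cup_{v_i\in A'}W^{r_i}(v_i))$, with $e'=\{v_2,v_k\}$, cycle $C_{k-1}$ on $\{v_2,\dots,v_k\}$, and $A'=A\setminus\{v\}=\{v_2,\dots,v_j\}$ an arc of $C_{k-1}$ with $v_2\in e'$, $v_k\notin A'$. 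When $|A|\geq 3$ these satisfy the proposition's hypotheses with $k-1$ in place of $k$ (clique sizes $\geq 3$, $|A'\cap e'|=1$, $2\leq|A'|\leq (k-1)-3$, $G_v[A']$ connected), so by induction $\beta_{n,n+(k-2)}(S/J_{G_v})$ is extremal, and via~\eqref{betti-product} and the Koszul resolution of $\K$ over $\K[x_v,y_v]$, $\beta_{n+1,(n+1)+(k-2)}\big(S/((x_v,y_v)+J_{G_v\setminus v})\big)\neq 0$ while $\beta_{n+1,(n+1)+j}\big(S/((x_v,y_v)+J_{G_v\setminus v})\big)=0$ for $j\geq k-1$; moreover $\pd(S/J_{G_v})=n$ by Theorem~\ref{depthn-girth} and $\reg(S/J_{G_v})\leq k-1$ by Proposition~\ref{reg-whiskerk-1_k}. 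The base case $|A|=2$ is handled by one further Ohtani reduction of $G_v$ and $G_v\setminus v$ (at $v_2$), which lands on clique sums of $C_{k-2}$ with a complete graph along an edge, controlled by Proposition~\ref{betti-cliquesum}.

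Feeding this into~\eqref{ohtani-tor}: in homological degree $n+1$ and internal degree $k-2$ the vanishings above make the connecting map $\Tor^S_{n+1}\big(S/((x_v,y_v)+J_{G_v\setminus v}),\K\big)_{n+k-1}\to\Tor^S_{n}(S/J_G,\K)_{n+k-1}$ injective (its kernel is the image of $\Tor^S_{n+1}$ of the middle module, which is $0$ here because $\pd(S/((x_v,y_v)+J_{G\setminus v}))\leq n-1$ and $\pd(S/J_{G_v})=n$), whence $\beta_{n,n+k-1}(S/J_G)\neq 0$. For the vanishing, $\reg(S/J_G)\leq k$ (Proposition~\ref{reg-whiskerk-1_k}) kills $\beta_{n,n+j}(S/J_G)$ for $j>k$, and reading~\eqref{ohtani-tor} in homological degree $n+1$, internal degree $k-1$, the module $\Tor^S_n(S/J_G,\K)_{n+k}$ is caught in an exact sequence between $\Tor^S_{n+1}\big(S/((x_v,y_v)+J_{G_v\setminus v}),\K\big)_{n+k}$, which vanishes because $\beta_{n-1,(n-1)+(k-2)}(S_{G_v\setminus v}/J_{G_v\setminus v})$ is extremal, and $\Tor^S_n$ of the middle module, which vanishes because $\pd(S/((x_v,y_v)+J_{G\setminus v}))\leq n-1$ and $\reg(S/J_{G_v})\leq k-1$; hence $\beta_{n,n+k}(S/J_G)=0$, and the induction is complete.

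For the ``in particular'' statement ($m=2$, $3\leq|A|\leq k-3$, $k\geq 6$), I apply Ohtani at an endpoint $v$ of the arc $A$; then $G_v$ and $G_v\setminus v$ are of the form $K_M\cup_{e'}C_{k-1}\cup(\text{whiskers})$ with $M=r_v+3,\,r_v+2\geq 3$ and whisker set $A'=A\setminus\{v\}$ satisfying $2\leq|A'|\leq (k-1)-3$ and $|A'\cap e'|=1$, so the case $m\geq 3$ already proved applies, and the long exact sequence yields $\beta_{n,n+k-1}(S/J_G)\neq 0$ and $\beta_{n,n+k}(S/J_G)=0$ exactly as above; since the arc $A$ of length $\geq 3$ in $C_k$ with $k\geq 6$ contains two non-adjacent vertices, $\reg(S/J_G)=k$ by Corollary~\ref{reg-whiskerk}, so $S/J_G$ has an extremal Betti number $\beta_{i,i+k}(S/J_G)$ for some $i<n$ (using $\beta_{n,n+k}(S/J_G)=0$), distinct from $\beta_{n,n+k-1}(S/J_G)$, and the extremal Betti number is not unique. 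The most delicate point is the simultaneous bookkeeping in~\eqref{ohtani-tor}: one has to propagate a non-vanishing and a vanishing at once, and the vanishing genuinely uses the \emph{extremality} of $\beta_{n-1,(n-1)+(k-2)}(S_{G_v\setminus v}/J_{G_v\setminus v})$, not merely $\reg$ and $\pd$ of $G_v\setminus v$ — which is why the inductive claim must be packaged as ``$\beta_{n,n+k-1}\neq 0$ together with $\beta_{n,n+j}=0$ for $j\geq k$'' rather than as the bare extremality statement; a secondary complication is the base case $|A|=2$, where $G_v,G_v\setminus v$ are not yet of the inductive shape and need an extra reduction to Proposition~\ref{betti-cliquesum}.
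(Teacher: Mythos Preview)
Your proof is correct and follows essentially the same architecture as the paper: Ohtani's decomposition at $v$, induction on $|A|$, and the same identification of $G_v$ and $G_v\setminus v$ as clique-sum graphs of girth $k-1$ with whisker set $A'=A\setminus\{v\}$. The paper is slightly more economical in two places: it invokes Theorem~\ref{depthn-girth} not just for $\pd=n$ but for the full conclusion that one of $\beta_{n,n+\iv(G)-1},\beta_{n,n+\iv(G)}$ is already extremal (here $\iv(G)=k$), so it only needs to prove the vanishing $\beta_{n,n+k}(S/J_G)=0$ and never argues the nonvanishing separately; and for the base case $|A|=2$ it cites the computation inside the proof of Theorem~\ref{reg-whisker-cliquesum} (the case of a single whisker at an edge vertex gives $\reg(S/J_{G_v})=k-2$) rather than performing your second Ohtani reduction down to Proposition~\ref{betti-cliquesum}. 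Your route is a little longer but entirely sound, and your closing remark about why the inductive claim must bundle nonvanishing and vanishing together is well observed.
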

		\begin{proof}
			Let $G=K_m\cup_e C_k\cup (\cup_{i=1}^{k} W^{r_i}(v_i))$ for $r_i\geq 0$. By Theorem \ref{depthn-girth}, we have either $\beta_{n,n+k-1}(S/J_G)$ or $\beta_{n,n+k}(S/J_G)$ is an extremal Betti number. So, it is enough to show that $\beta_{n,n+k}(S/J_G)=0$. We prove this by induction on $|A|$. Let $e=\{v,v_k\}$. Since $|A\cap e|=1$, assume that $v\in A$. Set $A=\{v,v_2,\dots,v_t\}$ for some $2\leq t\leq k-3$. Since, $G\setminus v$ is a disconnected block graph with $r_1+1$ components, by \cite[Theorem 1.1]{her1} and \eqref{betti-product}, $\pd(S/((x_v,y_v)+J_{G\setminus v}))=n-r_1\leq n-1$. Suppose $|A|=2$. Then it can be noted that $G_v=K_{m+r_1+1}\cup_{\{v_k,v_2\}} C_{k-1}\cup W^{r_2}(v_2)$ and $G_v\setminus v=K_{m+r_1}\cup_{\{v_k,v_2\}} C_{k-1}\cup W^{r_2}(v_2)$. Now it follows from the proof of Theorem \ref{reg-whisker-cliquesum} that $\reg(S/J_{G_v})=k-2=\reg(S/((x_v,y_v)+J_{G_v\setminus v}))$. Therefore, by Theorem \ref{depthn-girth}, $\beta_{n,n+k-2}(S/J_{G_v})$ and $\beta_{n+1,n+1+k-2}(S/((x_v,y_v)+J_{G_v\setminus v}))$ are extremal Betti numbers, and hence $\beta_{n,n+k}(S/J_{G_v})=0=\beta_{n+1,n+k}(S/((x_v,y_v)+J_{G_v\setminus v}))$. Thus, it follows from the long exact sequence \eqref{ohtani-tor} that $\beta_{n,n+k}(S/J_G)=0$. Now assume that $|A|\geq 3$. Then $G_v=K_{m+r_1+1}\cup_{\{v_k,v_2\}} C_{k-1}\cup (\cup_{i=2}^{t}W^{r_i}(v_i))$ and
			$G_v\setminus v=K_{m+r_1}\cup_{\{v_k,v_2\}} C_{k-1}\cup (\cup_{i=2}^{t}W^{r_i}(v_i))$. Clearly, $G_v$ and $G_v\setminus v$ satisfy induction hypotheses. Therefore, $\beta_{n,n+k-1}(S/J_{G_v})=0=\beta_{n+1,n+1+k-1}(S/((x_v,y_v)+J_{G_v\setminus v}))$.
			Hence, from the long exact sequence \eqref{ohtani-tor}, we get $\beta_{n,n+k}(S/J_G)=0$.
			
			Let $G=C_k\cup (\cup_{i=1}^{k} W^{r_i}(v_i))$ for $r_i\geq 0$. Set $A=\{v,v_2,\dots,v_t\}$ for some $3\leq t\leq k-3$. Then $G=C_k\cup (\cup_{i=1}^{t} W^{r_i}(v_i))$ for $r_i\geq 1$. Observe that $G_v=K_{r_1+3}\cup_{\{v_k,v_2\}} C_{k-1}\cup (\cup_{i=2}^{t}W^{r_i}(v_i))$ and
			$G_v\setminus v=K_{r_1+2}\cup_{\{v_k,v_2\}} C_{k-1}\cup (\cup_{i=2}^{t}W^{r_i}(v_i))$. Thus, by the above part and \eqref{betti-product}, $\beta_{n,n+k-1}(S/J_{G_v})=0=\beta_{n+1,n+1+k-1}(S/((x_v,y_v)+J_{G_v\setminus v}))$. Since, $G\setminus v$ is a forest with $r_1+1$ trees, by \cite[Theorem 1.1]{her1} and \eqref{betti-product}, $\pd(S/((x_v,y_v)+J_{G\setminus v}))=n-r_1\leq n-1$. Hence, it follows from the long exact sequence \eqref{ohtani-tor} that $\beta_{n,n+k}(S/J_G)=0$. By Corollary \ref{reg-whiskerk}, we have $\reg(S/J_G)=k$. Therefore, $\beta_{n,n+k-1}(S/J_G)$ is not the unique extremal Betti number of $S/J_G$.
		\end{proof}
		Now we consider the case when $|A|=k-2$ and $G[A]$ is connected. We assume that $A=\{v,v_2,\dots,v_{k-2}\}$. Then $G=C_k\cup (\cup_{i=1}^{k-2} W^{r_i}(v_i))$ for $r_i\geq 1$. Now, we investigate the uniqueness of extremal Betti number of $S/J_G$.
			\begin{proposition}\label{non-unique-extremal2}
				Let $k\geq 4$, $m\geq 3$ and $G=K_m\cup_{\{v,v_k\}}C_k\cup (\cup_{i=1}^{k-2} W^{r_i}(v_i))$ for $r_i\geq 1$. If $r_i\geq 2$ for all $1\leq i\leq k-3$, then $\beta_{n-1,n-1+k-1}(S/J_G)$ is an extremal Betti number. In particular, if $k\geq 5$, $G=C_k\cup (\cup_{i=1}^{k-2} W^{r_i}(v_i))$ with $r_i\geq 2$ for all $2\leq i\leq k-3$, then $\beta_{n-1,n-1+k-1}(S/J_G)$ is an extremal Betti number, i.e., $S/J_G$ does not admit a unique extremal Betti number.
			\end{proposition}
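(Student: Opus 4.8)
The plan is to induct on $k$, with base case $k=4$, peeling off the vertex $v_1$ by Ohtani's Lemma \ref{ohtani-lemma} at each step. Take $e=\{v_1,v_k\}$, so that $v_1\in A$; in the main statement $r_1\geq 2$. First I would collect the reductions that turn the problem into a single vanishing. Since $A$ is the set of $k-2$ consecutive vertices $v_1,\dots,v_{k-2}$ with $A\cap e=\{v_1\}\neq\emptyset$ and $A\subsetneq V(C_k)$, Theorem \ref{depth-girth} gives $\depth(S/J_G)=n+1$, hence $\pd(S/J_G)=n-1$; Proposition \ref{reg-whiskerk-1_k} gives $\reg(S/J_G)\leq k$ since $e\not\subseteq A$; and every vertex of $C_k$ is internal, so $\iv(G)=k$ and Theorem \ref{depthn+1-girth} tells us that either $\beta_{n-1,n-1+k-1}(S/J_G)$ or $\beta_{n-1,n-1+k}(S/J_G)$ is an extremal Betti number. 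Therefore it is enough to prove $\beta_{n-1,n-1+k}(S/J_G)=0$, which forces the extremal one to be $\beta_{n-1,n-1+k-1}(S/J_G)$.

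To establish this vanishing I would use the short exact sequence \eqref{ohtani-ses} at $v_1$. The three auxiliary modules are: $S/((x_{v_1},y_{v_1})+J_{G\setminus v_1})$, where $G\setminus v_1$ is a block graph with $r_1$ isolated vertices adjoined, so by \cite[Theorem 1.1]{her1} and \eqref{betti-product} its projective dimension is $n-r_1\leq n-2$ (this is where $r_1\geq 2$ enters), hence all of its $\Tor_{n-1}$ vanishes; $S/J_{G_{v_1}}$ with $G_{v_1}=K_{m+r_1+1}\cup_{\{v_2,v_k\}}C_{k-1}\cup(\cup_{i=2}^{k-2}W^{r_i}(v_i))$; and $S/((x_{v_1},y_{v_1})+J_{G_{v_1}\setminus v_1})$ with $G_{v_1}\setminus v_1=K_{m+r_1}\cup_{\{v_2,v_k\}}C_{k-1}\cup(\cup_{i=2}^{k-2}W^{r_i}(v_i))$. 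The point is that $G_{v_1}$ and $G_{v_1}\setminus v_1$ again belong to the class under study: the cycle has shrunk to $C_{k-1}$, the new clique sizes are still $\geq 3$, the whiskers sit on the $k-3$ consecutive vertices $v_2,\dots,v_{k-2}$ with $v_2$ on the new gluing edge, and the inequalities $r_2,\dots,r_{k-3}\geq 2$ persist. So the induction hypothesis applies (when $k=4$ the shrunken cycle is $C_3$ and one invokes Theorem \ref{depth-girth3} in its place). From it, $\beta_{n-1,n-1+k-2}(S/J_{G_{v_1}})$ is extremal with $\pd(S/J_{G_{v_1}})=n-1$, so $\beta_{n-1,n-1+k}(S/J_{G_{v_1}})=0$; and $\beta_{n-2,n-2+k-2}(S/J_{G_{v_1}\setminus v_1})$ is extremal with $\pd(S/J_{G_{v_1}\setminus v_1})=n-2$, so by \eqref{betti-product} (tensoring with the Koszul resolution of $S/(x_{v_1},y_{v_1})$) we get $\Tor_{n}(S/((x_{v_1},y_{v_1})+J_{G_{v_1}\setminus v_1}))_{n+k-1}=\beta_{n-2,n-2+k-1}(S/J_{G_{v_1}\setminus v_1})=0$.

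With these three vanishings I would conclude from the long exact sequence \eqref{ohtani-tor} at $i=n$ in internal degree $n+k-1$: the term immediately to the left of $\Tor_{n-1}(S/J_G)_{n+k-1}$ is $\Tor_n(S/((x_{v_1},y_{v_1})+J_{G_{v_1}\setminus v_1}))_{n+k-1}=0$, and the term immediately to its right is $\Tor_{n-1}(S/((x_{v_1},y_{v_1})+J_{G\setminus v_1}))_{n+k-1}\oplus\Tor_{n-1}(S/J_{G_{v_1}})_{n+k-1}=0$, whence $\beta_{n-1,n-1+k}(S/J_G)=\Tor_{n-1}(S/J_G)_{(n-1)+k}=0$. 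This proves that $\beta_{n-1,n-1+k-1}(S/J_G)$ is an extremal Betti number of $S/J_G$.

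For the ``in particular'' assertion, with $G=C_k\cup(\cup_{i=1}^{k-2}W^{r_i}(v_i))$, $k\geq 5$, and only $r_2,\dots,r_{k-3}\geq 2$ assumed, I would run the very same Ohtani argument at $v_1$. The only change is that now $r_1$ may equal $1$, so the projective-dimension bound no longer kills $\Tor_{n-1}(S/((x_{v_1},y_{v_1})+J_{G\setminus v_1}))$ outright; instead $G\setminus v_1$ is a caterpillar (the path $v_2-\cdots-v_k$ with its whiskers) together with $r_1$ isolated vertices, so by \cite[Theorem 4.1]{CDI16} and \eqref{betti-product} its regularity equals the length $k-1$ of its longest induced path, and hence $\Tor_{n-1}(S/((x_{v_1},y_{v_1})+J_{G\setminus v_1}))$ vanishes in internal degree $(n-1)+k$. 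Since $G_{v_1}=K_{r_1+3}\cup_{\{v_2,v_k\}}C_{k-1}\cup(\cup_{i=2}^{k-2}W^{r_i}(v_i))$ and $G_{v_1}\setminus v_1=K_{r_1+2}\cup_{\{v_2,v_k\}}C_{k-1}\cup(\cup_{i=2}^{k-2}W^{r_i}(v_i))$ satisfy the hypotheses of the main statement (clique sizes $\geq 3$, cycle $C_{k-1}$), the other two vanishings go through verbatim and give $\beta_{n-1,n-1+k}(S/J_G)=0$, so $\beta_{n-1,n-1+k-1}(S/J_G)$ is extremal. Finally $\reg(S/J_G)=k$ by Corollary \ref{reg-whiskerk}, so $S/J_G$ has another extremal Betti number in homological degree $<n-1$ as well; hence its extremal Betti number is not unique. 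The main obstacle, as the argument shows, is the simultaneous bookkeeping of the three neighbouring terms of $\Tor_{n-1}(S/J_G)_{n+k-1}$ in \eqref{ohtani-tor}, together with checking that the pair $(G_{v_1},G_{v_1}\setminus v_1)$ genuinely re-enters the inductive class — in particular that the condition ``$r_i\geq 2$ on an initial segment of whisker multiplicities'' is inherited under $v_1\mapsto G_{v_1}$, which is exactly what the hypothesis $r_i\geq 2$ for $1\leq i\leq k-3$ is tailored to supply.
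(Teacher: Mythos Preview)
Your proof is correct and follows essentially the same strategy as the paper's: both reduce, via Theorem~\ref{depthn+1-girth}, to showing $\beta_{n-1,n-1+k}(S/J_G)=0$, and both establish this by induction on $k$ through Ohtani's decomposition at $v=v_1$, using Theorem~\ref{depth-girth3} for the base $k=4$ and feeding $G_v$, $G_v\setminus v$ back into the inductive class. The only cosmetic difference is in the ``in particular'' part: to kill $\Tor_{n-1}\bigl(S/((x_v,y_v)+J_{G\setminus v})\bigr)_{n-1+k}$ you invoke the caterpillar regularity formula \cite[Theorem~4.1]{CDI16}, whereas the paper uses \cite[Theorem~8]{her2} (noting $\iv(G\setminus v)=k-2$) to reach the same vanishing.
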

			\begin{proof}
			Let $G=K_m\cup_{\{v,v_k\}}C_k\cup (\cup_{i=1}^{k-2} W^{r_i}(v_i))$, $r_i\geq 1$ and suppose that $r_i\geq 2$ for all $1\leq i\leq k-3$. Then it follows from \cite[Theorem 1.1]{her1} and \eqref{betti-product} that $\pd(S/((x_v,y_v)+J_{G\setminus v}))=n-r_1\leq n-2$. Due to Theorem \ref{depthn+1-girth}, it is enough to show that $\beta_{n-1,n-1+k}(S/J_G)=0$. We proceed it by induction on $k$. Assume that $k=4$. Then  $G_v=K_{m+r_1+1}\cup_{\{v_2,v_k\}}C_3\cup W^{r_2}(v_2)$ and $G_v\setminus v=K_{m+r_1}\cup_{\{v_2,v_k\}}C_{3}\cup W^{r_2}(v_2)$. Thus, $G_v$ and $G_v\setminus v$ belong to the class of graphs considered in Theorem \ref{depth-girth3}. Hence, $\beta_{n-1,n-1+2}(S/J_{G_v})$ and $\beta_{n,n+2}(S/((x_v,y_v)+J_{G_v\setminus v}))$ are extremal Betti numbers. Therefore, $\beta_{n-1,n-1+4}(S/J_{G_v})=0=\beta_{n,n-1+4}(S/((x_v,y_v)+J_{G_v\setminus v}))$. Now it follows from the long exact sequence \eqref{ohtani-tor} for $i=n-1$ that $\beta_{n-1,n-1+4}(S/J_G)=0$. We assume that $k\geq 5$. Then $G_v=K_{m+r_1+1}\cup_{\{v_2,v_k\}}C_{k-1}\cup(\cup_{i=2}^{k-2} W^{r_i}(v_i))$ and $G_v\setminus v=K_{m+r_1}\cup_{\{v_2,v_k\}}C_{k-1}\cup (\cup_{i=2}^{k-2} W^{r_i}(v_i))$. Thus, by induction and \eqref{betti-product}, $\beta_{n-1,n-1+k-1}(S/J_{G_v})=0=\beta_{n,n+k-1}(S/((x_v,y_v)+J_{G_v\setminus v}))$. Hence, from the long exact sequence \eqref{ohtani-tor} for $i=n-1$, we get $\beta_{n-1,n-1+k}(S/J_G)=0$.
			
			Let $G=C_k\cup (\cup_{i=1}^{k-2} W^{r_i}(v_i))$, where $r_1,r_{k-2}\geq 1$ and $r_i\geq 2$ for all $2\leq i\leq k-3$. As in the above part, it is enough to show that $\beta_{n-1,n-1+k}(S/J_G)=0$. Note that $G\setminus v$ is the graph $P_{k-1}\cup (\cup_{i=2}^{k-2} W^{r_i}(v_i))$ with $r_1$ isolated vertices. So, $\iv(G\setminus v)=k-2$. Then by \cite[Theorem 8]{her2} and \eqref{betti-product} $\reg(S/((x_v,y_v)+J_{G\setminus v}))=k-1$, and hence $\beta_{n-1,n-1+k}(S/((x_v,y_v)+J_{G\setminus v}))=0$. Here, $G_v=K_{r_1+3}\cup_{\{v_2,v_k\}}C_{k-1}\cup (\cup_{i=2}^{k-2} W^{r_i}(v_i))$ and $G_v\setminus v=K_{r_1+2}\cup_{\{v_2,v_k\}}C_{k-1}\cup (\cup_{i=2}^{k-2} W^{r_i}(v_i))$. Therefore, by the above part and \eqref{betti-product}, $\beta_{n-1,n-1+k-1}(S/J_{G_v})=0=\beta_{n,n+k-1}(S/((x_v,y_v)+J_{G_v\setminus v}))$. Now it follows from the long exact sequence \eqref{ohtani-tor} that $\beta_{n-1,n-1+k}(S/J_G)=0$, as required.
			\end{proof}
			\begin{proposition}\label{unique-extremal-whisker-lemma}
				Let $k\geq 4$ and $m\geq 3$. Let $G=K_m\cup_{\{v,v_k\}}C_k\cup (\cup_{i=1}^{k-2} W^{r_i}(v_i))$ for $r_i\geq 1$. If $r_i=1$ for some $1\leq i\leq k-3$, then $\beta_{n-1,n-1+k}(S/J_G)$ is an extremal Betti number of $S/J_G$. In particular, if $k\geq 5$, $G=C_k\cup (\cup_{i=1}^{k-2} W^{r_i}(v_i))$ and $r_i=1$ for some $2\leq i\leq k-3$, then $\beta_{n-1,n-1+k}(S/J_G)$ is the unique extremal Betti number of $S/J_G$.
			\end{proposition}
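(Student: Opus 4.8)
Since $A=\{v,v_2,\dots,v_{k-2}\}$ is a set of $k-2$ consecutive vertices of $C_k$ meeting $e=\{v,v_k\}$ with $A\subsetneq V(C_k)$, Theorem \ref{depth-girth} gives $\depth(S/J_G)=n+1$, hence $\pd(S/J_G)=n-1$, while Proposition \ref{reg-whiskerk-1_k} (applicable because $e\nsubseteq A$) gives $\reg(S/J_G)\le k$, and Theorem \ref{depthn+1-girth} says that one of $\beta_{n-1,n-1+k-1}(S/J_G)$, $\beta_{n-1,n-1+k}(S/J_G)$ is the extremal Betti number sitting in homological degree $\pd(S/J_G)$. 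So the whole statement reduces to proving $\beta_{n-1,n-1+k}(S/J_G)\neq 0$: once this is known it is automatically the asserted extremal Betti number, and in the pure cycle case ($m=2$) it is the \emph{unique} extremal Betti number because then $\reg(S/J_G)=k$ by Corollary \ref{reg-whiskerk} and $\beta_{n-1,n-1+k}$ already lives at the projective dimension.

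I would prove this non-vanishing by induction on $k$, applying Ohtani's Lemma \ref{ohtani-lemma} at $v$ and exploiting the long exact sequence \eqref{ohtani-tor}. The three graphs that occur are: $G\setminus v=K_{m-1}\cup_{v_k}P_{k-1}\cup(\cup_{i=2}^{k-2}W^{r_i}(v_i))$ together with $r_1$ isolated vertices, a disconnected block graph with $\pd(S/((x_v,y_v)+J_{G\setminus v}))=n-r_1$ by \cite[Theorem 1.1]{her1}; and $G_v=K_{m+r_1+1}\cup_{\{v_2,v_k\}}C_{k-1}\cup(\cup_{i=2}^{k-2}W^{r_i}(v_i))$ and $G_v\setminus v=K_{m+r_1}\cup_{\{v_2,v_k\}}C_{k-1}\cup(\cup_{i=2}^{k-2}W^{r_i}(v_i))$, again graphs of the present shape on the $(k-1)$-cycle, for which Theorem \ref{depth-girth} gives $\pd(S/J_{G_v})=n-1$ and $\pd(S'/J_{G_v\setminus v})=n-2$, and Proposition \ref{reg-whiskerk-1_k} gives $\reg(S/J_{G_v})\le k-1$, $\reg(S'/J_{G_v\setminus v})\le k-1$. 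In particular $\Tor_n^S(S/J_G)=0$, $\Tor_n^S(S/J_{G_v})=0$ and $\Tor_n^S(S/((x_v,y_v)+J_{G\setminus v}))=0$. Throughout I would pass between the Betti numbers of $S/((x_v,y_v)+J_W)$ and those of $S'/J_W$ via \eqref{betti-product} and the Koszul resolution of $(x_v,y_v)$.

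\emph{Case $(\mathrm{I})$: $r_i=1$ for some $2\le i\le k-3$} (possible only for $k\ge 5$). Then $G_v\setminus v$ itself satisfies the inductive hypothesis of the proposition, so $\beta_{n-2,n-2+k-1}(S'/J_{G_v\setminus v})\neq 0$; since $\pd(S'/J_{G_v\setminus v})=n-2$ this forces $\Tor_n^S(S/((x_v,y_v)+J_{G_v\setminus v}))_{n+k-1}\neq 0$, and the three vanishings above make the long exact sequence produce an inclusion $\Tor_n^S(S/((x_v,y_v)+J_{G_v\setminus v}))_{n+k-1}\hookrightarrow \Tor_{n-1}^S(S/J_G)_{n+k-1}$, whence $\beta_{n-1,n-1+k}(S/J_G)\neq 0$. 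The identical computation, carried out with $G_{v_1}=K_{r_1+3}\cup_{\{v_2,v_k\}}C_{k-1}\cup(\cup_{i=2}^{k-2}W^{r_i}(v_i))$ in place of $G$ (this $G_{v_1}$ lies in Case $(\mathrm{I})$ with $m$ replaced by $r_1+3\ge 3$) and with Corollary \ref{reg-whiskerk} giving $\reg(S/J_G)=k$, proves the ``in particular'' assertion for $G=C_k\cup(\cup_{i=1}^{k-2}W^{r_i}(v_i))$.

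\emph{Case $(\mathrm{II})$: $r_1=1$ and $r_i\ge 2$ for all $2\le i\le k-3$} (this contains the base case $k=4$, where no interior index other than $1$ is available). Here $G_v$ and $G_v\setminus v$ fall under Proposition \ref{non-unique-extremal2} when $k\ge 5$ and under Theorem \ref{depth-girth3} when $k=4$, so their pd-end extremal Betti numbers lie in internal degree $k-2$; consequently $\beta_{n-1,n-1+k}(S/J_{G_v})=0$ and $\Tor_n^S(S/((x_v,y_v)+J_{G_v\setminus v}))_{n+k-1}=0$, and since $\reg(S'/J_{G_v\setminus v})\le k-1$ one also gets $\Tor_{n-1}^S(S/((x_v,y_v)+J_{G_v\setminus v}))_{n-1+k}=0$. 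Feeding these into \eqref{ohtani-tor} in internal degree $n-1+k$ collapses it to an isomorphism $\Tor_{n-1}^S(S/J_G)_{n-1+k}\cong \Tor_{n-1}^S(S/((x_v,y_v)+J_{G\setminus v}))_{n-1+k}=\beta_{n-1,n-1+k}(S/((x_v,y_v)+J_{G\setminus v}))$. Because $r_1=1$ we have $\pd(S/((x_v,y_v)+J_{G\setminus v}))=n-1$, and a direct count gives $\iv(G\setminus v)=k-1$, so $\reg(S_{G\setminus v}/J_{G\setminus v})=k$; thus the right-hand side is $\beta_{\pd,\pd+\reg}$ of the block graph $G\setminus v$, which is nonzero once one checks, via Proposition \ref{betti-decomposition} and the Herzog--Rinaldo classification in \cite{her2}, that $G\setminus v$ --- equivalently the caterpillar $P_{k-1}\cup(\cup_{i=2}^{k-2}W^{r_i}(v_i))$ together with $K_{m-1}$ --- admits a unique extremal Betti number. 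This yields $\beta_{n-1,n-1+k}(S/J_G)\neq 0$ and closes the induction. The delicate point is precisely Case $(\mathrm{II})$: one must pin down in which internal degrees the auxiliary Betti numbers of $G_v$, $G_v\setminus v$ and $G\setminus v$ are nonzero, and in particular invoke the exact uniqueness criterion for block graphs from \cite{her2}, so that the surviving generator of $\Tor_{n-1}^S(S/((x_v,y_v)+J_{G\setminus v}))_{n-1+k}$ is not absorbed elsewhere in the long exact sequence.
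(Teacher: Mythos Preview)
Your proof is correct and follows essentially the same route as the paper's: reduce to showing $\beta_{n-1,n-1+k}(S/J_G)\neq 0$, induct on $k$ via Ohtani's decomposition at $v$, and split into the two cases according to whether some $r_i$ with $2\le i\le k-3$ equals $1$ (handled by induction on $G_v\setminus v$) or only $r_1=1$ (handled through the $G\setminus v$ term). The paper organizes the argument the same way, treating the base case $k=4$ separately rather than absorbing it into your Case~(II), and in Case~(II) it controls the $G_v$ and $G_v\setminus v$ terms simply via the regularity bound from Proposition~\ref{reg-whiskerk-1_k} instead of invoking Proposition~\ref{non-unique-extremal2}.

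One small simplification: in your Case~(II) you argue that $\beta_{n-1,n-1+k}\bigl(S/((x_v,y_v)+J_{G\setminus v})\bigr)\neq 0$ by checking that the block graph $G\setminus v$ has a \emph{unique} extremal Betti number. This is more than you need. Since $\iv(G\setminus v)=k-1$, the Herzog--Rinaldo result \cite[Theorem~6]{her2} already gives directly that $\beta_{p,p+k}$ is an extremal (hence nonzero) Betti number of the block graph $G\setminus v$ at $p=\pd$, and tensoring with the Koszul complex on $(x_v,y_v)$ via \eqref{betti-product} places it at $(n-1,n-1+k)$; the paper uses exactly this shortcut and avoids the uniqueness check.
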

			\begin{proof}
				Due to Theorem \ref{depthn+1-girth}, it is enough to show that  $\beta_{n-1,n-1+k}(S/J_G)\neq 0$. To prove this we proceed by induction on $k$. Assume that $k=4$. Then $G=K_m\cup_{\{v,v_k\}}C_4\cup W^{r_1}(v)\cup W^{r_2}(v_2)$ for $r_1=1$ and $r_2\geq 1$. In this case, $G\setminus v$ is the graph $K_{m-1}\cup_{v_k} P_3\cup W^{r_2}(v_2)$ with one isolated vertex. Therefore, by \cite[Theorem 1.1]{her1} and \eqref{betti-product}, $\pd(S/((x_v,y_v)+J_{G\setminus v}))=n-1$, and hence by \cite[Theorem 8]{her2} and \eqref{betti-product}, $\beta_{n-1,n-1+4}(S/((x_v,y_v)+J_{G\setminus v}))$ is an extremal Betti number. Note that $G_v=K_{m+2}\cup _{\{v_2,v_k\}}C_3\cup W^{r_2}(v_2)$ and $G_v\setminus v=K_{m+1}\cup _{\{v_2,v_k\}}C_3\cup W^{r_2}(v_2)$. By Proposition \ref{reg-whiskerk-1_k}, $\reg(S/J_{G_v})\leq 3$ and $\reg(S/((x_v,y_v)+J_{G_v\setminus v}))\leq 3$. Therefore, $\beta_{n-1,n-1+j}(S/J_{G_v})=0=\beta_{n-1,n-1+j}(S/((x_v,y_v)+J_{G_v\setminus v}))$ for $j\geq 4$. Hence it follows from the long exact sequence \eqref{ohtani-tor} that $\beta_{n-1,n-1+4}(S/J_G)=\beta_{n-1,n-1+4}(S/((x_v,y_v)+J_{G\setminus v}))\neq 0$. 
				
				Now, we assume that $k\geq 5$. Let $r_i=1$ for some $1\leq i\leq k-3$. Then $G_v=K_{m+r_1+1}\cup_{\{v_2,v_k\}}C_{k-1}\cup (\cup_{i=2}^{k-2} W^{r_i}(v_i))$ and $G_v\setminus v=K_{m+r_1}\cup_{\{v_2,v_k\}}C_{k-1}\cup (\cup_{i=2}^{k-2} W^{r_i}(v_i))$.
				\vskip 1mm
				\noindent
				\textbf{Case 1:} Let $r_1=1$ and $r_i\geq 2$ for all $2\leq i\leq k-3$. Then $G_v=K_{m+2}\cup_{\{v_2,v_k\}}C_{k-1}\cup (\cup_{i=2}^{k-2} W^{r_i}(v_i))$ and $G_v\setminus v=K_{m+1}\cup_{\{v_2,v_k\}}C_{k-1}\cup (\cup_{i=2}^{k-2} W^{r_i}(v_i))$. By virtue of Proposition \ref{reg-whiskerk-1_k}, we have $\reg(S/J_{G_v})\leq k-1$ and $\reg(S/((x_v,y_v)+J_{G_v\setminus v}))\leq k-1$. Hence, $\beta_{n-1,n-1+j}(S/J_{G_v})=0=\beta_{n-1,n-1+j}(S/((x_v,y_v)+J_{G_v\setminus v}))$ for $j\geq k$. In this case, $G\setminus v$ is the graph $K_{m-1}\cup_{v_k} P_{k-1}\cup (\cup_{i=2}^{k-2} W^{r_i}(v_i))$ with one isolated vertex. Therefore, by \cite[Theorem 1.1]{her1} and \eqref{betti-product}, $\pd(S/((x_v,y_v)+J_{G\setminus v}))=n-1$, and hence by \cite[Theorem 8]{her2} and \eqref{betti-product}, $\beta_{n-1,n-1+k}(S/((x_v,y_v)+J_{G\setminus v}))$ is an extremal Betti number. Therefore, from the long exact sequence \eqref{ohtani-tor}, we get that $\beta_{n-1,n-1+k}(S/J_G)=\beta_{n-1,n-1+k}(S/((x_v,y_v)+J_{G\setminus v}))\neq 0$.
				\vskip 1mm
				\noindent
				\textbf{Case 2:} Let $r_i=1$ for some $2\leq i\leq k-3$. By \cite[Theorem 1.1]{her1} and \eqref{betti-product}, $\pd(S/((x_v,y_v)+J_{G\setminus v}))=n-r_1\leq n-1$. In this case, notice that $G_v$ and $G_v\setminus v$ satisfy induction hypotheses. Therefore, $\beta_{n-1,n-1+k-1}(S/J_{G_v})$ and $\beta_{n,n+k-1}(S/((x_v,y_v)+J_{G_v\setminus v}))$ are extremal Betti numbers. Then it follows from the long exact sequence \eqref{ohtani-tor} for $i=n-1$ and $j=k$ that $\beta_{n-1,n-1+k}(S/J_G)\neq 0$.
				
				As in the above part, it is enough to show that $\beta_{n-1,n-1+k}(S/J_G)\neq 0$. Note that $G\setminus v$ is the graph $P_{k-1}\cup (\cup_{i=2}^{k-2} W^{r_i}(v_i))$ with $r_1$ isolated vertices. Therefore, by \cite[Theorem 1.1]{her1}, $p=\pd(S/((x_v,y_v)+J_{G\setminus v}))=n-r_2\leq n-1$. Observe that $G_v=K_{r_1+3}\cup_{\{v_2,v_k\}}C_{k-1}\cup (\cup_{i=2}^{k-2} W^{r_i}(v_i))$ and $G_v\setminus v=K_{r_1+2}\cup_{\{v_2,v_k\}}C_{k-1}\cup (\cup_{i=2}^{k-2} W^{r_i}(v_i))$. Therefore, by the above part and \eqref{betti-product}, $\beta_{n-1,n-1+k-1}(S/J_{G_v})$ and $\beta_{n,n+k-1}(S/((x_v,y_v)+J_{G_v\setminus v}))$ are extremal Betti numbers. Hence the assertion follows from the long exact sequence \eqref{ohtani-tor} for $i=n-1$ and $j=k$.
				\end{proof}
				Now we are left with the case that $|A|=k-1$. In this case, we prove that $S/J_G$ admits a unique extremal Betti number.
				\begin{proposition}\label{betti-whiskerk2}
					Let $H=K_m\cup_e C_k$ for $k\geq 3$, $m\geq 2$. Let $G=H\cup (\cup_{i=1}^{k} W^{r_i}(v_i))$ for $r_i\geq 0$. Let $A=\{v_i\in V(C_k): r_i\geq 1\}$. If $|A|=k-1$ and $e\nsubseteq A$, then $\beta_{n-1,n-1+k}(S/J_G)$ is an extremal Betti number. In particular, if $k\geq 3$, $G=C_k\cup (\cup_{i=1}^{k} W^{r_i}(v_i))$ with $|A|=k-1$, then $S/J_G$ admits a unique extremal Betti number.
				\end{proposition}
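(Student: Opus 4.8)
The plan is to reduce the statement to the single claim $\beta_{n-1,\,n-1+k}(S/J_G)\neq 0$ and then establish that claim by induction on $k$ (the induction being on the full assertion of the proposition, so that it can be applied to graphs of the same shape but with a larger complete-graph part). For the reduction: since $|A|=k-1$, the set $A$ contains $k-1\geq k-2$ consecutive vertices of $C_k$, and because $e\nsubseteq A$ the unique vertex of $C_k$ outside $A$ must be an endpoint of $e$, so also $A\cap e\neq\emptyset$; hence Theorem~\ref{depth-girth} gives $\depth(S/J_G)=n+1$, and thus $\pd(S/J_G)=n-1$ by Auslander--Buchsbaum, while Proposition~\ref{reg-whiskerk-1_k} gives $\reg(S/J_G)\leq k$. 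Therefore $\beta_{i,i+j}(S/J_G)=0$ whenever $i>n-1$ or $j>k$, so once $\beta_{n-1,\,n-1+k}(S/J_G)\neq 0$ it is automatically an extremal Betti number, and the only one; for $m=2$ this moreover forces $\reg(S/J_G)=k$, and uniqueness is precisely the criterion $\beta_{\pd,\,\pd+\reg}\neq 0$ recalled in the introduction.

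For the base case $k=3$ we have $|A|=2$ with exactly one endpoint of $e$ carrying whiskers. If $m\geq 3$, then $G$ is a clique sum of $C_3\cup_e K_m$ with a forest having trees attached to precisely one vertex of $e$, so by Theorem~\ref{depth-girth3} the number $\beta_{n-1,\,n-1+\iv(G)}(S/J_G)$ is an extremal Betti number; since every vertex of $C_3$ lies in at least two maximal cliques one has $\iv(G)=3$, giving $\beta_{n-1,\,n-1+3}(S/J_G)\neq 0$. If $m=2$, then $G$ is a block graph with $\iv(G)=2$, so $\reg(S/J_G)=\iv(G)+1=3$ and the conclusion follows from \cite[Theorem 8]{her2}.

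For the inductive step let $k\geq 4$, and label $C_k$ so that $e=\{v_1,v_2\}$ with $v_1\notin A$ and $v_2\in A$ (possible after reflecting the cycle). Set $v:=v_2$; then $v$ has some $r\geq 1$ whiskers and is not simplicial, so Ohtani's Lemma~\ref{ohtani-lemma} applies. A direct inspection shows $G_v=K_{m+1+r}\cup_{e'}C_{k-1}\cup(\text{whiskers at }v_3,\dots,v_k)$ and $G_v\setminus v=K_{m+r}\cup_{e'}C_{k-1}\cup(\text{whiskers at }v_3,\dots,v_k)$, where $C_{k-1}$ is the cycle on $\{v_1,v_3,\dots,v_k\}$ and $e'=\{v_1,v_3\}$; in particular each has whiskers at exactly $k-2=(k-1)-1$ of its cycle vertices and satisfies $e'\nsubseteq A$ because $v_1$ is still unwhiskered, so the induction hypothesis applies to both. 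Combined with the projective-dimension and regularity observations of the first paragraph applied to these graphs, this gives that $\beta_{n-1,\,n-1+(k-1)}(S/J_{G_v})$ is the unique extremal Betti number with $\pd(S/J_{G_v})=n-1$, and — tensoring the minimal resolution of $S'/J_{G_v\setminus v}$ with the Koszul complex on $x_v,y_v$ and using \eqref{betti-product} — that $\beta_{n,\,n+(k-1)}\big(S/((x_v,y_v)+J_{G_v\setminus v})\big)\neq 0$. Moreover $G\setminus v$ is a block graph together with $r$ isolated vertices, so $\pd\big(S/((x_v,y_v)+J_{G\setminus v})\big)=n-r\leq n-1$ by \cite[Theorem 1.1]{her1}. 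Now read the long exact sequence \eqref{ohtani-tor} for the pair $(G,v)$ at homological index $n$ and internal degree $n+k-1$: the two summands $\Tor_n^S(S/((x_v,y_v)+J_{G\setminus v}),\K)_{n+k-1}$ and $\Tor_n^S(S/J_{G_v},\K)_{n+k-1}$ both vanish, each module having projective dimension $<n$, so the connecting map $\Tor_n^S(S/((x_v,y_v)+J_{G_v\setminus v}),\K)_{n+k-1}\to\Tor_{n-1}^S(S/J_G,\K)_{n+k-1}$ is injective; its source is nonzero, hence $\beta_{n-1,\,n-1+k}(S/J_G)\neq 0$, which closes the induction.

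The point where care is needed is the choice $v=v_2$ in the inductive step: taking the whiskered endpoint of $e$ (rather than $v_1$) is exactly what keeps the enlarged complete graph glued to the shortened cycle along an edge $e'$ one of whose endpoints is still whisker-free, so that $G_v$ and $G_v\setminus v$ remain inside the hypotheses of the proposition; the opposite choice would produce a reduced graph with whiskers at \emph{every} cycle vertex, which belongs to the regime of Theorem~\ref{depthn+1-girth} where the extremal Betti number sits one cohomological degree higher. Once the shapes of $G_v$, $G_v\setminus v$, and $G\setminus v$ are pinned down, the depth/regularity bookkeeping and the base case are routine from the results already established.
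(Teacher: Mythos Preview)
Your proposal is correct and follows essentially the same route as the paper: both reduce to showing $\beta_{n-1,n-1+k}(S/J_G)\neq 0$, handle the base $k=3$ via Theorem~\ref{depth-girth3} (resp.\ \cite[Theorem 8]{her2} when $m=2$), and in the inductive step apply Ohtani's lemma at the whiskered endpoint of $e$ so that $G_v$ and $G_v\setminus v$ again satisfy the hypotheses with $k-1$ in place of $k$. The only difference is cosmetic labeling (you name the unwhiskered endpoint $v_1$ and remove $v_2$, the paper names it $v_2$ and removes $v=v_1$); your closing remark explaining \emph{why} the whiskered endpoint must be chosen is a nice addition that the paper leaves implicit.
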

				\begin{proof}
					As in the previous result, it is enough to show that $\beta_{n-1,n-1+k}(S/J_G)\neq 0$. Let $e=\{v,v_2\}$. We may assume that $r_2=0$. Then $G=K_m\cup_e C_k\cup (\cup_{i=1,i\neq 2}^{k} W^{r_i}(v_i))$ for $r_i\geq 1$. We prove the first part by induction on $k$. If $k=3$ and $m=2$, then the result follows from \cite[Theorem 8]{her2}. If $k=3$ and $m\geq 3$, then the result follows from Theorem \ref{depth-girth3}. Now assume that $k\geq 4$. Note that $G_v=K_{m+r_1+1}\cup_{\{v_2,v_k\}}C_{k-1}\cup(\cup_{i=3}^{k} W^{r_i}(v_i))$, $r_i\geq 1$ and $G_v\setminus v=K_{m+r_1}\cup_{\{v_2,v_k\}}C_{k-1}\cup (\cup_{i=3}^{k} W^{r_i}(v_i))$, $r_i\geq 1$. Thus, by induction and \eqref{betti-product}, $\beta_{n-1,n-1+k-1}(S/J_{G_v})$ and $\beta_{n,n+k-1}(S/((x_v,y_v)+J_{G_v\setminus v}))$ are extremal Betti numbers. Also, by \cite[Theorem 1.1]{her1}, $\pd(S/((x_v,y_v)+J_{G\setminus v}))=n-r_1\leq n-1$. Hence it follows from the long exact sequence \eqref{ohtani-tor} for $i=n-1$ and $j=k$ that $\beta_{n-1,n-1+k}(S/J_G)\neq 0$. Taking $m=2$, we get the second assertion.
				\end{proof}
			We now conclude the following result for the behavior of uniqueness of extremal Betti number for cycles with whiskers graphs.
			\begin{corollary}
			Let $G=C_k\cup (\cup_{i=1}^{k} W^{r_i}(v_i))$, $r_i\geq 0$ with $\reg(S/J_G)=k$. Let $A=\{v_i\in V(C_k): r_i\geq 1\}$.
			\begin{enumerate}[(1)]
				\item If $2\leq |A|\leq k-2$ and $G[A]$ is disconnected, then $S/J_G$ admits a unique extremal Betti number.
				\item Suppose $G[A]$ is connected:
				\begin{enumerate}[(a)]
					\item If $3\leq |A|\leq k-3$, then $S/J_G$ does not admit a unique extremal Betti number.
					\item Suppose $|A|=k-2$, $A=\{v_1,\dots,v_{k-2}\}$. If $r_i\geq 2$ for all $2\leq i\leq k-3$, then $S/J_G$ does not admit a unique extremal Betti number.
					\item Suppose $|A|=k-2$, $A=\{v_1,\dots,v_{k-2}\}$. If $r_i=1$ for some $2\leq i\leq k-3$, then $S/J_G$ admits a unique extremal Betti number
					\item If $|A|=k-1$, then $S/J_G$ admits a unique extremal Betti number.
				\end{enumerate}
			\end{enumerate}	
			\end{corollary}
			 
		To get a better insight into our results, let us look at some of the following examples:

			\begin{minipage}{\linewidth}
				\begin{minipage}{.22\linewidth}
					\begin{figure}[H]
						\begin{tikzpicture}[scale=.8]
						\draw (4.04,-2.14)-- (3.5,-2.66);
						\draw (4.64,-2.68)-- (4.04,-2.14);
						\draw (3.5,-2.66)-- (3.5,-3.43);
						\draw (2.85,-3.40)-- (3.5,-3.43);
						\draw (3.5,-3.43)-- (4.66,-3.43);
						\draw (4.66,-3.43)-- (4.64,-2.68);
						\draw (4.66,-3.43)-- (5.50, -3.40);
						\draw (5.27,-2.58)-- (4.64,-2.68);
						\draw (2.85,-2.56)-- (3.5,-2.66);
						\draw (3.7,-1.7)-- (4.04,-2.14);
						\draw (4.46,-1.62)-- (4.04,-2.14);
						\begin{scriptsize}
						\node at (4.1, -3) {n=11};
						\node at (4.1, -3.7) {$G_1$};
						\fill (4.04,-2.14) circle (1.5pt);
						\fill (5.50,-3.40) circle (1.5pt);
						\fill (2.85,-3.40) circle (1.5pt);
						\fill (3.5,-2.66) circle (1.5pt);
						\fill (4.64,-2.68) circle (1.5pt);
						\fill (3.5,-3.43) circle (1.5pt);
						\fill (4.66,-3.43) circle (1.5pt);
						\fill (2.85,-2.56) circle (1.5pt);
						\fill (5.27,-2.58) circle (1.5pt);
						\fill (3.7,-1.7) circle (1.5pt);
						\fill (4.46,-1.62) circle (1.5pt);
						\end{scriptsize}
						\end{tikzpicture}
					\end{figure}
				\end{minipage}
				\begin{minipage}{.22\linewidth}
					\begin{figure}[H]
						\begin{tikzpicture}[scale=.8]
						\draw (4.04,-2.14)-- (3.5,-2.66);
						\draw (4.64,-2.68)-- (4.04,-2.14);
						\draw (3.5,-2.66)-- (3.5,-3.43);
						\draw (3.5,-3.43)-- (4.66,-3.43);
						\draw (4.66,-3.43)-- (4.64,-2.68);
						\draw (2.85,-2.56)-- (3.5,-2.66);
						\draw (3.7,-1.7)-- (4.04,-2.14);
						\draw (4.46,-1.62)-- (4.04,-2.14);
						\begin{scriptsize}
						\node at (4.1, -3) {n=8};
						\node at (4.1, -3.7) {$G_2$};
						\fill (4.04,-2.14) circle (1.5pt);
						\fill (3.5,-2.66) circle (1.5pt);
						\fill (4.64,-2.68) circle (1.5pt);
						\fill (3.5,-3.43) circle (1.5pt);
						\fill (4.66,-3.43) circle (1.5pt);
						\fill (2.85,-2.56) circle (1.5pt);
						\fill (3.7,-1.7) circle (1.5pt);
						\fill (4.46,-1.62) circle (1.5pt);
						\end{scriptsize}
						\end{tikzpicture}
					\end{figure}
				\end{minipage}
				\begin{minipage}{.22\linewidth}
					\begin{figure}[H]
						\begin{tikzpicture}[scale=.8]
						\draw (4.04,-2.14)-- (3.5,-2.66);
						\draw (4.64,-2.68)-- (4.04,-2.14);
						\draw (3.5,-2.66)-- (3.5,-3.43);
						\draw (3.5,-3.43)-- (4.66,-3.43);
						\draw (4.66,-3.43)-- (4.64,-2.68);
						\draw (4.66,-3.43)-- (5.50, -3.40);
						\draw (2.85,-2.56)-- (3.5,-2.66);
						\draw (3.7,-1.7)-- (4.04,-2.14);
						\draw (4.46,-1.62)-- (4.04,-2.14);
						\begin{scriptsize}
						\node at (4.1, -3) {n=9};
						\node at (4.1, -3.7) {$G_3$};
						\fill (4.04,-2.14) circle (1.5pt);
						\fill (5.50,-3.40) circle (1.5pt);
						\fill (3.5,-2.66) circle (1.5pt);
						\fill (4.64,-2.68) circle (1.5pt);
						\fill (3.5,-3.43) circle (1.5pt);
						\fill (4.66,-3.43) circle (1.5pt);
						\fill (2.85,-2.56) circle (1.5pt);
						\fill (3.7,-1.7) circle (1.5pt);
						\fill (4.46,-1.62) circle (1.5pt);
						\end{scriptsize}
						\end{tikzpicture}
					\end{figure}
				\end{minipage}
				\begin{minipage}{.22\linewidth}
					\begin{figure}[H]
						\begin{tikzpicture}[scale=.8]
						\draw (4.04,-2.14)-- (3.5,-2.66);
						\draw (4.64,-2.68)-- (4.04,-2.14);
						\draw (3.5,-2.66)-- (3.5,-3.43);
						\draw (3.5,-3.43)-- (4.66,-3.43);
						\draw (4.66,-3.43)-- (4.64,-2.68);
						\draw (5.27,-2.58)-- (4.64,-2.68);
						\draw (2.85,-2.56)-- (3.5,-2.66);
						\draw (3.7,-1.7)-- (4.04,-2.14);
						\draw (4.46,-1.62)-- (4.04,-2.14);
						\begin{scriptsize}
						\node at (4.1, -3) {n=9};
						\node at (4.1, -3.7) {$G_4$};
						\fill (4.04,-2.14) circle (1.5pt);
						\fill (3.5,-2.66) circle (1.5pt);
						\fill (4.64,-2.68) circle (1.5pt);
						\fill (3.5,-3.43) circle (1.5pt);
						\fill (4.66,-3.43) circle (1.5pt);
						\fill (2.85,-2.56) circle (1.5pt);
						\fill (5.27,-2.58) circle (1.5pt);
						\fill (3.7,-1.7) circle (1.5pt);
						\fill (4.46,-1.62) circle (1.5pt);
						\end{scriptsize}
						\end{tikzpicture}
					\end{figure}
				\end{minipage}				
			\end{minipage}
		    \vskip 3mm
		    \noindent		   
			By Corollary \ref{reg-whiskerk+1}, Theorem \ref{reg-whiskerk-1} and Corollary \ref{reg-whiskerk}, $\reg(S_{G_1}/J_{G_1})=6$, $\reg(S_{G_2}/J_{G_2})=4$, $\reg(S_{G_3}/J_{G_3})=5$ and $\reg(S_{G_4}/J_{G_4})=5$. Also, we get that $\beta_{10,16}(S_{G_1}/J_{G_1})$ and $\beta_{8,12}(S_{G_2}/J_{G_2})$  are the unique extremal Betti numbers of $S_{G_1}/J_{G_1}$ and $S_{G_2}/J_{G_2}$ . By Proposition \ref{extremal-disconnected}, $\beta_{9,14}(S_{G_3}/J_{G_3})$ is the unique extremal Betti number of $S_{G_3}/J_{G_3}$. By Proposition \ref{non-unique-extremal2}, $\beta_{8,12}(S_{G_4}/J_{G_4})$ is an extremal Betti number of $S_{G_4}/J_{G_4}$, i.e., $S_{G_4}/J_{G_4}$ does not admit a unique extremal Betti number.

			It will be interesting to obtain an answer to:
			\begin{question}
				Characterize unicyclic graphs $G$ such that $S/J_G$ admits a unique extremal Betti number.
			\end{question}

\bibliographystyle{plain}
\bibliography{Reference}

\end{document}